\definecolor{red}{rgb}{1,0,0}
\definecolor{blue}{rgb}{0,0,.7}
\definecolor{green}{rgb}{0,.6,0}
\definecolor{purp}{rgb}{.5,0,.5}
\numberwithin{figure}{section}   
\numberwithin{table}{section}   
\numberwithin{equation}{section}   
\tikzstyle{vertex}=[circle, draw=black, thick, inner sep=0pt, minimum size=6pt]
\tikzstyle{Bvertex}=[circle, black, fill, draw, inner sep=0pt, minimum size=6pt]
\tikzstyle{vtx}=[circle, white, fill, draw=black, thick, inner sep=0pt, minimum size=6pt]
\tikzstyle{gvertex}=[circle, green, fill, draw=black, inner sep=0pt, minimum size=6pt]
\newcommand{\vertex}{\node[vertex]}
\newtheorem{thm}{Theorem}[section]
\newtheorem{cor}[thm]{Corollary}
\newtheorem{lem}[thm]{Lemma}
\newtheorem{prop}[thm]{Proposition}
\newtheorem{obs}[thm]{Observation}
\newtheorem{quest}[thm]{Question}
\theoremstyle{definition}
\newtheorem{rem}[thm]{Remark}
\theoremstyle{definition}
\theoremstyle{definition}
\newtheorem{ex}[thm]{Example}
\newcommand{\ZZ}{\mathbb{Z}}
\newcommand{\Z}{\operatorname{Z}}
\newcommand{\Zp}{\operatorname{Z}_+}
\newcommand{\pd}{\gamma_P}
\newcommand{\F}{{\mathcal F}}
\newcommand{\pt}{\operatorname{pt}}
\newcommand{\ptx}{\operatorname{pt}_Y}
\newcommand{\ptp}{\operatorname{pt}_+}
\newcommand{\ppt}{\operatorname{pt}_{\rm pd}}
\newcommand{\capt}{\operatorname{capt}}
\newcommand{\thx}{\operatorname{th}_Y}
\newcommand{\thz}{\operatorname{th}}
\newcommand{\thp}{\operatorname{th}_+}
\newcommand{\thc}{\operatorname{th}_c}
\newcommand{\thpd}{\operatorname{th}_{\rm pd}}
\newcommand{\thxx}{\operatorname{th}_Y^\times}
\newcommand{\thxa}{\operatorname{th}_Y^\ast}
\newcommand{\thzx}{\operatorname{th}^\times}
\newcommand{\thza}{\operatorname{th}^\ast}
\newcommand{\thpx}{\operatorname{th}_+^\times}
\newcommand{\thpa}{\operatorname{th}_+^\ast}
\newcommand{\thcx}{\operatorname{th}_c^{\x}}
\newcommand{\thca}{\operatorname{th}_c^\ast}
\newcommand{\thpda}{\operatorname{th}_{\rm pd}^\ast}
\newcommand{\thpdx}{\operatorname{th}_{\rm pd}^\times}
\newcommand{\ecc}{\operatorname{ecc}}
\newcommand{\rd}{\operatorname{rd}} 
\newcommand{\kx}{k_Y}
\newcommand{\kp}{k_+}
\newcommand{\dist}{\operatorname{dist}}
\newcommand{\rad}{\operatorname{rad}}
\newcommand{\HH}{\mathcal{H}}
\newcommand{\n}{\{1,\dots,n \}}
\newcommand{\x}{\times}
\newcommand{\bit}{\begin{itemize}}
\newcommand{\eit}{\end{itemize}}
\newcommand{\ben}{\begin{enumerate}}
\newcommand{\een}{\end{enumerate}}
\newcommand{\beq}{\begin{equation}}
\newcommand{\eeq}{\end{equation}}
\newcommand{\bea}{\begin{eqnarray*}} 
\newcommand{\eea}{\end{eqnarray*}}
\newcommand{\bpf}{\begin{proof}}
\newcommand{\epf}{\end{proof}\ms}
\newcommand{\bmt}{\begin{bmatrix}}
\newcommand{\emt}{\end{bmatrix}}
\newcommand{\ms}{\medskip}
\newcommand{\cp}{\, \Box\,}
\newcommand{\lc}{\left\lceil}
\newcommand{\rc}{\right\rceil}
\newcommand{\lf}{\left\lfloor}
\newcommand{\rf}{\right\rfloor}
\newcommand{\lp}{\!\left(}
\newcommand{\rp}{\right)}
\newcommand{\lb}{\left[}
\newcommand{\rb}{\right]}
\newcommand{\ds}{\displaystyle}
\newcommand{\du}{\,\dot{\cup}\,}
\title{Product Throttling\footnote{This is a preprint of the following chapter: Sarah E. Anderson, Karen L. Collins, Daniela Ferrero, Leslie Hogben, Carolyn Mayer, Ann N. Trenk, and Shanise Walker, Product Throttling, published in Research Trends in Graph Theory and Applications, edited by D. Ferrero, L. Hogben, S. Kingan, and G. Mathews, 2021, Springer reproduced with permission of Springer Nature Switzerland AG. The final authenticated version is available online at: \url{https://doi.org/10.1007/978-3-030-77983-2}.}}
\author{Sarah E. Anderson\thanks{Department of Mathematics, University of St. Thomas, St. Paul, MN 55105, USA (ande1298@stthomas.edu).} \and Karen L. Collins\thanks{Department of Mathematics and Computer Science, Wesleyan University, Middletown, CT 06459, USA (kcollins@wesleyan.edu).}\and Daniela Ferrero\thanks{Department of Mathematics, Texas State University, San Marcos, TX 78666, USA (dferrero@txstate.edu).}\and Leslie Hogben\thanks{Department of Mathematics, Iowa State University, Ames, IA 50011, USA and American Institute of Mathematics,  
San Jose, CA 95112, USA (hogben@aimath.org).}\and Carolyn Mayer \thanks{Department of Mathematical Sciences, Worcester Polytechnic Institute, Worcester, MA 01609, USA and Sandia National Laboratories, Albuquerque, NM 87185, USA (cdmayer@sandia.gov).}\and Ann N. Trenk \thanks{Department of Mathematics, Wellesley College, Wellesley, MA 02481, USA (atrenk@wellesley.edu).}\and Shanise Walker\thanks{Department of Mathematics, University of Wisconsin-Eau Claire, Eau Claire, WI 54701, USA (walkersg@uwec.edu).}}
\begin{document}
\maketitle

\section{Introduction} \label{s:intro}

Throttling addresses the question of minimizing the sum or the product of the resources used to accomplish a task  and the time needed to complete that task for various graph searching processes.  Graph parameters of interest include various types of zero forcing, power domination, and Cops and Robbers.  

The resources used to accomplish a task can be blue vertices 
in zero forcing,  Phasor Measurement Units (PMUs) in power domination, or cops in Cops and Robbers.
The time is the number of rounds needed to complete the process (the propagation time or capture time).  

We begin by defining the graph parameters for which we will discuss product throttling. Our focus is on  connected graphs of order at least two (unless otherwise stated). 
Zero forcing is a coloring game on a graph, where the goal is to color all the vertices blue (starting with  each vertex colored blue or white). White vertices are then colored blue by applying a {color change rule}; the type of zero forcing is determined by the color change rule.  Standard zero forcing uses the \emph{standard color change rule}:
\bit
\item[] If $w$ is the unique white neighbor of a blue vertex $v$, then  change the color of  $w$ to blue.  
\eit
Positive semidefinite (PSD) zero forcing uses the \emph{PSD color change rule}:
\bit
\item[] Let $B$ be the set of (currently) blue vertices and let $W_1,\dots, W_k$ be the sets of vertices of the  components of $G-B$.  If $v\in B$, $w\in W_i$, and $w$ is the only white neighbor of  $v$ in $G[W_i\cup B]$, then change the color of $w$ to blue. 
\eit
Note that it is possible that there is only one component of $G-B$, and in that case the effect of the PSD color change rule is the same as that of the standard color change rule.   

A nonempty set $S\subseteq V(G)$ defines an initial set of  blue vertices (with all vertices not in $S$ colored white); this is called an \emph{initial coloring of $G$}. Given an initial  coloring $S$ of  $G$, the   \emph{final coloring}  of $S$ is the set of blue vertices obtained by  applying the color change rule until no more changes are possible (other names for the final coloring include the \emph{derived set}  and the \emph{closure} of $S$).
A set $S$ is a \emph{standard zero forcing set} (respectively, \emph{PSD zero forcing set}) of $G$ if the final coloring of $S$ is $V(G)$  using the standard (respectively, PSD color change rule).    The \emph{standard zero forcing number}  (respectively, \emph{PSD zero forcing number}), denoted by $\Z(G)$ (respectively, $\Zp(G)$) is the minimum cardinality of a standard zero forcing set (respectively, a PSD zero forcing set).   
  Hereafter, we will use the term \emph{forcing set} to mean standard or PSD zero forcing set. 
  
If $v$ is used to change the color of $w$ by a color change rule, we say \emph{$v$ forces $w$} and write $v\to w$.
For a given forcing set $S$, we construct the  final coloring, recording the forces.  Depending on context, the symbol $\F$ is used to denote the \emph{set  of forces} that produces the final coloring, or an ordered list of forces (in the order they were performed), called a \emph{chronological list of forces}.  
For a given  set $S$, there are often choices as to which vertex forces a particular vertex, so a set of  forces, or a chronological list of forces, is usually not unique.  However, the final coloring is unique for standard and  PSD  zero forcing \cite{AIM, HLS21+}. 

We can also approach (standard or PSD) zero forcing not as an individual sequence of forces but via \emph{rounds}, where in each round we perform  all possible forces that can be done independently of each other (rounds are also called \emph{time steps} in the literature).  
Starting with $S\subseteq V(G)$, we define two sequences of sets, the set $S^{(i)}$ of vertices that turn blue in round $i$   and  the set $S^{[i]}$ of vertices that are blue after round $i$. Thus $S^{[0]}=S^{(0)}=S$ is the initial set of blue vertices.  Assume $S^{(i)}$ and $S^{[i]}$ have been constructed and $S^{(i)}\ne \emptyset$. Then
 \[S^{(i+1)}=\{w:\mbox{$w$ can be forced by some $v$ (given  $S^{[i]}$  blue)}\}\ \ \mbox{  and }\ \ S^{[i+1]}=S^{[i]}\cup S^{(i+1)}.\]  
 Let $p$ denote the greatest integer such that  $S^{(p)}\ne \emptyset$. Since $S^{(i)}= \emptyset$ implies $S^{(i+1)}= \emptyset$, $S$ is a  forcing set of $G$ if and only if  $S^{[p]}=V(G)$.  When $S$ is a  forcing set, this  $p$ is called the \emph{propagation time} of $S$ in $G$, denoted $\pt(G;S)$ or $\ptp(G;S)$ for standard and PSD zero forcing set, respectively; if $S$ is not a forcing set, then $\pt(G;S)=\infty$ or $\ptp(G;S)=\infty$.   For $k\in\ZZ^+$, $\pt(G,k)=\min_{|S|=k}\pt(G;S)$ and $\ptp(G,k)=\min_{|S|=k}\ptp(G;S)$.  
 The \emph{standard propagation time of $G$} (respectively, \emph{PSD propagation time of $G$}) is $\pt(G)=\pt(G,\Z(G))$ (respectively, $\ptp(G)=\ptp(G,\Zp(G))$). 

 For each $v \in V(G)$,  define the {\em round  function} by     
 $\rd(v)=k$ for $v\in S^{(k)}$. 
 A  \emph{propagating} set of forces is one in which $\rd(u)<\rd(v)$ implies $u$ is forced before $v$ in the associated chronological list of forces, and this is the only kind of forcing set we are concerned with.
The round function will also be used for power domination and Cops and Robbers, but the meaning will be clear from the context or a subscript will be added to identify the parameter.

The name \emph{zero forcing} comes from the fact that the process describes forcing zeros in the null vector of a symmetric  matrix using only the pattern of off-diagonal nonzero entries of the matrix (a graph describes the nonzero  off-diagonal pattern of a symmetric matrix).  The zero forcing number was introduced in  \cite{AIM} as an upper bound for the maximum nullity, or equivalently, maximum multiplicity of an eigenvalue, among real symmetric matrices having this graph.  Zero forcing
was introduced independently in mathematical physics in the study of control of quantum systems \cite{graphinfect}, and later reintroduced as fast mixed graph searching \cite{Y13}.  Arguably its first appearance was as part of the power domination process, which we describe next.

Power domination models the observations that can be made by PMUs and was studied  using graphs by Haynes et al.~in
\cite{HHHH02}; Brueni and Heath \cite{BH} showed that a simplified version of the propagation rules  is equivalent to the original version in \cite{HHHH02}, and we use their propagation rules. For a nonempty set $S$ of vertices of $G$, $N[S]$ denotes the closed neighborhood of $S$.  A set $S$ is a \emph{dominating set} of $G$ if $N[S]=V(G)$, and the minimum cardinality of a dominating set is the \emph{domination  number} of $G$, denoted by $\gamma(G)$.
Given $S\subseteq V(G)$, define the sequences of sets $P^{(i)}(S)$ and $P^{[i]}(S)$ by the following recursive rules: 
\ben[(1)] 
\item\label{domstep} $P^{[0]}(S)=P^{(0)}(S)=S$, $P^{[1]}(S)=N[S]$ and $P^{(1)}(S)=N[S]\setminus S$.  \vspace{-3pt}
\item\label{zfstep} For $i\ge 1$,\vspace{-3pt}
\bea P^{(i+1)}(S)&=& \big \{w\in V(G)\setminus P^{[i]}(S) : \exists u\in P^{[i]}(S), \,N_G(u)\setminus P^{[i]}(S)=\{w\} \big \},\\ \vspace{-3pt}
P^{[i+1]}(S)&=&P^{[i]}(S)\cup P^{(i+1)}(S).\vspace{-3pt}
\eea
\een\vspace{-6pt}

Step \eqref{domstep} is called the domination step, because it results in $P^{[1]}(S)=N[S]$. Step \eqref{zfstep} is called the zero forcing  step, because  $P^{(i+1)}(S)=N[S]^{(i)}$ for $i\ge 1$.   
For $v\in  P^{(i)}(S)$, we say \emph{$v$ is observed in round $i$} or $\rd(v)=i$ (if necessary to distinguish from zero forcing, we write $\rd_{pd}(v)$). 
If every vertex is observed in some round, i.e., there is an $i$ such that $P^{[i]}(S)=V(G)$,  then $S$ is a \emph{power dominating set} of $G$; $S$ is a power dominating set of $G$ if and only if $N[S]$ is a zero forcing set of $G$. The \emph{power domination number} of   $G$, denoted by $\pd(G)$, is the minimum cardinality of a power dominating set. 
 When $S$ is a power dominating set, the least positive integer $p$ with the property that $P^{[p]}(S)=V(G)$ is the {\it power propagation time} of $S$ in $G$, denoted by $\ppt(G;S)$; if $S$ is not a power dominating set, then $\ppt(G;S)=\infty$. Observe that $\ppt(G;S)=\pt(G;N[S])+1$.
  For $k\in\ZZ^+$, $\ppt(G,k)=\min_{|S|=k}\ppt(G;S)$ and the \emph{power propagation time of $G$} is $\ppt(G)=\ppt(G,\pd(G))$.

 Cops and Robbers is a two-player game played on a graph.  One player places and moves a collection of cops and the other  places and moves a single robber. The goal for the cops is to capture the robber by having a cop occupy the same vertex the robber occupies.  The goal of the robber is to avoid capture.  After an initial placement of the cops on a multiset of vertices (meaning more than one cop can occupy a single vertex), followed by the placement of the robber, the game is played in a sequence of  rounds during which the players take turns, both playing in a single round: The team of cops takes a turn by allowing each cop to move to an adjacent vertex or stay in place. Similarly, the robber takes a turn by moving to an adjacent vertex or staying in place. The cops win the game if after some finite number of rounds, a cop captures the robber.  If the robber has a strategy to evade the cops indefinitely, the robber wins.   The \emph{cop number} $c(G)$ of a graph $G$ is  the minimum number of cops required to capture the robber playing on $G$ \cite{AF84}.  The \emph{capture time}, denoted $\capt(G)$,  is the number of rounds it takes  for $c(G)$ cops to capture the robber on the graph $G$ (assuming all players follow optimal strategies) \cite{BGHK09}, and for any $k \geq c(G)$, the \emph{$k$-capture time} of $G$, denoted by $\capt_k(G),$ is the minimum number of rounds it takes for $k$ cops to capture the robber on $G$ (assuming that all players follow optimal strategies) \cite{BPPR17}.  If $k<c(G)$, then $\capt_k(G)=\infty$. 
 
Throttling originated with a question of Richard Brualdi to Michael Young in a talk about zero forcing and propagation time at the 2011 International Linear Algebra Society Conference in Braunschweig, Germany.
This led Butler and Young to initiate the study of sum throttling  for (standard) zero forcing in \cite{BY13}. 
Sum throttling has been studied for numerous parameters including standard zero forcing, PSD zero forcing, and their minor monotone floors; power domination; Cops and Robbers (see \cite[Chapter 10]{HLS21+} for a survey).  Here we define sum throttling for the four graph games we discuss, i.e., standard zero forcing, PSD zero forcing, power domination, and Cops and Robbers.
 The  \emph{standard throttling number} 
 of $S$ in $G$ is $\thz(G;S)=|S|+\pt(G;S)$, and the \emph{standard $k$-throttling number} is $\thz(G,k)= k +  \pt(G,k)$. 
The    \emph{standard throttling number} of  $G$ is  
\[\thz(G)=\min_{S\subseteq V(G)} \thz(G;S)=\min_{\Z(G)\le k\le n}\thz(G,k). \]  
 The  \emph{PSD throttling number}  of $S$ in $G$ is $\thp(G;S)=|S|+\ptp(G;S)$, and the \emph{PSD $k$-throttling number} is $\thp(G,k)= k +  \ptp(G,k)$. 
The    \emph{PSD throttling number} of  $G$ is  
\[\thp(G)=\min_{S\subseteq V(G)} \thp(G;S)=\min_{\Zp(G)\le k\le n}\thp(G,k). \]  
The  \emph{power domination throttling number}  of $S$ in $G$ is $\thpd(G;S)=|S|+\ppt(G;S)$, and the \emph{power domination $k$-throttling number} is $\thpd(G,k)= k +  \ppt(G,k)$. 
The    \emph{power domination throttling number} of  $G$ is  
\[\thpd(G)=\min_{S\subseteq V(G)} \thpd(G;S)=\min_{\pd(G)\le k\le n}\thpd(G,k). \]  
The  \emph{cop throttling number}  of $S$ in $G$ is $\thc(G;S)=|S|+\capt(G;S)$, and the \emph{cop $k$-throttling number} is $\thc(G,k)= k +  \capt_k(G)$. 
The    \emph{cop throttling number} of  $G$ is  
\[\thc(G)=\min_{S\subseteq V(G)} \thc(G;S)=\min_{c(G)\le k\le n}\thc(G,k). \]  

Product throttling minimizes a product of the number of vertices and the propagation time. In order to make product throttling interesting, the case of a zero product (where each vertex has a cop/PMU/blue color and the propagation time is zero) must be excluded.  This can be done by requiring that the cost of positioning cops/PMUs/blue vertices  be considered (e.g., by adding one to the number of rounds before multiplying by the number of vertices used), and  we describe this as {\em product throttling with initial cost}. 
Alternatively, a requirement that at least one round be performed must be added  or the initial set consisting of all vertices must be excluded.  
The study of product throttling  was initiated in  \cite{cop-throttle2} for Cops and Robbers. It was assumed there is a time cost to placing cops and the product throttling number was defined as the the number of cops times one more than the propagation time.  
 In contrast, PMUs remain in place but it is natural to assume that the domination step always occurs. In the study of product throttling for power domination in \cite{product-power-throt}, at least one round was required, and the product throttling number was defined as the product of the number of PMUs and the power propagation time. 
Formal versions of these definitions are given in Section \ref{s:prod-CR} for Cops and Robbers and Section \ref{s:prod-power-a} for power domination. 

Requiring at least one round and excluding the zero round case are effectively the same for connected graphs of order at least two, but it is more convenient to exclude the zero round case by requiring that the number of vertices used is less than the order; 
 this avoids having two different definitions of propagation time. We refer to this as {\em product throttling with no initial cost}.  This is the approach taken   in Section \ref{s:prod-univ}, where   formal versions of the two definitions in  universal notation  are presented  and discussed further, and in subsequent sections.   
Product throttling for the standard zero forcing number  is not interesting for the first definition and not really a throttling question for the second definition (see Section \ref{s:prod-Z}). This fact (or at least the need to exclude the zero solution and that first definition  results in product throttling number $n$ for every graph of order $n$) may have delayed the introduction of product throttling. Using the universal perspective, we examine both definitions of product throttling for Cops and Robbers in Section \ref{s:prod-CR-a}, for power domination in Section \ref{s:prod-power-2}, and  for PSD zero forcing in   Section \ref{s:prod-Z-PSD}.   Section \ref{s:compare} compares product throttling for Cops and Robbers, power domination, and PSD zero forcing.

We need some additional notation.  The path, cycle, and complete graph on $n$ vertices are denoted by $P_n$,  $C_n$, and  $K_n$, respectively, and $K_{r,n-r}$ denotes a complete bipartite graph. For a graph $G$, $\alpha(G)$ denotes the independence number of $G$.
For all the graph parameters discussed, the number of rounds is at least the maximum distance of any vertex to the initial set $S$, and this plays an important role in the analysis of throttling. Let $S$ be a set of vertices of $G$.  For $v\in V(G)$, the \emph{distance} from $v$ to $S$ is $\dist(S,v)=\min_{x\in S}\dist(x,v)$. The {\em eccentricity}    of $S$ is defined by $\ecc(S)=\max_{v\not\in S}\dist(S,v).$ The {$k$-radius} of $G$ is $\rad_k(G)=\min_{S\subseteq V(G), |S|=k}\ecc(S).$  


\section{Initial cost product throttling for Cops and Robbers}
\label{s:prod-CR}
In this section,  we follow the convention in \cite{cop-throttle2} and do not assume $G$ is connected or that its  order $n$ is  at least two unless stated otherwise.  The \emph{product cop throttling number with initial cost} defined in   \cite{cop-throttle2} is
\[\ds \thcx(G)=  \min_{c(G)\le k\le n}\{ k (1+ \capt_k(G))\} \]
or equivalently,  $\thcx(G)=\min_{S\subseteq V(G)}\thcx(G;S)$ 
where $\thcx(G;S)=  |S| (1+ \capt(G;S))$.
This choice of definition for Cops and Robbers reflects the fact that there is a time cost to getting the cops in position.  In this section we summarize results from {\rm \cite{cop-throttle2}}.

\begin{obs} \label{o:thcx-bds}{\rm \cite{cop-throttle2}}
There are two immediate upper bounds:
\ben[$(1)$]
\item $\thcx(G)\le c(G)(1+\capt(G))$.
\item $\thcx(G)\le 2\gamma(G)$.
\een
\end{obs}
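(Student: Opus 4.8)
The plan is to establish both inequalities by exhibiting, for each, a single admissible choice of $k$ (equivalently, a single cop placement) that realizes the claimed bound inside the minimum defining $\thcx(G)$. Since $\thcx(G)=\min_{c(G)\le k\le n}\{k(1+\capt_k(G))\}$, any admissible $k$ together with an upper bound on $\capt_k(G)$ immediately yields an upper bound on $\thcx(G)$. So the whole argument reduces to choosing $k$ well and bounding the corresponding capture time.

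For part $(1)$ I would take $k=c(G)$, the smallest admissible value. By the very definition of the capture time, $\capt_{c(G)}(G)=\capt(G)$, so $\thcx(G)\le c(G)(1+\capt_{c(G)}(G))=c(G)(1+\capt(G))$, as claimed.

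For part $(2)$ the idea is to use a minimum dominating set as the initial cop placement. First I would note that $\gamma(G)$ is an admissible number of cops, i.e.\ $c(G)\le\gamma(G)$, since placing a cop on each vertex of a dominating set captures the robber and hence the cop number cannot exceed $\gamma(G)$. The key step is to verify that this placement captures the robber in at most one round. Let $D$ be a minimum dominating set and place the $\gamma(G)$ cops on $D$. After the robber is placed at some vertex $r$, either $r\in D$ (capture at round $0$) or, since $D$ is dominating, $r$ is adjacent to some vertex of $D$ occupied by a cop; as the cops move first in each round, that cop steps onto $r$ in round $1$ and captures the robber. Hence $\capt_{\gamma(G)}(G)\le 1$, and substituting $k=\gamma(G)$ gives $\thcx(G)\le\gamma(G)(1+1)=2\gamma(G)$.

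I do not expect a substantial obstacle here: both bounds follow directly from the definition once the right value of $k$ is selected, which is why the statement is phrased as an observation. The only point requiring genuine care is the order of play in part $(2)$, where one must invoke the convention that within a round the cops move before the robber, so that a cop adjacent to the robber can occupy the robber's vertex before the robber has a chance to flee.
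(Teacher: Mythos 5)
Your proposal is correct and is essentially the paper's own argument: both bounds are obtained by substituting the admissible values $k=c(G)$ and $k=\gamma(G)$ into the minimum defining $\thcx(G)$, using $\capt_{c(G)}(G)=\capt(G)$ for the first and the fact that cops on a dominating set capture the robber in at most one round (hence $\capt_{\gamma(G)}(G)\le 1$, and $c(G)\le\gamma(G)$ ensures admissibility) for the second. Your care about the move order within a round is exactly the right point to check, and it matches the paper's convention that the cops move before the robber.
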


\begin{rem}\label{r:cr-sum=prod}\cite{cop-throttle2}
Let $G$ be a graph of order $n$  
and suppose $S\subseteq V(G)$ with $|S|\ge c(G)$.  Then  
\[\thcx(G;S)=|S|(1+ \capt(G;S))=|S|+|S|\capt(G;S)\ge |S|+\capt(G;S)=\thc(G;S),\] so $  \thcx(G)\ge \thc(G)$.  Equality occurs exactly when $|S|=1$ or $\capt(G;S)=0$, so $\thcx(G)=\thc(G)$ if and only if $\thc(G)=\thc(G;S)$ with $|S|=1$ or $|S|=n$, i.e., when the cop throttling number can be realized with a single cop or a cop
on every vertex. 
\end{rem}

The product cop throttling number can be determined for low values by factoring the proposed value.

\begin{prop}\label{p:thcx-low}{\rm\cite{cop-throttle2}}  Let $G$ be a graph  of order $n$.
\ben[$(1)$]
	\item\label{thcx1} $\thcx(G)=1$ if and only if  $ \thc(G)=1$ if and only if $G=K_1$.  
	\item\label{thcx2} $\thcx(G)=2$ if and only if $ \thc(G)=2$ if and only if either $G=2K_1$ or $\gamma(G)=1$.  
	\item\label{thcx3} $\thcx(G)=3$ if and only if   
	 	$G$ satisfies at least one of the following conditions:
		\begin{enumerate}[(a)]
			\item \label{30} $G=3K_1$ or $G=K_1\du K_2$.
			\item \label{12} $\gamma(G)\ge 2$ and there exists $z\in V(G)$ such that
 				\ben[(i)]
 					\item \label{12a} for all $v\in V(G)$, $\dist(z,v)\le 2$, and 
					 \item \label{12b} for all $w\in V(G)\setminus N[z]$, there is a vertex $u\in N[z]$ such that $N[w]\subset N[u]$.   
				\een
		\end{enumerate}

\item\label{thcx4} $\thcx(G)=4$ if and only if $G$ satisfies at least one of the following conditions:
		\ben[(a)]
			\item $G=4K_1$ or $G=2K_1\du K_2$. 
			\item $\gamma(G)= 2$ and $\capt_1(G)\ge 3$.
			\item $c(G)=1$ and $\capt(G)=3$.
		\een
\een
\end{prop}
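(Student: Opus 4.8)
The plan is to exploit the multiplicative structure of $\thcx(G)=\min_{c(G)\le k\le n} k\,(1+\capt_k(G))$. Since each admissible pair contributes a product of two positive integers, to prove $\thcx(G)=m$ I would factor $m$ and determine, for every factorization $m=k\,(1+t)$ with $t\ge 0$, exactly which graphs are realized by $k$ cops with $\capt_k(G)=t$, and then certify minimality so that no factorization of a smaller value is available. The upper bounds $\thcx(G)\le c(G)(1+\capt(G))$ and $\thcx(G)\le 2\gamma(G)$ from Observation \ref{o:thcx-bds}, together with the inequality $\thcx(G;S)\ge\thc(G;S)$ from Remark \ref{r:cr-sum=prod} (and the resulting description of when $\thcx=\thc$), will do most of the certification of minimality and will also yield the equivalences with $\thc(G)=m$ asserted for $m\le 2$.

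The argument rests on three building-block facts about $\capt_k$. First, $\capt_k(G)=0$ if and only if $k=n$: immediate capture forces a cop on every vertex, and conversely $k<n$ leaves the robber a free vertex at placement. This identifies every factorization of the form $m=m\cdot 1$ with the order-$m$ graphs for which a cop on each vertex is optimal, and after discarding graphs with a cheaper option it produces the discrete lists $K_1$; $2K_1$; $\{3K_1,\,K_1\du K_2\}$; and $\{4K_1,\,2K_1\du K_2\}$ in parts (1)--(4). Second, $\capt_k(G)\le 1$ if and only if $k$ cops can occupy a dominating set, i.e. $\gamma(G)\le k$; hence $\capt_k(G)=1$ (with $k<n$) is governed by the domination number, which is why $2=1\cdot 2$ corresponds to $\gamma(G)=1$ and $4=2\cdot 2$ corresponds to $\gamma(G)=2$. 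Here I would also use $c(G)\le\gamma(G)$, so that $\gamma(G)=2$ forces $c(G)\in\{1,2\}$ and both $k=1$ and $k=2$ are available for comparison. Third, for a single cop I would analyze the capture time directly: $\capt_1(G)=1$ is again $\gamma(G)=1$, while $\capt_1(G)=2$ is exactly the structural condition of part (3)(b), namely that some vertex $z$ has eccentricity at most $2$ and every $w\notin N[z]$ satisfies $N[w]\subseteq N[u]$ for some $u\in N[z]$.

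With these in hand, each value is assembled by running through its factorizations. For $m=1$ only $1\cdot 1$ occurs, forcing $G=K_1$. For $m=2$ the factorizations $1\cdot 2$ and $2\cdot 1$ give $\gamma(G)=1$ and $G=2K_1$. For $m=3$ the prime factorizations $1\cdot 3$ and $3\cdot 1$ give, respectively, the single-cop two-round condition of part (3)(b) (with $\gamma(G)\ge 2$ to rule out the cheaper $\gamma=1$ case) and the order-$3$ graphs $3K_1,\,K_1\du K_2$. For $m=4$ the three factorizations $1\cdot 4$, $2\cdot 2$, $4\cdot 1$ give part (c) ($c(G)=1$, $\capt(G)=3$), part (b) (the $\gamma(G)=2$ case, where $\capt_1(G)\ge 3$ records that the one-cop option does not undercut $4$), and part (a) (order $4$, all cops). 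Throughout, minimality is verified by checking that no factorization of $1,2,3$ is available: the key inputs are that $\gamma(G)\ge 2$ blocks the value-$2$ and value-$1$ single-cop options, that $\capt_1(G)\ge 3$ blocks the value-$3$ single-cop option, and that $n\ge 4$ blocks the cheaper all-cops option in cases (b) and (c).

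I expect two obstacles. The main one is the equivalence $\capt_1(G)=2\iff$ part (3)(b): the forward direction needs a careful description of the cop's two-move strategy from the eccentricity-$2$ vertex $z$, and the reverse direction is a cornering argument showing that the hypothesis $N[w]\subseteq N[u]$ lets the cop step to $u$ and trap the robber, which is the cop-win dominated-vertex mechanism specialized to two rounds; matching the quantifiers in (3)(b) to the game exactly is the delicate point. The secondary obstacle is bookkeeping the disconnected and low-order graphs in the $\capt_k=0$ factorizations and confirming minimality there, in particular distinguishing graphs such as $2K_2$ or $C_4$ (value $4$) from $K_1\du K_2$ (value $3$), where the availability of a cheap all-cops placement turns on the exact order $n$ rather than on $\gamma(G)$ or $c(G)$ alone.
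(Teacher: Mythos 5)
Your overall strategy is exactly the one the paper points to: the survey states this proposition without proof (it is quoted from \cite{cop-throttle2}), prefaced only by the remark that low values can be determined ``by factoring the proposed value,'' which is precisely your plan. Your three building blocks are sound: $\capt_k(G)=0$ iff $k=n$; $\capt_k(G)\le 1$ iff $\gamma(G)\le k$ (a robber placed outside $N[S]$ cannot be reached in one cop move); and the equivalence of $\capt_1(G)=2$ with the structure in (3)(b), where both your two-round strategy (move to $u$ with $N[w]\subseteq N[u]$, then capture wherever the robber steps) and the converse quantifier-matching argument (the cop's first move $u\in N[z]$ must satisfy $N[w]\subseteq N[u]$, else the robber survives round two; properness of the inclusion is automatic because $z\in N[u]\setminus N[w]$) are correct. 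Parts (1)--(3) assemble exactly as you describe.

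The genuine gap is in your minimality bookkeeping for part (4)(b). You assert that ``$n\ge 4$ blocks the cheaper all-cops option in cases (b) and (c).'' For (c) this is fine: $c(G)=1$ forces $G$ connected, and every connected graph of order at most three has $\gamma(G)=1$, hence $\capt(G)\le 1\ne 3$. But for (b), $n\ge 4$ does \emph{not} follow from $\gamma(G)=2$ and $\capt_1(G)\ge 3$: the graphs $2K_1$ and $K_1\du K_2$ have $\gamma(G)=2$ and, under the paper's convention that $\capt_k(G)=\infty$ for $k<c(G)$, satisfy $\capt_1(G)=\infty\ge 3$, yet $\thcx(2K_1)=2$ and $\thcx(K_1\du K_2)=3$, both realized by the all-cops placement. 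So the ``if'' direction of part (4) cannot be proved the way you argue; these two graphs (the only graphs with $\gamma(G)=2$ and $n\le 3$) must be dealt with explicitly. Note also that the alternative reading of (b), in which $\capt_1(G)\ge 3$ presupposes $\capt_1(G)<\infty$ and hence $c(G)=1$, repairs the ``if'' direction but breaks your ``only if'' direction, since disconnected graphs such as $2K_2$ or $K_1\du P_3$ have $\thcx(G)=4$ (two cops on a dominating set) while satisfying none of (a), (b), (c). Under either reading, your proof of part (4) needs an explicit treatment of the small and disconnected cases --- exactly the ``secondary obstacle'' you flagged but did not resolve.
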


It is immediate that $\thcx(K_n)=2$ and $\thcx(K_{1,n-1})=2$ for $n\ge 2$.

A graph $G$ is a {\em chordal graph} if it has no induced cycle of length greater than 3. The next result is less elementary than the previous ones.
\begin{thm}\label{t:thcx-chord} {\rm \cite{cop-throttle2}} Let $H$ be a chordal graph.  Then $\capt_k(H)=\rad_k(H)$.
Furthermore, \[\thcx(H)=1+\rad(H)=c(H)+\capt(H).\] 
\end{thm}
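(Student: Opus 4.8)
The plan is to derive the ``furthermore'' from the first assertion $\capt_k(H)=\rad_k(H)$, so I would prove that identity first and then read off the two remaining facts. Assume $H$ is connected, so that $\rad(H)$ is finite (for disconnected $H$ one argues componentwise, with the convention $\rad_k(H)=\infty$, and likewise $\capt_k(H)=\infty$, when $k$ is less than the number of components). A connected chordal graph is dismantlable --- repeatedly deleting a simplicial vertex, which is always a corner --- hence cop-win, so $c(H)=1$. Granting $\capt_k(H)=\rad_k(H)$, taking $k=1$ (and $\rad_1=\rad$) gives $\capt(H)=\capt_1(H)=\rad(H)$, whence $c(H)+\capt(H)=1+\rad(H)$. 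For the throttling number, Observation~\ref{o:thcx-bds}(1) gives $\thcx(H)\le c(H)(1+\capt(H))=1+\rad(H)$, and since $\thcx(H)=\min_{1\le k\le n}k\,(1+\rad_k(H))$ by the first assertion, the matching lower bound is exactly the metric inequality
\[\rad(H)\ \le\ k\,\rad_k(H)+(k-1)\qquad\text{for all }k,\]
with equality at $k=1$.

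I would dispose of this inequality for an arbitrary connected graph; it is elementary and not the difficulty. Put $r=\rad_k(H)$ and fix $S=\{s_1,\dots,s_k\}$ with $\ecc(S)=r$, so the balls $N_r[s_i]$ cover $V(H)$. Form the auxiliary graph $\Gamma$ on $S$ with $s_i\sim s_j$ whenever $\dist(s_i,s_j)\le 2r+1$; connectivity of $H$ forces $\Gamma$ to be connected, since any edge of $H$ joining two balls witnesses such a pair, and a cut of $\Gamma$ would leave the corresponding unions of balls with no edge between them. Replacing the edges of a spanning tree of $\Gamma$ by geodesics of $H$ yields a connected subgraph meeting every $s_i$ and of total length at most $(k-1)(2r+1)$; choosing $u$ near the middle of this subgraph gives $\dist(u,s_i)\le\lceil(k-1)(2r+1)/2\rceil$ for all $i$, and since every vertex lies within $r$ of some $s_i$, a short computation yields $\ecc(u)\le kr+(k-1)$. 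Thus $\rad(H)\le kr+(k-1)$, as required.

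The heart of the statement is $\capt_k(H)=\rad_k(H)$. The bound $\capt_k(H)\ge\rad_k(H)$ holds for every graph: once the cops occupy a multiset with support $S$ ($|S|\le k$), the robber sits at a vertex $w$ with $\dist(S,w)=\ecc(S)\ge\rad_k(H)$, and as each cop advances at most one step per round, the earliest a cop can reach $w$ is round $\ecc(S)$; minimizing over placements gives the bound. The reverse inequality $\capt_k(H)\le\rad_k(H)$ is where chordality enters and is the step I expect to be the main obstacle. I would place the cops on a set $S$ realizing $\ecc(S)=\rad_k(H)=:r$ and try to maintain the invariant that after $t$ rounds the distance from the cops to the robber is at most $r-t$, forcing capture by round $r$. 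The naive ``nearest cop walks a geodesic toward the robber'' does not preserve this by itself --- on a path started from a bad vertex the robber keeps the distance constant by fleeing --- so one must use both the minimum-eccentricity placement and the global structure: when the robber flees the pursuing cop it moves deeper into another cop's territory, and the claim to establish is that some cop can then close the gap by one. I would formalize this using the clique-tree (equivalently, dismantling) structure of $H$, which provides a tree-like metric that tightly constrains the robber's movement relative to a cover of eccentricity $r$ and should guarantee that the robber cannot increase its distance to all cops simultaneously. Proving that the invariant survives each round --- equivalently, exhibiting a coordinated cop move that beats every robber response --- is the crux and the place I would expect to spend the most effort.
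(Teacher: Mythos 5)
Your reduction of the ``furthermore'' clause to the identity $\capt_k(H)=\rad_k(H)$ is correct and complete: connected chordal graphs are dismantlable via simplicial vertices, so $c(H)=1$; Observation \ref{o:thcx-bds}(1) gives $\thcx(H)\le c(H)(1+\capt(H))=1+\rad(H)$; and your covering inequality $\rad(H)\le k\,\rad_k(H)+(k-1)$ (the ball-chaining argument is valid, and it is a nice, elementary way to get the matching lower bound $k(1+\rad_k(H))\ge 1+\rad(H)$) then pins down $\thcx(H)=1+\rad(H)=c(H)+\capt(H)$. The general lower bound $\capt_k(H)\ge\rad_k(H)$ (the robber sits at a vertex realizing $\ecc(S)$ and waits) is also fine. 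Note that the survey states this theorem as a citation to \cite{cop-throttle2} and reproduces no proof, so your argument has to stand on its own --- and it does not, because the inequality $\capt_k(H)\le\rad_k(H)$ for chordal $H$ is never established. That inequality is the only place chordality is used; it \emph{is} the theorem. What you give instead is a plan plus the admission that verifying it is ``the crux,'' which is exactly a missing proof of the main claim.

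Moreover, the invariant you propose (``after $t$ rounds the distance from the cops to the robber is at most $r-t$'') is the right target but is not maintainable in the naive form you describe; the correct formulation, which does work verbatim for trees, is that each cop $j$ pursues so as to remain on a geodesic between its \emph{own start vertex} $s_j$ and the robber, which forces $\dist(c_j^t,w_t)\le\dist(s_j,w_t)-t$; since at every time the robber lies within $r=\ecc(S)$ of some $s_j$, that particular cop has closed to within $r-t$, and capture occurs by round $r$. In a tree this bookkeeping is automatic because geodesics are unique and every robber move changes $\dist(s_j,\cdot)$ by exactly one (this is essentially how $\capt_k(T)=\rad_k(T)$ is proved for trees in \cite{cop-throttle}). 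In a chordal graph the robber can make ``sideways'' moves that preserve its distance to $s_j$, and then cop $j$ must strictly gain a step while staying on a geodesic from $s_j$; guaranteeing such a move exists requires an actual structural lemma about chordal graphs (for instance via the triangle condition chordal graphs satisfy, or a clique-tree/retraction argument as in \cite{cop-throttle2}). Until that lemma is stated and proved, your proposal establishes the ``furthermore'' statement only conditionally on the first assertion, and the first assertion remains open.
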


From Theorem \ref{t:thcx-chord}, $\thcx(P_n)=1+\lc\frac{n-1}2\rc$, and more generally, $\thcx(T)=1+\rad(T)$ for any  tree $T$.

Theorem \ref{t:thcx-chord} provides many examples of graphs $G$ with $\thcx(G)= c(G)(1+\capt(G))$, thus achieving equality in the first upper bound in Observation \ref{o:thcx-bds}.  
It can also be the case that  $\thcx(G)$ is realized by small capture time and  a larger number of cops, e.g.,  by using $\gamma(G)$ cops.  One example of this is provided by a graph in the family $H(n)$ defined in \cite{BGHK09}; it is shown there that $c(H(n))=1$ and $\capt(H(n))=n-4$.  It as observed in  \cite{cop-throttle2} that for $H(11)$ (see Figure \ref{f:H11}), $\capt(H(11))=7$, but vertices 5 and 7 dominate the graph, so  $\thcx(H(11))=2(1+1)=4$.  

\begin{figure}[h]
\centering
\begin{tikzpicture}[scale=.7]
\vertex (v1) at (1,0)[label=below: \text{\footnotesize$1$}] {};
\vertex (v2) at (-1,0) [label=below:\text{\footnotesize$2$}]{};
\vertex (v3) at (0,-1) [label=right:\text{\footnotesize$3$}]{};
\vertex (v4) at (2,0)[label=below:\text{\footnotesize$4$}]{};
\vertex (v5) at (0,1)[label=right:\text{\footnotesize$5$}]{};
\vertex (v6) at (-2,0)[label=below:\text{\footnotesize$6$}]{};
\vertex (v7) at (0,-2)[label=right:\text{\footnotesize$7$}]{};
\vertex (v8) at (3,0)[label=below:\text{\footnotesize$8$}] {};
\vertex (v9) at (0,2)[label=above:\text{\footnotesize$9$}]{};
\vertex (v10) at (-3,0)[label=below:\text{\footnotesize$10$}]{};
\vertex (v11) at (0,-3)[label=below:\text{\footnotesize$11$}]{};
\foreach \n in {2,3,4,5}{ \draw[thick] (v1) to (v\n); }
\foreach \n in {3,5,6}{ \draw[thick] (v2) to (v\n); }
\foreach \n in {3,5,7,8}{ \draw[thick] (v4) to (v\n); }
\draw[thick] (v5) to (v9);
\draw[thick] (v7) to (v3);
\draw[thick] (v7) to (v11);
\foreach \n in {9,5,3,7,10}{ \draw[thick] (v6) to (v\n); }
\foreach \n in {5,7,9,11}{ \draw[thick] (v8) to (v\n); }
\foreach \n in {7,9,11}{ \draw[thick] (v10) to (v\n); }
\end{tikzpicture}
\caption{The graph $H(11)$.  \label{f:H11}}
\end{figure}
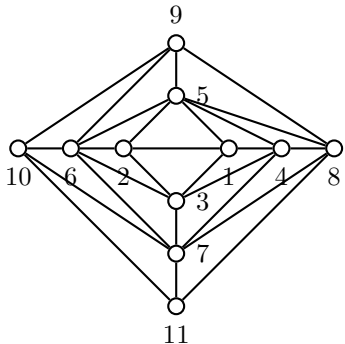

However, the next example provides a family of graphs $G$ for which both $\thcx(G,c(G))>\thcx(G)$ and $\thcx(G,\gamma(G))>\thcx(G)$ for sufficiently large order.
Fix a positive integer $r$, define $M'(r)$ to be the graph that is the union of $C_4$ and three disjoint copies of $P_{r+1}$ where one of the end points of each of the paths is on a distinct vertex of
$C_4$,  and define  $M(r)=M'(r)\circ K_1$.   The graph $M(3)$ is shown in Figure \ref{fig:L3}; the order of $M(r)$ is $6r+8$.

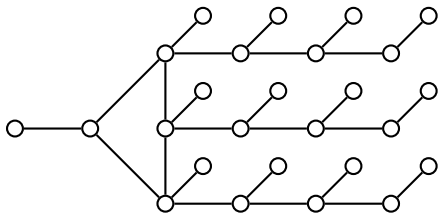
\begin{figure}[h]
\centering
\begin{tikzpicture}[scale=.5]

 \vertex (A) at (-2,4) {};
 \vertex (B) at (0,4) {};
\foreach \row in {1,2,3}{
     \foreach \col in {1,2,3,4}{
           \vertex (c\col_r\row) at (2*\col,2*\row) {};
           \vertex (c\col_r\row_2) at (2*\col+1,2*\row+1) {};
           \draw[thick] (c\col_r\row) to (c\col_r\row_2);
          }
     \draw[thick] (c1_r\row) to (c2_r\row);
     \draw[thick] (c2_r\row) to (c3_r\row);
     \draw[thick] (c3_r\row) to (c4_r\row);
      }
 \draw[thick] (c1_r1) to (c1_r2);
 \draw[thick] (c1_r2) to (c1_r3);
 \draw[thick] (A) to (B);
 \draw[thick] (B) to (c1_r1);
 \draw[thick] (B) to (c1_r3);
\end{tikzpicture}\caption{The graph $M(3)$.  \vspace{-8pt} \label{fig:L3}}
\end{figure}

\begin{prop}{\rm \cite{cop-throttle2}} .    
For $r\ge 7$, $\thcx(M(r))<c(M(r))(1+\capt(M(r)))$ and $\thcx(M(r))<2\gamma(M(r))$.
\end{prop}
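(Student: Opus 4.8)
The plan is to determine the three quantities $\gamma(M(r))$, $c(M(r))$, and $\capt(M(r))=\capt_2(M(r))$, and then to beat \emph{both} upper bounds of Observation~\ref{o:thcx-bds} simultaneously by using the single intermediate value $k=3$.

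First I would record the two easy parameters. Since $M(r)=M'(r)\circ K_1$ is a corona, every vertex of $M'(r)$ carries a private pendant, so a dominating set must contain, for each vertex of $M'(r)$, that vertex or its pendant; hence $\gamma(M(r))=|V(M'(r))|=3r+4$ and $2\gamma(M(r))=6r+8$. For the cop number, label the central $C_4$ as $w\,v_1\,v_2\,v_3$ with the three arms attached at $v_1,v_2,v_3$. Two cops win (one holds the arm-base $v_2$, which confines the robber either to that arm or to the tree obtained by deleting $v_2$, whereupon the second cop captures it), while $M(r)$ is not cop-win: deleting the pendants (each dominated by its neighbor) and then peeling off the path leaves reduces $M(r)$ to the induced $4$-cycle, which has no dominated vertex. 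Thus $c(M(r))=2$ and $\capt(M(r))=\capt_2(M(r))$.

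Next comes the witness $k=3$, bounding $\thcx(M(r))\le\thcx(M(r),3)=3\bigl(1+\capt_3(M(r))\bigr)$. I would place one cop near the midpoint of each arm, at arm-distance about $(r-1)/2$ from the cycle. These three cops partition $M(r)$: deleting them separates the three outer arm-segments from a central region consisting of the inner segments together with $C_4$. If the robber sits in the outer segment of some arm, that arm's cop sweeps outward and catches it in about $r/2$ rounds, the other two cops staying put so the robber cannot leave the arm. If the robber sits in the central region, all three cops descend simultaneously toward the cycle; since the robber can never pass a descending cop, its territory shrinks onto $C_4$, where it is caught in $O(1)$ further rounds. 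Balancing the two cases gives $\capt_3(M(r))\le \tfrac r2+O(1)$, hence $\thcx(M(r))\le \tfrac{3r}2+O(1)$.

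Finally I would bound the right-hand sides from below. By the fact recorded in Section~\ref{s:intro} that the number of rounds is at least the eccentricity of the initial cop set, $\capt_2(M(r))\ge\rad_2(M(r))$. Let $q_1,q_2,q_3$ be the three deepest pendants (at the arm tips); their pairwise distances are at least $2r+3$, so for any two cops one of them is the nearer cop to two of the $q_i$, which forces the eccentricity to be at least $r+2$. Hence $\rad_2(M(r))\ge r+2$ and $c(M(r))\bigl(1+\capt(M(r))\bigr)=2\bigl(1+\capt_2(M(r))\bigr)\ge 2r+6$. Comparing, $\tfrac{3r}2+O(1)<2r+6$ and $\tfrac{3r}2+O(1)<6r+8$ once $r$ is large enough, giving both strict inequalities; tracking the exact additive constants of $\capt_2$ and $\capt_3$ through this comparison is what pins the threshold at precisely $r\ge 7$.

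The main obstacle is the three-cop capture-time estimate: the real danger is that the robber exploits $C_4$ to hop between arms, and turning the descend-and-squeeze idea into a proof---verifying the invariant that a descending cop is never bypassed and that the final mop-up on $C_4$ costs only a bounded number of rounds---is the one genuinely game-theoretic step. Getting the additive constants sharp enough (through the choice of the cops' initial depth and the lower bound on $\capt_2$) to land exactly on $r\ge 7$, rather than a weaker threshold, also requires some care.
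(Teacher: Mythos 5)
Your proposal cannot be checked against an in-paper argument: this survey states the proposition as a quoted result from \cite{cop-throttle2} and includes no proof of it, so I evaluate your plan on its own terms, and on those terms it is sound. All four quantitative ingredients check out: the corona/private-pendant argument gives $\gamma(M(r))=3r+4$; the dismantling argument (pendants, then arm vertices, leaving the corner-free $C_4$) together with the stationary-cop-at-an-arm-base strategy gives $c(M(r))=2$, hence $\capt(M(r))=\capt_2(M(r))$; and the pigeonhole-plus-triangle-inequality argument on the three tip pendants (pairwise distances at least $2r+3$) gives $\capt_2(M(r))\ge \rad_2(M(r))\ge r+2$, so $c(M(r))(1+\capt(M(r)))\ge 2r+6$. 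The two steps you flag as genuine obstacles do go through: a cop descending a caterpillar spine can never be bypassed, since the pendants are dead ends (a robber hiding on a pendant in the cop's path is captured within two extra cop moves, which ends the game), and the mop-up after the three cops reach the attachment vertices $v_1,v_2,v_3$ costs at most two further rounds, because the only vertex of the robber's remaining territory not adjacent to a cop is the pendant of the fourth cycle vertex $w$. Carrying out the constant tracking you defer: with cops at depth $d=\lceil (r-1)/2\rceil$ on each arm, the two cases give $\capt_3(M(r))\le \max\{r-d+1,\, d+2\}=\lceil (r+3)/2\rceil$, so $\thcx(M(r))\le 3\bigl(1+\lceil (r+3)/2\rceil\bigr)$, which equals $(3r+15)/2$ for odd $r$ and $(3r+18)/2$ for even $r$. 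Comparing with the lower bound $2r+6$ requires $r>3$ in the odd case and $r>6$ in the even case, so both strict inequalities hold for every $r\ge 7$ (the comparison with $2\gamma(M(r))=6r+8$ holds for all $r\ge 1$). Thus the threshold $r\ge 7$ emerges exactly from your plan, with the even case binding; the only shortfall of the write-up is that this arithmetic is asserted rather than performed.
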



\section{\large Product throttling for power domination with no initial cost}\label{s:prod-power-a}

Let $S$ be a power dominating set. In the papers that studied power propagation time and in this chapter,   the {power propagation time} of $S$ in $G$ is defined to be the least nonnegative integer $p$ such that $P^{[p]}(S)=V(G)$ and is denoted by $\ppt(G;S)$. Thus $\ppt(G;V(G))= 0$.  When product throttling for power domination was introduced in \cite{product-power-throt}, the perspective was that the domination step always takes place, so the power propagation time of $S$ is at least one even if $S=V(G)$.  That is, the definition of power propagation time was modified to require $p$  to be positive. 
Observe that $\ppt(G;S)\ge 1$  for all $S\ne V(G)$.  For power domination (and all other parameters discussed), it is immediate that $\ppt(G,n-1)=1$ when $G$ is a connected graph and has order $n\ge 2$.  Thus the restriction that $\ppt(G,k)$ be positive can be achieved by  allowing the value zero for  $\ppt(G,k)$, excluding $k=|V(G)|$ from the definition of product power throttling, and requiring that $G$ be connected  and have order at least two.

For a connected graph $G$ of order $n\ge 2$, the \emph{product power throttling number with no initial cost}\footnote{Here `no initial cost' refers to the act of monitoring, not the obvious initial cost of PMU placement.} is
$ \thpda(G)=  \min_{1\le k < n} k  \ppt(G,k); $
equivalently, $\thpda(G)=\min_{S\subsetneq V(G)}\thpda(G;S)$ 
where $\thpda(G;S)=  |S|  \ppt(G;S)$ for $S\ne V(G)$.  Note that most results in \cite{product-power-throt} are for connected graphs but do not assume order at least two, but here we assume the order is at least two. Note that what we here denote by $\thpda(G)$ is denoted by $\thpdx(G)$ in \cite{product-power-throt}.
It is immediate that
\[\ds \thpda(G)=  \min_{\pd(G)\le k \le\gamma(G)} k  \ppt(G,k). \]

In this section, we summarize results from~\cite{product-power-throt}. 
The next observation gives lower and upper bounds for $\thpda(G)$. 
\begin{obs}\label{o:basic-bds-up}\label{o:basic-bds} {\rm \cite{product-power-throt}} Let $G$ be a connected graph of order at least two. Then  
\ben[$(1)$]
\item $\thpda(G)\ge \pd(G)\ge 1$. 
\item $\thpda(G)\leq \gamma(G)$.
\item $\thpda(G)\leq \pd(G)\ppt(G)$.
\een
\end{obs}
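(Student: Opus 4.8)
The plan is to read all three inequalities directly off the set form of the definition, $\thpda(G)=\min_{S\subsetneq V(G)}|S|\,\ppt(G;S)$, where the minimum is effectively taken over power dominating sets $S$ that are proper subsets of $V(G)$ (any $S$ that is not power dominating contributes $\ppt(G;S)=\infty$ and so never attains the minimum). Throughout I will use the two facts recorded just before the statement: a set is power dominating exactly when its power propagation time is finite, and $\ppt(G;S)\ge 1$ for every $S\ne V(G)$.

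For the lower bound $(1)$, the inequality $\pd(G)\ge 1$ is immediate since a nonempty graph needs at least one PMU. For $\thpda(G)\ge\pd(G)$, I would bound each admissible term from below: if $S\subsetneq V(G)$ is power dominating then $|S|\ge\pd(G)$ by minimality of the power domination number, while $\ppt(G;S)\ge 1$ because $S\ne V(G)$; hence $|S|\,\ppt(G;S)\ge\pd(G)$, and taking the minimum over all such $S$ gives the claim.

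The two upper bounds are each proved by exhibiting a single admissible witness set, and the one preliminary both require is that $\pd(G)\le\gamma(G)<n$ for a connected graph of order at least two. Here $\pd(G)\le\gamma(G)$ holds because every dominating set is power dominating, and $\gamma(G)<n$ holds because deleting one vertex that has a neighbor (possible since $G$ is connected of order at least two) leaves a dominating set. This guarantees that the witnesses below are proper subsets of $V(G)$, hence admissible in the minimization. For $(2)$, take a minimum dominating set $D$: then $|D|=\gamma(G)<n$ and $N[D]=V(G)$, so the domination step alone observes every vertex, giving $\ppt(G;D)=1$ and $\thpda(G)\le|D|\,\ppt(G;D)=\gamma(G)$. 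For $(3)$, take a minimum power dominating set $S$ realizing the propagation time, so $|S|=\pd(G)<n$ and $\ppt(G;S)=\ppt(G)$, giving $\thpda(G)\le|S|\,\ppt(G;S)=\pd(G)\,\ppt(G)$.

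None of the three bounds poses a genuine difficulty; the only point needing care is the admissibility constraint $S\subsetneq V(G)$ in the two upper bounds, which is why I single out $\gamma(G)<n$ (valid for connected graphs of order at least two) as the structural fact the argument truly rests on.
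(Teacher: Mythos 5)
Your proof is correct and takes essentially the approach the paper intends: the result is stated as an Observation (cited from \cite{product-power-throt}) precisely because all three bounds fall directly out of the definition $\thpda(G)=\min_{S\subsetneq V(G)}|S|\,\ppt(G;S)$ — bounding each admissible term below by $\pd(G)\cdot 1$ for (1), and using a minimum dominating set (which has propagation time $1$) and a minimum power dominating set as witnesses for (2) and (3), exactly as you do. Your explicit check of the admissibility constraint $S\subsetneq V(G)$ via $\gamma(G)<n$ for connected graphs of order at least two is a point the paper leaves implicit, and it is handled correctly.
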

The next result uses the maximum degree of a graph, denoted by $\Delta (G)$,  to give a lower bound for $\thpda(G)$. 
\begin{prop}{\rm \cite{product-power-throt}}\label{t:lowerbd} 
Let $G$ be a connected graph of order at least two. Then \[ \thpda(G) \ge \lc \frac {|V(G)|} {\Delta(G)+1}\rc.\]
\end{prop}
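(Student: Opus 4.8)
The plan is to show the bound holds for \emph{every} power dominating set $S$ with $|S|=k<|V(G)|$, namely $\thpda(G;S)=k\,\ppt(G;S)\ge |V(G)|/(\Delta(G)+1)$, and then take the minimum over $S$. Write $n=|V(G)|$, $\Delta=\Delta(G)$, and $p=\ppt(G;S)$. The two facts I will lean on are the relation $\ppt(G;S)=\pt(G;N[S])+1$ recorded in the introduction (so the process is one domination step producing $N[S]$, followed by $p-1$ standard zero forcing rounds) and the trivial degree estimate $|N[S]|=\big|\bigcup_{v\in S}N[v]\big|\le k(\Delta+1)$. Combining these, it suffices to prove the single inequality $n\le |N[S]|\cdot p$, since then $n\le k(\Delta+1)p$, and dividing by $\Delta+1$ gives $k\,p\ge n/(\Delta+1)$.

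The core step is the bound $n\le |N[S]|\cdot p$, which I would obtain from the \emph{forcing chain} structure of standard zero forcing. Since $S$ is a power dominating set, $B:=N[S]$ is a standard zero forcing set, so zero forcing from $B$ colors all of $V(G)$. In standard zero forcing each vertex forces at most one vertex (once its unique white neighbor turns blue it has no white neighbors left) and is forced at most once; hence the forces partition $V(G)$ into $|B|$ chains, each a path $w_0\to w_1\to\cdots\to w_m$ whose starting vertex $w_0$ is an unforced vertex, i.e.\ an element of $B$. The key quantitative observation is that along a single chain at most one force can occur per round: if $w_{j-1}$ forces $w_j$ in some round, then $w_{j-1}$ was already blue, so $w_j\to w_{j+1}$ cannot occur until the next round. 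With $w_0$ observed by round $1$ (the domination step) and only $p-1$ forcing rounds available, each chain therefore contains at most $p$ vertices. Summing over the $|B|=|N[S]|$ chains, which cover $V(G)$, yields $n\le |N[S]|\cdot p$.

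Putting the pieces together, for every power dominating set $S\subsetneq V(G)$ I get
\[
\thpda(G;S)=k\,p\ \ge\ \frac{n}{\Delta+1},
\]
and since $\thpda(G;S)$ is a positive integer it is at least $\lc n/(\Delta+1)\rc$. Taking the minimum over all such $S$ gives $\thpda(G)\ge \lc |V(G)|/(\Delta(G)+1)\rc$, as desired.

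The routine parts here are the degree estimate $|N[S]|\le k(\Delta+1)$ and the final passage to the ceiling via integrality. The one step that needs genuine care — and which I expect to be the main obstacle — is the chain-counting argument: one must correctly translate the power propagation time $p$ into exactly $p-1$ zero forcing rounds (using $\ppt(G;S)=\pt(G;N[S])+1$), verify that the forces from the blue set $N[S]$ decompose $V(G)$ into precisely $|N[S]|$ disjoint chains, and justify that a chain gains at most one vertex per round so that its length is capped by $p$. Everything else is bookkeeping around these facts.
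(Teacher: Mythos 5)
Your proof is correct. It is worth noting that the paper itself contains no argument for this proposition: it only records that the statement follows as a corollary of Theorem 2.1 in \cite{FHKY17}, so your write-up supplies a self-contained proof of a fact the paper merely cites, and it is essentially the standard counting argument underlying that citation. Each step checks out: the identity $\ppt(G;S)=\pt(G;N[S])+1$ converts the process into one domination step followed by $p-1$ standard zero forcing rounds; in standard zero forcing each vertex forces at most once and is forced at most once, so any propagating set of forces decomposes $V(G)$ into exactly $|N[S]|$ chains rooted at the vertices of $N[S]$; rounds strictly increase along a chain, so each chain contains at most $1+(p-1)=p$ vertices, giving $n\le |N[S]|\,p$; and $|N[S]|\le k(\Delta(G)+1)$ then yields $n\le k(\Delta(G)+1)p$, whence $k\,\ppt(G;S)\ge \lc \frac{n}{\Delta(G)+1}\rc$ by integrality, for every power dominating set $S$ with $|S|=k<n$. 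One refinement you could add, which brings your argument even closer to the inequality cited from \cite{FHKY17}: a vertex $x\in S$ never performs a force, since after the domination step every neighbor of $x$ is already observed, so only the at most $k\Delta(G)$ chains rooted at $N[S]\setminus S$ can grow; this gives the slightly stronger bound $n\le k(\Delta(G)+1)+k\Delta(G)(p-1)$, which also implies the proposition because $p\ge 1$. This sharpening is not needed for the statement as given.
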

Proposition~\ref{t:lowerbd} follows as a corollary of Theorem 2.1 in {\rm \cite{FHKY17}. 

\subsection{Graphs with $\thpda(G)=\gamma(G)$}
The following results from~\cite{product-power-throt} summarize conditions sufficient to ensure  $\thpda(G)=\gamma(G)$ and give families of graphs for which  $\thpda(G)=\gamma(G)$.

\begin{obs}{\rm \cite{product-power-throt}}\label{c:squeeze} In any connected graph $G$ of order $n\ge 2$ with $\gamma(G)=\lc\frac n {\Delta(G)+1}\rc$, \[\thpda(G)=\lc\frac n {\Delta(G)+1}\rc.\]
\end{obs}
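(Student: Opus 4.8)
The plan is to prove this by a squeeze (sandwich) argument, which is exactly what the label \texttt{c:squeeze} suggests. The observation follows immediately by combining the lower bound from Proposition~\ref{t:lowerbd} with the upper bound from Observation~\ref{o:basic-bds}, once the hypothesis is used to make the two bounds coincide. Since both bounds are already established earlier in the excerpt, no new graph-theoretic work is required; the hypothesis $\gamma(G)=\lc\frac{n}{\Delta(G)+1}\rc$ is precisely the condition that forces equality throughout.

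First I would invoke the lower bound. By Proposition~\ref{t:lowerbd}, for any connected graph $G$ of order at least two with $n=|V(G)|$ we have $\thpda(G)\ge\lc\frac{n}{\Delta(G)+1}\rc$. Next I would invoke the upper bound: by part~$(2)$ of Observation~\ref{o:basic-bds}, $\thpda(G)\le\gamma(G)$. Finally I would substitute the hypothesis $\gamma(G)=\lc\frac{n}{\Delta(G)+1}\rc$ into the upper bound, so that the chain of inequalities reads
\[ \lc\frac{n}{\Delta(G)+1}\rc \le \thpda(G) \le \gamma(G) = \lc\frac{n}{\Delta(G)+1}\rc. \]
Consequently every inequality in this chain is forced to be an equality, giving $\thpda(G)=\lc\frac{n}{\Delta(G)+1}\rc$, as claimed.

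Because the argument is a direct consequence of two previously proved bounds, there is no real obstacle to overcome. The only point worth checking is that the standing hypotheses of Proposition~\ref{t:lowerbd} and Observation~\ref{o:basic-bds} — namely that $G$ is connected and of order at least two — match exactly the hypotheses assumed in the statement of this observation, which they do. Thus the proof amounts to little more than recording the sandwich and verifying that the two ceiling quantities are literally the same expression.
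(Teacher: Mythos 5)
Your proof is correct and matches the paper's own reasoning: the paper notes that Observation~\ref{c:squeeze} follows from the lower bound in Proposition~\ref{t:lowerbd} combined with the upper bound $\thpda(G)\le\gamma(G)$ from Observation~\ref{o:basic-bds}, which is exactly your squeeze argument. Nothing is missing.
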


Observation~\ref{c:squeeze} follows as a result of Proposition~\ref{t:lowerbd}. The next result gives two families of graphs for which the equality in Observation~\ref{c:squeeze} holds.
 
\begin{obs}\label{p:pathcycle} {\rm \cite{product-power-throt}} Let $n\ge 2$.  Then $\thpda(P_n)=\lc \frac n 3\rc$ because $\gamma(P_n)= \lc \frac n 3\rc$ and $\Delta(P_n)=2$.  Similarly,  $\thpda(C_n)=\lc \frac n 3\rc$ since $\gamma(C_n)= \lc \frac n 3\rc$ and $\Delta(C_n)=2$, $\thpda(C_n)=\lc \frac n 3\rc$.
\end{obs}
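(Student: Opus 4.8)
The plan is to recognize Observation~\ref{p:pathcycle} as an immediate corollary of Observation~\ref{c:squeeze}, so that the only substantive work is verifying the hypothesis of that observation for the two families. Since every internal vertex of a path and every vertex of a cycle has degree $2$, we have $\Delta(P_n)=\Delta(C_n)=2$, and hence $\Delta(G)+1=3$; the target quantity $\lc n/(\Delta(G)+1)\rc$ in Observation~\ref{c:squeeze} is therefore exactly $\lc n/3\rc$. Both $P_n$ and $C_n$ are connected of order $n\ge 2$ (reading $C_n$ in the usual sense for $n\ge 3$), so the structural assumptions of Observation~\ref{c:squeeze} are in place, and it remains only to compute the domination numbers.

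The heart of the argument is thus the classical evaluation of $\gamma(P_n)$ and $\gamma(C_n)$. For the upper bound I would exhibit an explicit dominating set: labeling the vertices $1,\dots,n$ along the path, the set $\{2,5,8,\dots\}$ consisting of every third vertex starting from the second has size $\lc n/3\rc$, and its closed neighborhoods cover $V(P_n)$; the same every-third-vertex construction around the cycle handles $C_n$. For the matching lower bound I would use that each vertex dominates at most $\Delta(G)+1=3$ vertices (itself and its neighbors), so any dominating set has at least $n/3$ vertices and hence at least $\lc n/3\rc$ of them. This yields $\gamma(P_n)=\gamma(C_n)=\lc n/3\rc$ in both cases.

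With $\gamma(P_n)=\gamma(C_n)=\lc n/3\rc=\lc n/(\Delta(G)+1)\rc$ verified, Observation~\ref{c:squeeze} gives $\thpda(P_n)=\thpda(C_n)=\lc n/3\rc$ directly. Concretely, the underlying squeeze is between the lower bound $\thpda(G)\ge \lc n/(\Delta(G)+1)\rc$ of Proposition~\ref{t:lowerbd} and the upper bound $\thpda(G)\le\gamma(G)$ of Observation~\ref{o:basic-bds}, which coincide precisely when $\gamma(G)$ meets the domination lower bound. I do not anticipate a genuine obstacle, as the statement itself supplies the two driving facts; the only points requiring minor care are confirming the standard values of $\gamma$ for paths and cycles and checking the small-$n$ boundary cases so that the hypotheses of Observation~\ref{c:squeeze} are literally satisfied.
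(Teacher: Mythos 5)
Your proposal is correct and follows exactly the paper's route: the paper presents this observation as an immediate instance of Observation~\ref{c:squeeze} (i.e., the squeeze between Proposition~\ref{t:lowerbd} and the bound $\thpda(G)\le\gamma(G)$ from Observation~\ref{o:basic-bds}), driven by the standard facts $\gamma(P_n)=\gamma(C_n)=\lc n/3\rc$ and $\Delta=2$. Your additional verification of the domination numbers and the small-$n$ cases is just a fleshing-out of what the paper treats as known.
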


\begin{prop}\label{basic} {\rm \cite{product-power-throt}} Let $G$ be a connected graph of order at least two. Then $\pd(G)=\gamma(G)$ if and only if $\ppt(G)=1$. In this case  $\thpda(G)=\gamma(G)$.\end{prop}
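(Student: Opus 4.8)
The plan is to reduce the entire statement to one elementary observation about the power domination process: for any $S$, the domination step gives $P^{[1]}(S)=N[S]$, so $\ppt(G;S)=1$ holds \emph{exactly} when $N[S]=V(G)$, i.e.\ when $S$ is a dominating set. I would combine this with two standard facts. First, $\pd(G)\le\gamma(G)$, since every dominating set is a power dominating set. Second, for a connected graph of order at least two, $\gamma(G)<|V(G)|$, so every set $S$ with $|S|=\pd(G)\le\gamma(G)$ is a proper subset of $V(G)$; by the observation recorded in the excerpt that $\ppt(G;S)\ge 1$ for all $S\ne V(G)$, this yields $\ppt(G)\ge 1$.

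For the forward direction I would assume $\pd(G)=\gamma(G)$ and take a minimum dominating set $S$, so that $|S|=\gamma(G)=\pd(G)$ and $N[S]=V(G)$. Then $S$ is a power dominating set realizing $\pd(G)$ with $\ppt(G;S)=1$, whence $\ppt(G)=\min_{|S'|=\pd(G)}\ppt(G;S')\le 1$; together with $\ppt(G)\ge 1$ this forces $\ppt(G)=1$. For the converse I would assume $\ppt(G)=1$ and pick a set $S$ achieving it, so $|S|=\pd(G)$ and $\ppt(G;S)=1$. By the key observation $N[S]=V(G)$, so $S$ is a dominating set and $\gamma(G)\le|S|=\pd(G)$; combined with $\pd(G)\le\gamma(G)$ this gives equality.

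For the final assertion I would simply invoke the bounds of Observation~\ref{o:basic-bds} under the hypothesis $\pd(G)=\gamma(G)$ and $\ppt(G)=1$: the lower bound gives $\thpda(G)\ge\pd(G)=\gamma(G)$, while the third (upper) bound gives $\thpda(G)\le\pd(G)\,\ppt(G)=\gamma(G)\cdot 1=\gamma(G)$, so $\thpda(G)=\gamma(G)$. (Equivalently, since $\pd(G)=\gamma(G)$ collapses the minimization range to the single value $k=\gamma(G)$, one has $\thpda(G)=\gamma(G)\,\ppt(G,\gamma(G))=\gamma(G)$.)

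I do not anticipate a genuine obstacle here; the argument is a direct unwinding of definitions. The only point deserving care is confirming that for connected graphs of order at least two the minimizing set for $\ppt(G)$ never equals $V(G)$, so that $\ppt(G)\ge 1$ rather than the degenerate value $0$. This holds because $\pd(G)\le\gamma(G)$ and $\gamma(G)<|V(G)|$ for such graphs, guaranteeing $|S|<|V(G)|$ for every size-$\pd(G)$ set.
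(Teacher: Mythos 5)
Your proposal is correct. The paper states Proposition~\ref{basic} without proof (it is quoted from \cite{product-power-throt}), and your argument---equating $\ppt(G;S)=1$ with $S$ being a dominating proper subset of $V(G)$, using $\pd(G)\le\gamma(G)<|V(G)|$ to rule out the degenerate value $\ppt(G;S)=0$, and closing with the bounds $\pd(G)\le\thpda(G)\le\pd(G)\,\ppt(G)$ from Observation~\ref{o:basic-bds}---is exactly the standard definitional unwinding this result admits, so it matches the intended proof in approach and is complete as written.
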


\begin{cor} Let $n\ge 2$. 
\ben[$(1)$]
\item $\thpda(K_n)=1=\gamma(K_n)$.
\item $\thpda(K_{1,n-1})=1=\gamma(K_{1,n-1})$ and  $\thpda(K_{r,n-r})=2=\gamma(K_{r,n-r})$ for $2\le r\le n-2$.
\een
\end{cor}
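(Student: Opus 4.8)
The plan is to read every value off the formula $\thpda(G)=\min_{1\le k<n}k\,\ppt(G,k)$ (equivalently $\thpda(G)=\min_{\pd(G)\le k\le\gamma(G)}k\,\ppt(G,k)$) together with the bounds $\pd(G)\le\thpda(G)\le\gamma(G)$ from Observation~\ref{o:basic-bds}. The only genuinely new input I would record first is the elementary equivalence $\thpda(G)=1\iff\gamma(G)=1$. For the forward implication, if $v$ is a dominating vertex then $P^{[1]}(\{v\})=N[v]=V(G)$, so $\ppt(G;\{v\})=1$ and $\thpda(G;\{v\})=1$; hence $\thpda(G)=1$ because it is a minimum of positive integers. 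Conversely, every admissible $k$ satisfies $k\ge 1$ and, being the size of a proper subset (since $k<n$), forces $\ppt(G,k)\ge 1$; thus the product $k\,\ppt(G,k)$ can equal $1$ only when $k=1$ and $\ppt(G,1)=1$, and $\ppt(G,1)=1$ means some single vertex $v$ has $N[v]=V(G)$, i.e.\ $\gamma(G)=1$. In particular $\gamma(G)\ge 2$ already forces $\thpda(G)\ge 2$.

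For part~(1) and the star in part~(2), I would observe that any vertex of $K_n$ dominates, and the center of $K_{1,n-1}$ dominates, so $\gamma(K_n)=\gamma(K_{1,n-1})=1$. The equivalence above then gives $\thpda(K_n)=\thpda(K_{1,n-1})=1$, matching $\gamma$. (Alternatively this is the equality case of Proposition~\ref{basic}, since here $\pd=\gamma=1$.)

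For the remaining complete bipartite graphs $K_{r,n-r}$ with $2\le r\le n-2$, I would first compute $\gamma(K_{r,n-r})=2$: no single vertex dominates, since a vertex in a part of size at least two leaves the remaining vertices of its own part unobserved, while one vertex from each part together cover all of $V(G)$. The upper bound $\thpda(K_{r,n-r})\le\gamma=2$ is Observation~\ref{o:basic-bds}(2), and the lower bound $\thpda(K_{r,n-r})\ge 2$ is exactly the contrapositive of the equivalence above, as $\gamma=2>1$. Combining the two inequalities yields $\thpda(K_{r,n-r})=2=\gamma(K_{r,n-r})$.

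The point to watch, and the reason the ``$\thpda(G)=1\iff\gamma(G)=1$'' step is the load-bearing one, is that the complete bipartite case cannot be shortcut by claiming $\pd=\gamma$ and quoting Proposition~\ref{basic}. In the boundary subcases $r=2$ (or symmetrically $n-r=2$) a single vertex $a$ in the part $\{a,a'\}$ of size two already power dominates: it dominates everything except $a'$, and each vertex of the opposite part then has $a'$ as its unique unobserved neighbor and forces it, so $\pd=1<2=\gamma$ there. Everything else in the argument is a routine domination-number computation, so I expect no serious obstacle beyond correctly isolating this equivalence and noticing that it, rather than $\pd=\gamma$, is what supplies the lower bound $\thpda\ge 2$.
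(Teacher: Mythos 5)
Your proof is correct, and it takes a genuinely different route from the one the paper intends. The paper states this corollary immediately after Proposition~\ref{basic} with no separate proof, so its implicit argument is: verify $\pd(G)=\gamma(G)$ for each family, then invoke Proposition~\ref{basic} to get $\thpda(G)=\gamma(G)$. That works for $K_n$, $K_{1,n-1}$, and $K_{r,n-r}$ with $3\le r\le n-3$ (where one checks $\pd=\gamma=2$), but, as you correctly observe, it breaks down in the boundary subcase $r=2$ or $n-r=2$: there $\pd(K_{2,n-2})=1<2=\gamma(K_{2,n-2})$, since a single vertex of the size-two part dominates all but its partner, which is then forced by any vertex of the opposite part; equivalently, $\ppt(K_{2,n-2})=2\neq 1$, so the hypothesis of Proposition~\ref{basic} fails. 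Your replacement for the lower bound --- the equivalence $\thpda(G)=1\iff\gamma(G)=1$, which is exactly Proposition~\ref{thpda:low}(\ref{thpda1}) of the paper (appearing only in the later ``Extreme values'' subsection), combined with the upper bound $\thpda(G)\le\gamma(G)$ from Observation~\ref{o:basic-bds} --- handles all values of $r$ uniformly and requires nothing about $\pd$. What the paper's route buys is brevity when it applies ($\pd=\gamma$ immediately yields $\thpda=\gamma$); what your route buys is completeness, since it closes the $K_{2,n-2}$ gap that a literal reading of the paper's derivation leaves open. Your computations ($\gamma(K_{r,n-r})=2$ for $2\le r\le n-2$, the forward and converse directions of the equivalence, and the observation that $\ppt(G,k)\ge 1$ whenever $k<n$) are all accurate.
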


A graph $G$ is a \emph{unit interval graph} if there is a representation that assigns a closed unit length real interval $I(v)$ to each $v \in V(G)$ such that  $I(x) \cap I(y) \neq \emptyset$   if and only if $xy \in E(G)$ for $x, y \in V(G)$.  
It was shown in \cite{product-power-throt} that the  product power throttling number of a unit interval graph is its domination number.  
 
   \begin{thm}
 \label{t:unit-int-pd} {\rm \cite{product-power-throt}} 
 If $G$ is a connected unit interval graph, then $\thpda(G) = \gamma(G)$.
 \end{thm}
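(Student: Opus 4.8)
The plan is to prove the two inequalities $\thpda(G)\le\gamma(G)$ and $\thpda(G)\ge\gamma(G)$ separately. The first is immediate and is already recorded: taking $S$ to be a minimum dominating set gives $N[S]=V(G)$, hence $\ppt(G;S)=1$ and $\thpda(G;S)=\gamma(G)$, which is the bound $\thpda(G)\le\gamma(G)$ of Observation~\ref{o:basic-bds}. The content is therefore the lower bound, and I would establish it by showing that \emph{every} admissible number of PMUs is at least as expensive, i.e. $k\,\ppt(G,k)\ge\gamma(G)$ for all $k$ with $\pd(G)\le k<n$. I would interpose the $k$-radius and prove the chain $k\,\ppt(G,k)\ge k\,\rad_k(G)\ge\gamma(G)$, taking the minimum over $k$ at the very end.

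The first link, $\ppt(G,k)\ge\rad_k(G)$, holds for every graph and is the formal version of the remark that the number of rounds is at least the distance to $S$. Fixing a power dominating set $S$ with $|S|=k$, an easy induction on the round number shows that any vertex observed by round $i$ lies within distance $i$ of $S$: the domination step (round $1$) reaches exactly the vertices of $N[S]$, and each subsequent zero forcing step only observes a vertex adjacent to the already observed set. Hence $\ppt(G;S)\ge\ecc(S)\ge\rad_k(G)$, and minimizing over $S$ with $|S|=k$ gives $\ppt(G,k)\ge\rad_k(G)$.

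The second link, $\gamma(G)\le k\,\rad_k(G)$, is where the unit interval structure enters, and I expect it to be the main obstacle. Write $\rho=\rad_k(G)$ (note $\rho\ge1$ since $k<n$) and choose $S^{\ast}$ with $|S^{\ast}|=k$ and $\ecc(S^{\ast})=\rho$, so that $V(G)=\bigcup_{s\in S^{\ast}}B_\rho(s)$. I would use the canonical vertex order $u_1<\cdots<u_n$ of the unit interval graph, in which each $N[u_i]$ is an index interval $[\ell(i),r(i)]$ with $\ell,r$ nondecreasing, consecutive vertices are adjacent because $G$ is connected, and $\dist(u_i,u_j)$ is monotone in $j$ for fixed $i$. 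These facts make each ball $B_\rho(s)$ an order-interval $[a,b]$ with $\dist(u_a,u_b)\le2\rho$, and I would dominate $[a,b]$ greedily from the left: repeatedly take the leftmost still-uncovered vertex $u_{x_j}$ and add the vertex of $N[u_{x_j}]$ whose right reach $r(\cdot)$ is largest, setting $x_{j+1}$ to be the first index beyond this reach. The crucial claim, which genuinely uses the interval structure, is that consecutive greedy targets satisfy $\dist(u_{x_j},u_{x_{j+1}})\ge3$: adjacency is impossible because $x_{j+1}$ exceeds $r(u_{x_j})$, and a common neighbor $w$ is impossible because $w\in N[u_{x_j}]$ would force $r(w)\le r(d_j)<x_{j+1}$ while $w\sim u_{x_{j+1}}$ forces $r(w)\ge x_{j+1}$. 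Together with distance monotonicity, $\dist(u_a,u_{x_J})\le2\rho$ then bounds the number of greedy steps by $\lfloor 2\rho/3\rfloor+1\le\rho$, so at most $\rho$ vertices dominate $B_\rho(s)$.

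Summing over the $k$ balls produces a set of at most $k\rho=k\,\rad_k(G)$ vertices dominating $\bigcup_{s\in S^{\ast}}B_\rho(s)=V(G)$, which gives $\gamma(G)\le k\,\rad_k(G)$. Chaining the two links yields $k\,\ppt(G,k)\ge\gamma(G)$ for every $k$ in range, hence $\thpda(G)=\min_{\pd(G)\le k<n}k\,\ppt(G,k)\ge\gamma(G)$, and combined with the upper bound this proves $\thpda(G)=\gamma(G)$. I anticipate that the radius bound on propagation time and the final assembly are routine, and that essentially all the work lies in the greedy interval-domination estimate of the preceding paragraph.
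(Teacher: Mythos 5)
Your frame is sound and is not where the problem lies: the upper bound is Observation \ref{o:basic-bds}, the inequality $\ppt(G,k)\ge\rad_k(G)$ is correct, and the theorem does reduce to showing $\gamma(G)\le k\,\rad_k(G)$ for connected unit interval graphs. The genuine gap is your count of greedy steps. From ``consecutive greedy targets are at distance at least $3$'' you infer, via ``distance monotonicity,'' that $\dist(u_a,u_{x_J})\le 2\rho$ forces $J\le\lfloor 2\rho/3\rfloor+1$. That inference needs superadditivity of distances along the canonical order, i.e.\ $\dist(u_{x_1},u_{x_J})\ge 3(J-1)$, and this is false in unit interval graphs; monotonicity of $\dist(u_i,u_j)$ in $j$ gives nothing of the sort. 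Concretely, take unit intervals with left endpoints $0,\,0.8,\,1.6,\,2,\,2.4,\,3.2,\,4$, giving $v_1,\dots,v_7$ with edges $v_1v_2$, $v_2v_3$, $v_3v_4$, $v_3v_5$, $v_4v_5$, $v_5v_6$, $v_6v_7$: your greedy produces exactly the targets $v_1,v_4,v_7$, and $\dist(v_1,v_4)=\dist(v_4,v_7)=3$ while $\dist(v_1,v_7)=5<6$. Chaining this gadget makes your bound itself fail, not just its justification: with vertices $a_j,b_j$ $(0\le j\le 4)$ and $c_j$ $(0\le j\le 3)$ placed at $1.2j$, $1.2j+0.1$, $1.2j+1.1$ (so the edges are $a_jb_j$, $b_jc_j$, $c_ja_{j+1}$, $c_jb_{j+1}$), one gets a connected unit interval graph of radius $\rho=5$ centered at $a_2$, on which your greedy needs the five targets $a_0,\dots,a_4$ and five dominators, whereas your claimed bound allows only $\lfloor 10/3\rfloor+1=4$.

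The per-ball conclusion you need (at most $\rho$ dominators per ball, hence $\gamma(G)\le k\,\rad_k(G)$) is still true, but the progress measure must be distance from the ball's center $s$, not distance between consecutive targets. Using the monotone right-reach function of the canonical order one can show that consecutive greedy targets on the same side of $s$ satisfy $\dist(s,u_{x_{j+1}})\ge\dist(s,u_{x_j})+2$, and if $u_{x_\ell}$ and $u_{x_{\ell+1}}$ are the targets straddling $s$ then $\dist(s,u_{x_\ell})+\dist(s,u_{x_{\ell+1}})\ge\dist(u_{x_\ell},u_{x_{\ell+1}})\ge 3$; summing the left and right chains gives $2\rho\ge 2J-1$, hence $J\le\rho$. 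This distance-from-center accounting is exactly what this chapter does for the Cops-and-Robbers analogue: Lemma \ref{unit-int-lem-c} spends the two extremal vertices $L^{d}(x),R^{d}(x)$ to dominate the two distance classes $d$ and $d+1$ from a center, yielding at most $\rho$ vertices per center in the proof of Theorem \ref{t:unit-int-c}, while the source \cite{product-power-throt} for the power domination statement argues by partitioning vertices according to the round in which they are observed. If you replace your step-count argument by one of these, the rest of your proof (the reduction and the final assembly) goes through.
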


\subsection{Cartesian products}

The \emph{Cartesian product} $G\Box H$ of graphs $G$ and $H$ is the graph whose vertex set is $V(G\Box H) = V(G) \times V(H)$ where two vertices $(x_1,y_1)$ and $(x_2,y_2)$ are adjacent in $G \Box H$ if either $x_1=x_2$ and $y_1y_2\in E(H)$ or $y_1=y_2$ and $x_1x_2\in E(G)$. Bounds on the product power throttling number of a Cartesian product were presented  in \cite{product-power-throt} and following results were shown.

\begin{prop}{\rm \cite{product-power-throt}}  
For $1\le n\le m$, $ \thpda(K_n \Box K_m) =\gamma(K_n \Box K_m)=n$. \end{prop}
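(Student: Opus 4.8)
The plan is to establish $\thpda(K_n\Box K_m)=n$ by matching the general upper bound $\thpda(G)\le\gamma(G)$ from Observation~\ref{o:basic-bds} with a lower bound obtained by pinning down both $\gamma$ and $\pd$ and then invoking the identity $\thpda(G)=\min_{\pd(G)\le k\le\gamma(G)}k\,\ppt(G,k)$. I would view $G=K_n\Box K_m$ as the $n\times m$ rook's graph, with vertices $(i,j)$ adjacent exactly when they share a row or a column. The heart of the argument is the claim that $\gamma(G)=n$ while $\pd(G)=n-1$, so that the minimum defining $\thpda(G)$ runs over only the two values $k=n-1$ and $k=n$.

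First I would compute $\gamma(G)=n$. For the upper bound the diagonal $\{(i,i):1\le i\le n\}$ (legitimate since $n\le m$) dominates, because every $(i,j)$ shares row $i$ with $(i,i)$. For the lower bound, suppose a dominating set $S$ has $|S|\le n-1$; let $R$ and $C$ be the sets of occupied rows and columns, so $|R|,|C|\le n-1<n\le m$. Then some row $i^*\notin R$ and some column $j^*\notin C$ exist, and $(i^*,j^*)$ is neither in $S$ nor adjacent to any vertex of $S$, a contradiction. Hence $\gamma(G)=n$, which already yields $\thpda(G)\le n$.

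Next I would show $\pd(G)=n-1$. For the lower bound I would analyze the first zero forcing step after the domination step. If a power dominating set occupies $a$ rows and $b$ columns, the unobserved region after domination is exactly the rook subgraph on the $n-a$ unoccupied rows and $m-b$ unoccupied columns; a short case check shows an observed vertex has a unique unobserved neighbor only when $n-a=1$ or $m-b=1$. Thus if $a\le n-2$ and $b\le m-2$ no force can ever fire and $S$ fails to be power dominating, so $a\ge n-1$ or $b\ge m-1$ and hence $|S|\ge n-1$. For the upper bound, $S=\{(i,i):1\le i\le n-1\}$ leaves only row $n$ (columns $n,\dots,m$) unobserved after the domination step, and in the next round every such $(n,j)$ is forced simultaneously by $(1,j)$, whose only unobserved neighbor is $(n,j)$; this exhibits a power dominating set of size $n-1$ with $\ppt(G;S)=2$.

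Finally I would assemble the pieces. Because $\pd(G)=n-1$ and $\gamma(G)=n$, the identity above reduces $\thpda(G)$ to $\min\{(n-1)\ppt(G,n-1),\,n\,\ppt(G,n)\}$. A size-$n$ dominating set gives $\ppt(G,n)=1$, while no dominating set of size $n-1$ exists so $\ppt(G,n-1)\ge 2$, matched by the construction above; thus $\thpda(G)=\min\{2(n-1),n\}=n$ for $n\ge 2$, with the case $n=1$ (where $G=K_m$) handled directly. I expect the main obstacle to be the lower bound $\pd(G)\ge n-1$: the content is entirely in the stalling lemma showing that once at least two rows and two columns remain unoccupied, the remaining unobserved rook subgraph on at least two rows and two columns admits no force, so the domination step alone must already clear all but one row or all but one column.
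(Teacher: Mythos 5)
Your proof is correct. There is nothing in this survey to compare it against: the paper states the proposition without proof, citing \cite{product-power-throt}, so your argument supplies exactly what the citation omits, and it is the natural route --- compute $\gamma(K_n\Box K_m)=n$ (diagonal upper bound; an unoccupied row meeting an unoccupied column for the lower bound), compute $\pd(K_n\Box K_m)=n-1$, observe $\ppt(K_n\Box K_m,n)=1$ and $\ppt(K_n\Box K_m,n-1)=2$, and then use the paper's identity $\thpda(G)=\min_{\pd(G)\le k\le\gamma(G)}k\,\ppt(G,k)$ to get $\min\{2(n-1),n\}=n$ for $n\ge 2$, with $n=1$ reducing to $\thpda(K_m)=1$. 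The key steps all check out: after the domination step the unobserved set is precisely the rook subgraph on the unoccupied rows and columns; an observed vertex has a unique unobserved neighbor only when exactly one row or exactly one column is unoccupied (so with $a\le n-2$ occupied rows and $b\le m-2$ occupied columns the process stalls on a nonempty unobserved set); and $|S|\ge\max\{a,b\}$ converts that structural conclusion into $\pd(K_n\Box K_m)\ge n-1$, matched by the construction $S=\{(i,i):1\le i\le n-1\}$, which forces all of row $n$ in a single round.
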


\begin{prop}\label{completeprod-p1}{\rm \cite{product-power-throt}} 
Let $H$ be a connected graph of order $n$ and let $G = H \Box K_m$ with $m \ge \Delta(H)(n - 1)+1$.  Then $\thpda(G) =n=\gamma(G)$.
In particular, if  $H = C_n$ or $P_n$ and $m \ge 2n - 1$, then $ \thpda(G) =n$. 
\end{prop}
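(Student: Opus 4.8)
The plan is to sandwich $\thpda(G)$ between $n$ and $\gamma(G)$ and show that both ends equal $n$, so that no explicit computation of power propagation time is needed. Write the vertices of $G = H \Box K_m$ as pairs $(x,y)$ with $x \in V(H)$ and $y \in V(K_m)$, so that each ``column'' $\{(x,y): y \in V(K_m)\}$ (with $x$ fixed) induces a clique $K_m$, while each ``row'' $\{(x,y): x \in V(H)\}$ (with $y$ fixed) induces a copy of $H$. Then $|V(G)| = nm$, and every vertex $(x,y)$ has degree $(m-1) + \deg_H(x)$, so $\Delta(G) = m - 1 + \Delta(H)$.

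First I would establish the arithmetic identity $\lceil \frac{nm}{\Delta(G)+1}\rceil = n$. Since $\Delta(H) \ge 1$ for a connected graph $H$ of order $n \ge 2$ (the degenerate case $n=1$ gives $G = K_m$ and is already covered by the corollary on complete graphs), we have $\frac{nm}{m + \Delta(H)} < n$, so the ceiling is at most $n$. For the reverse inequality, the hypothesis $m \ge \Delta(H)(n-1)+1$ rearranges to $nm > (n-1)(m + \Delta(H))$, that is $\frac{nm}{m+\Delta(H)} > n-1$, so the ceiling is at least $n$. Hence $\lceil \frac{nm}{\Delta(G)+1}\rceil = n$, and Proposition~\ref{t:lowerbd} yields $\thpda(G) \ge n$.

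Next I would produce a dominating set of size $n$ to bound $\gamma(G)$ from above. Fixing any $y_0 \in V(K_m)$, the set $D = \{(x, y_0): x \in V(H)\}$ (a single copy of $H$) meets every clique-column, and because each column induces $K_m$, the one vertex of $D$ in a given column dominates that entire column; thus $N[D] = V(G)$ and $\gamma(G) \le n$. Combining this with Observation~\ref{o:basic-bds}(2), which gives $\thpda(G) \le \gamma(G)$, produces the chain $n \le \thpda(G) \le \gamma(G) \le n$, forcing $\thpda(G) = \gamma(G) = n$. (Equivalently, one verifies $\gamma(G) = \lceil \frac{|V(G)|}{\Delta(G)+1}\rceil$ and invokes Observation~\ref{c:squeeze}.) The ``in particular'' clause is then immediate: for $H = C_n$ or $P_n$ one has $\Delta(H) = 2$, so the threshold $\Delta(H)(n-1)+1$ specializes to $2n-1$.

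I do not expect a serious obstacle here; the only point demanding care is the two-sided arithmetic of the second step, where the strict upper bound $\frac{nm}{m+\Delta(H)} < n$ relies on $\Delta(H) \ge 1$ and hence on isolating the complete-graph case $n = 1$. Everything else reduces to the elementary observation that a clique is dominated by any single one of its vertices, together with the already-established lower bound of Proposition~\ref{t:lowerbd}.
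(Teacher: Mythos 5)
Your proof is correct: the degree computation $\Delta(G)=m-1+\Delta(H)$, the two-sided estimate showing $\lceil nm/(\Delta(G)+1)\rceil = n$ exactly when $m \ge \Delta(H)(n-1)+1$, and the dominating set consisting of a single copy of $H$ (one vertex per clique-column) give the squeeze $n \le \thpda(G) \le \gamma(G) \le n$, with the $n=1$ and $P_2$ edge cases handled appropriately. This is essentially the intended argument: the survey states this result without proof (citing \cite{product-power-throt}), but the machinery it supplies---Proposition~\ref{t:lowerbd}, Observation~\ref{c:squeeze}, and Observation~\ref{o:basic-bds}(2)---is precisely what you invoke, so your route coincides with the paper's.
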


\begin{thm}\label{t:grid}{\rm \cite{product-power-throt}}  For all $n\geq 1$ and $m\ge 2$,   $\thpda(J_n\square J_m)=\gamma(J_n\square J_m)$, where $J_r=P_r$ or $J_r=C_r$ for $r\ge 3$ and $J_r=P_r$ for $r=1,2$.
\end{thm}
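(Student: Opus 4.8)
The plan is to prove the two inequalities separately. The upper bound $\thpda(G)\le\gamma(G)$ is immediate from Observation~\ref{o:basic-bds}(2): a minimum dominating set $S$ has $N[S]=V(G)$, hence $\ppt(G;S)=1$ and $\thpda(G;S)=\gamma(G)$. Using the reformulation $\thpda(G)=\min_{\pd(G)\le k\le\gamma(G)}k\,\ppt(G,k)$, the lower bound $\thpda(G)\ge\gamma(G)$ reduces to showing $k\,\ppt(G,k)\ge\gamma(G)$ for every $k$ in this range; equivalently, that every power dominating set $S$ satisfies $|S|\,\ppt(G;S)\ge\gamma(G)$.

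First I would record two structural facts that hold on any graph. Write $t=\ppt(G;S)$. An easy induction on the round shows $P^{[i]}(S)\subseteq N_i[S]$, the closed ball of radius $i$ about $S$, since every observed vertex is adjacent to a previously observed one; thus $P^{[t]}(S)=V(G)$ forces $N_t[S]=V(G)$, so $S$ is a distance-$t$ dominating set. Second, in the zero forcing step each vertex forces at most once, so the forces of rounds $2,\dots,t$ together with the domination step organize $V(G)$ into vertex-disjoint paths of $G$ (forcing chains): along each chain the round labels strictly increase, so it has at most $t+1$ vertices, and the number of chains is at most $|N[S]|\le(\Delta(G)+1)|S|$. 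Counting vertices along chains gives $|V(G)|\le(\Delta(G)+1)|S|\,t$, i.e.\ $|S|\,t\ge\lc|V(G)|/(\Delta(G)+1)\rc$, which is the content of Proposition~\ref{t:lowerbd}.

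This already settles every case in which domination in $J_n\square J_m$ is essentially perfect, namely $\gamma(J_n\square J_m)=\lc|V(G)|/(\Delta(G)+1)\rc$: there Observation~\ref{c:squeeze} applies and $\thpda=\gamma$, covering in particular the tori $C_n\square C_m$ whose domination number meets the $\lc nm/5\rc$ bound. The work therefore concentrates on the cases with a path factor, where boundary vertices of smaller degree make $\gamma(J_n\square J_m)$ strictly larger than $\lc|V(G)|/5\rc$. Here the plain chain count is too lossy, and I would refine it using the grid geometry: near the boundary the forces are more constrained (a degree-$2$ or degree-$3$ vertex can initiate or continue a chain in fewer directions), so one can charge the deficiency $\gamma(J_n\square J_m)-\lc|V(G)|/5\rc$ to the boundary rows and columns and argue that covering them costs either extra propagation time or extra PMUs. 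Concretely I would split into the three product types and into the narrow widths $m\in\{1,2\}$ (handled directly, since $J_n\square J_1$ reduces to a path or cycle already treated in Observation~\ref{p:pathcycle}), and for each use the known closed form for $\gamma(J_n\square J_m)$ together with the refined chain/distance accounting to verify $|S|\,t\ge\gamma(J_n\square J_m)$.

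The main obstacle is exactly this last step. The universal chain bound yields only $|S|\,t\ge\lc|V(G)|/(\Delta(G)+1)\rc$, which matches $\gamma$ precisely when domination is perfect, whereas for grids with boundary $\gamma$ strictly exceeds this value; obtaining a lower bound on $|S|\,t$ sharp enough to reach $\gamma$ rather than merely $|V(G)|/5$ is the crux. This is where the grid structure (bipartiteness, maximum degree $4$, and the inefficiency of both domination and forcing along the boundary) must genuinely be used, since a purely generic argument cannot succeed: a spider consisting of a single high-degree vertex with many long legs has $|S|\,\ppt(G;S)<\gamma(G)$, showing that the desired inequality fails without such geometric hypotheses.
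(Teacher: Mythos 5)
This statement is quoted in the paper from \cite{product-power-throt} without proof, so there is no in-paper argument to compare against line by line; judged on its own terms, your proposal has a genuine gap, and it sits exactly at the step you yourself flag as ``the crux.'' The upper bound, the reduction to showing $|S|\,\ppt(G;S)\ge\gamma(G)$ for every power dominating set, and the generic chain count $|V(G)|\le(\Delta(G)+1)\,|S|\,\ppt(G;S)$ are all correct, but that count can never yield more than $\lc |V(G)|/(\Delta(G)+1)\rc$, while for every product with a path factor $\gamma$ is strictly larger (for $P_n\square P_m$ with $n,m\ge 16$, $\gamma=\lf (n+2)(m+2)/5\rf-4$, which exceeds $nm/5$ by roughly $2(n+m)/5$). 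Your plan to ``charge the deficiency to the boundary rows and columns'' is announced but never executed, and your own spider example shows that some genuinely new inequality must be proved there — so the unexecuted step is the entire theorem. Even the width-two case, which you propose to ``handle directly,'' already contains the difficulty: for even $m$ the chain count gives $kt\ge m/2$ while $\gamma(P_2\square P_m)=\frac m2+1$, and closing that last unit is precisely the kind of delicate chain analysis that this paper's Theorem~\ref{th-p2-pm} spends a full page on in the initial-cost setting. Note also that the technique used in the literature for results of the form $\thpda(G)=\gamma(G)$ runs in the opposite direction from yours: instead of lower-bounding $kt$ by counting observed vertices, one constructs from $S$ and its propagation structure an explicit dominating set of cardinality at most $kt$, which proves $\gamma(G)\le kt$ directly and never requires the closed-form values of $\gamma(J_n\square J_m)$ at all; the proof of Theorem~\ref{t:unit-int-c} in this paper (modeled on the proof of Theorem~\ref{t:unit-int-pd} in \cite{product-power-throt}) is the template. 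Your route, by contrast, needs both the exact domination formulas and a sharpened count, and supplies neither.

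There is also a concrete error in your case split: the tori are not disposed of by Observation~\ref{c:squeeze}. The equality $\gamma(C_n\square C_m)=\lc nm/5\rc$ fails in general — for instance $\gamma(C_3\square C_3)=3>2=\lc 9/5\rc$ and $\gamma(C_3\square C_5)=4>3=\lc 15/5\rc$ — so such tori land in your ``hard'' bucket as well. Worse, they have no boundary, so your proposed refinement (charging the deficiency to boundary inefficiency) has nothing to charge to: for these graphs the excess of $\gamma$ over $\lc nm/5\rc$ comes from divisibility obstructions, not from vertices of small degree, and your outline contains no mechanism for handling that. (Two minor points that do not affect the count you use: the domination step is not a matching, so it cannot be absorbed into vertex-disjoint forcing chains — the chains should simply be rooted at the vertices of $N[S]$ — and each chain then has at most $t$ vertices, not $t+1$.)
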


\subsection{Extreme values}

\begin{prop}\label{thpda:low}{\rm \cite{product-power-throt}}   Let $G$ be a connected graph of order at least two.
\ben[$(1)$]
\item \label{thpda1} $\thpda(G)=1$ if and only if $\gamma(G)=1$. 
\item  \label{thpda2}  $\thpda(G)=2$ if and only if $G$ satisfies at least one of the following conditions:
\ben[(a)] 
\item \label{i:d=2} $\gamma(G)=2$.  
\item \label{i:pt=2} $\pd(G)=1$ and $\ppt(G)=2$.  
\een
\een
\end{prop}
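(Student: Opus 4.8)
The plan is to exploit the fact that $\thpda(G)$ is always a positive integer realized as $|S|\,\ppt(G;S)$ for some proper subset $S\subsetneq V(G)$, where both factors are positive integers (recall that $\ppt(G;S)\ge 1$ whenever $S\ne V(G)$). Consequently, when $\thpda(G)$ is small the admissible factorizations are severely restricted, and each factorization translates into a structural condition on $G$ via the definitions of $\pd$, $\gamma$, and $\ppt$ together with Observation~\ref{o:basic-bds} and Proposition~\ref{basic}. The existence of a realizing set is guaranteed since $\ppt(G,n-1)=1$ for a connected graph of order $n\ge2$.

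For Part~(1) I would argue both directions directly. If $\gamma(G)=1$, a dominating vertex $v$ satisfies $N[\{v\}]=V(G)$, so $\ppt(G;\{v\})=1$ and $\thpda(G)\le1$; since $\thpda(G)\ge\pd(G)\ge1$ by Observation~\ref{o:basic-bds}, equality holds. Conversely, if $\thpda(G)=1$, a realizing $S$ has $|S|\,\ppt(G;S)=1$ with both factors positive integers, forcing $|S|=1$ and $\ppt(G;S)=1$; then $P^{[1]}(S)=N[S]=V(G)$ exhibits a dominating set of size one, so $\gamma(G)=1$.

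For Part~(2) the forward direction is the factorization argument. If $\thpda(G)=2$, a realizing $S$ gives $|S|\,\ppt(G;S)=2$, so $(|S|,\ppt(G;S))$ is either $(2,1)$ or $(1,2)$. In the first case $N[S]=V(G)$ with $|S|=2$, so $\gamma(G)\le2$; since $\thpda(G)=2\ne1$ forces $\gamma(G)\ne1$ by Part~(1), we obtain $\gamma(G)=2$, i.e.\ condition~(a). In the second case $S=\{v\}$ is a power dominating set with $\ppt(G;\{v\})=2$, so $\pd(G)=1$ and hence $\ppt(G)=\ppt(G,1)\le2$; moreover $\ppt(G,1)=1$ would produce a dominating vertex and thus $\gamma(G)=1$, which is excluded, giving $\ppt(G)=2$, i.e.\ condition~(b). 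For the reverse direction I would bound from both sides. If $\gamma(G)=2$, then $\thpda(G)\le\gamma(G)=2$ by Observation~\ref{o:basic-bds} and $\thpda(G)\ne1$ by Part~(1). If instead $\pd(G)=1$ and $\ppt(G)=2$, then $\thpda(G)\le\pd(G)\ppt(G)=2$, and Proposition~\ref{basic} converts $\ppt(G)=2\ne1$ into $\pd(G)\ne\gamma(G)$, hence $\gamma(G)\ne1$, so again $\thpda(G)\ne1$ by Part~(1). In every case $\thpda(G)=2$.

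The step I expect to require the most care is condition~(b) in both directions, since conditions~(a) and~(b) overlap and one must cleanly separate $\gamma(G)=1$ from $\gamma(G)\ge2$. The essential tool is Proposition~\ref{basic}, which turns the hypothesis $\ppt(G)=2\ne1$ into $\pd(G)\ne\gamma(G)$ and thereby rules out $\gamma(G)=1$; without it the lower bound $\thpda(G)\ge2$ in case~(b) is not immediate.
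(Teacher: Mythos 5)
Your proof is correct and takes essentially the approach the paper itself uses for results of this type: the paper states Proposition~\ref{thpda:low} without proof (citing \cite{product-power-throt}), but its stated method for low product-throttling values in Remark~\ref{r:univ-low}---set $\thpda(G)=t$ and factor $t$ into (size of set)$\times$(propagation time)---is precisely your factorization into $(2,1)$ or $(1,2)$. Your handling of the two directions, using Observation~\ref{o:basic-bds} for the upper bounds and Proposition~\ref{basic} to rule out $\gamma(G)=1$ in case~(b), is sound and matches the paper's toolkit.
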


A construction for creating any connected graph $G$ with $\pd(G)=1$ and $\ppt(G)=2$  appears  in \cite{product-power-throt}. 

\begin{thm}{\rm \cite[Theorem 2.1]{dom-book}}\label{half}
Let $G$ be a graph of order $n$ having no isolated vertices. Then $\gamma (G) \leq \lf\frac n 2\rf$.
\end{thm}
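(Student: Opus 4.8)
The plan is to use Ore's classical argument that, when $G$ has no isolated vertices, the complement of a \emph{minimal} dominating set is again a dominating set. I would start by letting $D$ be a minimal dominating set of $G$, meaning $D$ is dominating but $D\setminus\{v\}$ fails to be dominating for every $v\in D$. The claim to establish is that $V(G)\setminus D$ is also a dominating set. Granting this, the two dominating sets $D$ and $V(G)\setminus D$ partition $V(G)$, so their cardinalities sum to $n$, whence $\gamma(G)\le\min\{|D|,|V(G)\setminus D|\}\le\lf\frac n2\rf$.

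To establish the claim, I would show that every $v\in D$ has a neighbor in $V(G)\setminus D$; since each vertex of $V(G)\setminus D$ dominates itself, this suffices to conclude $N[V(G)\setminus D]=V(G)$. Fix $v\in D$. By minimality of $D$, the set $D\setminus\{v\}$ is not dominating, so there exists a vertex $u\in N[v]$ that lies in no $N[w]$ for $w\in D\setminus\{v\}$; this $u$ is a \emph{private neighbor} of $v$. The argument then splits according to whether $u=v$.

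If $u\ne v$, then $u\notin D$ (otherwise $u$ would dominate itself from within $D\setminus\{v\}$), so $u\in V(G)\setminus D$ is a neighbor of $v$, as desired. If $u=v$, then $v$ has no neighbor in $D$ at all; here I would invoke the hypothesis that $G$ has no isolated vertices to produce a neighbor $w$ of $v$, which necessarily lies in $V(G)\setminus D$ and dominates $v$. In either case $v$ is dominated by a vertex of $V(G)\setminus D$, which completes the claim and hence the proof.

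I expect the private-neighbor case analysis to be the only real subtlety, and the case $u=v$ is precisely where the no-isolated-vertices assumption is indispensable: without it, a vertex of $D$ could be isolated and thus impossible to dominate from outside $D$, and the bound genuinely fails (for instance $G=K_1$). Everything else is bookkeeping about the partition of $V(G)$ into $D$ and its complement.
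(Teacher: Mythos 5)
Your proof is correct: it is the classical Ore argument (the complement of a minimal dominating set in an isolate-free graph is again a dominating set, so $D$ and $V(G)\setminus D$ partition $V(G)$ and the smaller of the two has size at most $\lfloor n/2\rfloor$), and the private-neighbor case analysis --- including the case $u=v$, where the no-isolated-vertices hypothesis is genuinely needed --- is handled correctly. Note that the paper itself gives no proof of this statement; it is quoted from the domination monograph of Haynes, Hedetniemi, and Slater (Theorem 2.1 there), and your argument is essentially the standard proof appearing in that cited source, so there is no divergence to report.
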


Theorem~\ref{half} and Observation~\ref{o:basic-bds} provide an upper bound that  for any connected graph $G$ of order $n\ge 2$: $\thpda(G)\le \gamma(G)\le \frac{n}{2}$.  

For any graph $H$, define the {\em corona} of $H$ with $K_1$,  denoted by $H\circ K_1$, to be the graph obtained from $H$ by appending a leaf to each vertex of $H$. The next result will be used in Section \ref{sfamilies} and was used in \cite{product-power-throt} to characterize graphs having  $\thpda(G)=\frac{n}{2}$ (see Theorem \ref{half-ratio-iff}).

\begin{thm}\label{half-ratio}{\rm \cite{product-power-throt}} 
If $H$ is a connected graph of order at least two and $G=H\circ K_1$, then $\thpda(G)=2\gamma(H)$.
Furthermore, any power dominating set for $G$ that is a subset of $V(H)$ 
must be a dominating set for $H$.
\end{thm}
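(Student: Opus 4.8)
The plan is to establish both assertions together, writing $n_H=|V(H)|$ (so $|V(G)|=2n_H$) and letting $v'$ denote the pendant leaf appended to $v\in V(H)$, so that $N_G[v']=\{v,v'\}$ and $N_G[v]=N_H[v]\cup\{v'\}$. The ``furthermore'' statement is the real engine, so I would prove it first and then harvest the lower bound from it.

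\textbf{Upper bound.} I would take $S=D$, a minimum dominating set of $H$ regarded as a subset of $V(H)\subsetneq V(G)$, so $|D|=\gamma(H)$. Since $D$ dominates $H$, the domination step gives $P^{[1]}(D)=N_G[D]=V(H)\cup\{v':v\in D\}$, leaving white exactly the leaves $v'$ with $v\notin D$. In the next round each $v\in V(H)$ has all of its $H$-neighbors already blue, so its only possible white neighbor is $v'$; hence every remaining leaf is forced in round $2$ and $P^{[2]}(D)=V(G)$. Because $H$ is connected of order at least two, Theorem~\ref{half} gives $\gamma(H)\le n_H/2<n_H$, so $D\subsetneq V(H)$ and some leaf is still white after round $1$; thus $\ppt(G;D)=2$ exactly and $\thpda(G)\le|D|\,\ppt(G;D)=2\gamma(H)$.

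\textbf{The ``furthermore'' claim.} Suppose $S\subseteq V(H)$ power dominates $G$ but does not dominate $H$, and set $A=V(H)\setminus N_H[S]\ne\emptyset$ and $A'=\{a':a\in A\}$. Every vertex of $A\cup A'$ is white after the domination step (a leaf $a'$ with $a\notin S$ is undominated). The crux is to show that \emph{no} force can ever color a vertex of $A\cup A'$; I would do this by examining a hypothetical first such force $x\to w$. If $w=a'$ then necessarily $x=a\in A$, which is still white, a contradiction. If $w=a\in A$, then $x\in N_H(a)\cup\{a'\}$; the choice $x=a'$ is impossible since $a'$ is white, and if $x=w_0\in N_H(a)$ then $w_0\notin S$ (as $a\in A$ is not adjacent to $S$), so $w_0'$ is white until $w_0$ forces it—but $w_0$ can force $w_0'$ only after $a$ is blue, contradicting that $x\to a$ is the \emph{first} force into $A\cup A'$. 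Hence $A\cup A'$ stays white forever, contradicting that $S$ power dominates $G$.

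\textbf{Lower bound.} I would first reduce to initial sets inside $V(H)$: given any power dominating $S\subsetneq V(G)$, replace each leaf $v'\in S$ by its support $v$ to obtain $S^*\subseteq V(H)$ with $|S^*|\le|S|$ and $N_G[S^*]\supseteq N_G[S]$. By the standard monotonicity of the zero forcing color change rule (a larger blue set remains at least as large after every round), $P^{[i]}(S^*)\supseteq P^{[i]}(S)$ for all $i\ge1$, so $S^*$ is again power dominating with $\ppt(G;S^*)\le\ppt(G;S)$, and its product parameter is no larger. For $S^*\subseteq V(H)$ there are two cases: if $S^*=V(H)$ then $\ppt=1$ and the product is $n_H\ge2\gamma(H)$ by Theorem~\ref{half}; otherwise the ``furthermore'' claim forces $|S^*|\ge\gamma(H)$, while any $v\in V(H)\setminus S^*$ leaves $v'$ white after round $1$ so that $\ppt(G;S^*)\ge2$, giving product at least $2\gamma(H)$. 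Combining the reduction with the upper bound yields $\thpda(G)=2\gamma(H)$. I expect the main obstacle to be the non-forcing case analysis underlying the ``furthermore'' claim, since the factor of $2$ in the lower bound depends precisely on simultaneously knowing that the initial set must dominate $H$ \emph{and} that at least one extra forcing round is unavoidable; the monotonicity reduction and the upper-bound computation are then routine.
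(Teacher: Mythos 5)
Your proof is correct. One thing to be aware of: this paper does not actually prove Theorem \ref{half-ratio} --- it is imported from \cite{product-power-throt} without proof --- so the only in-paper argument to measure against is the proof of Theorem \ref{less-than-n}, which establishes the analogous statement $\thpdx(H\circ K_1)=3\gamma(H)$ for the initial-cost parameter. Your route is essentially that one: reduce to initial sets inside $V(H)$ by replacing each leaf with its support, show that such a set must dominate $H$, and pair $|S^*|\ge\gamma(H)$ with $\ppt(G;S^*)\ge 2$. What you supply beyond the paper is precisely the content it asserts without justification. First, the leaf-to-support replacement: the paper's standing monotonicity hypothesis is only for set inclusion ($S'\subseteq S$ implies $\ppt(G;S)\le\ppt(G;S')$), which does not apply here since in general $S\not\subseteq S^*$; your round-by-round containment argument from $N_G[S^*]\supseteq N_G[S]$ is the correct repair. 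Second, and more importantly, the ``furthermore'' claim: the proof of Theorem \ref{less-than-n} simply states ``The set $S$ must be a dominating set of $H$ in order to be a power dominating set of $G$,'' whereas your first-force-into-$A\cup A'$ analysis --- in which $a$ and the leaf $w_0'$ of any potential forcer $w_0\in N_H(a)\setminus S$ block each other as mutual white neighbors --- actually proves it. You also correctly handle the boundary case $S^*=V(H)$ via Theorem \ref{half}, which is needed because the ``furthermore'' claim gives nothing there. The only inessential slack is in the upper bound, where $\ppt(G;D)\le 2$ already suffices, so the exactness discussion can be dropped.
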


\begin{thm}\label{half-ratio-iff} {\rm \cite{product-power-throt}} 
Let $G$  be a connected graph of order $n\ge 2$. Then  $\thpda(G)=\frac n 2$ if and only if $G=(H\circ K_1)\circ K_1$ for some connected graph $H$,  $G=C_4\circ K_1$, or $G=C_4$.   \end{thm}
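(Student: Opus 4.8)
The plan is to show that the equality $\thpda(G)=\frac n2$ collapses, via the chain $\thpda(G)\le\gamma(G)\le\frac n2$, onto the classical list of connected graphs whose domination number is exactly half their order, and then to strip one corona layer off $G$ using Theorem~\ref{half-ratio}.

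First I would record the reduction. Since $G$ is connected of order $n\ge 2$, it has no isolated vertices, so Theorem~\ref{half} gives $\gamma(G)\le\frac n2$; together with Observation~\ref{o:basic-bds}(2) this yields $\thpda(G)\le\gamma(G)\le\frac n2$. Consequently $\thpda(G)=\frac n2$ forces $\gamma(G)=\frac n2$ (and simultaneously $\thpda(G)=\gamma(G)$), so only the connected graphs attaining $\gamma=\frac n2$ can occur. For these I would invoke the classical characterization of Payan--Xuong and Fink--Jacobson--Kinch--Roberts: a connected graph $F$ of order at least two satisfies $\gamma(F)=\frac{|V(F)|}2$ if and only if $F\cong C_4$ or $F\cong H\circ K_1$ for some connected graph $H$. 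Thus $G=C_4$ or $G=F\circ K_1$ for some connected $F$.

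Next I would decide, for each candidate, whether $\thpda$ is actually $\frac n2$. For $G=C_4$, Observation~\ref{p:pathcycle} gives $\thpda(C_4)=\lceil 4/3\rceil=2=\frac n2$, so $C_4$ qualifies. For $G=F\circ K_1$ with $F$ connected of order at least two, Theorem~\ref{half-ratio} gives $\thpda(G)=2\gamma(F)$; since $n=2|V(F)|$, this equals $\frac n2=|V(F)|$ exactly when $\gamma(F)=\frac{|V(F)|}2$. Applying the classical characterization a second time, now to the inner factor $F$, splits this into $F\cong C_4$, which yields $G=C_4\circ K_1$, and $F\cong H\circ K_1$, which yields $G=(H\circ K_1)\circ K_1$. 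These are precisely the three families in the statement, and the converse directions follow by running the same computations backwards, using $\gamma(C_4)=2$ and the standard fact $\gamma(H\circ K_1)=|V(H)|$ to evaluate $\thpda$ through Theorem~\ref{half-ratio}.

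I expect the main obstacle to be the correct bookkeeping of this nested application of the half-order domination theorem, in particular respecting the order-at-least-two hypothesis that Theorem~\ref{half-ratio} requires of the corona factor. The delicate points are: (i) the exceptional graph $C_4$ must be tracked both as a standalone value and as an admissible inner factor $F\cong C_4$, which is exactly why $C_4\circ K_1$ appears as its own family while $C_4$ is not itself a corona; and (ii) the degenerate inner case $F=K_1$ (so that $G=K_2$) falls outside the scope of Theorem~\ref{half-ratio} and must be examined directly. Once these boundary graphs are settled, the two directions of the equivalence assemble immediately from the chain of inequalities and the two invocations of the classical characterization.
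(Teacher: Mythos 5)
Your route is the natural one, and there is nothing in the survey to compare it against line by line: Theorem \ref{half-ratio-iff} is stated here without proof, quoted from \cite{product-power-throt}. Up to one boundary case your argument is complete and correct, using exactly the toolkit the survey supplies: the chain $\thpda(G)\le\gamma(G)\le\frac n2$ (Observation \ref{o:basic-bds}(2) plus Theorem \ref{half}) forces $\gamma(G)=\frac n2$; Theorem \ref{dom-book-2} then restricts $G$ to $C_4$ or to $F\circ K_1$ with $F$ connected; and for $F$ of order at least two, Theorem \ref{half-ratio} together with a second application of Theorem \ref{dom-book-2}, now to $F$, yields precisely the three listed families. The converse is, as you say, a direct computation from Theorem \ref{half-ratio}, $\gamma(C_4)=2$, and $\gamma(H\circ K_1)=|V(H)|$.

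The genuine gap is the case you label as delicate point (ii) and then defer: $F=K_1$, i.e.\ $G=K_2$. You assert that once this boundary graph is ``settled,'' both directions ``assemble immediately,'' but the case does not settle in the direction you need. Since $\gamma(K_2)=1$, Proposition \ref{thpda:low}(1) (or the one-line computation that a single vertex observes all of $K_2$ in the domination step) gives $\thpda(K_2)=1=\frac n2$; yet $K_2$ is none of $(H\circ K_1)\circ K_1$, $C_4\circ K_1$, or $C_4$, since every graph in those three families has order at least four. So $K_2$ satisfies the left-hand side of the equivalence and violates the right-hand side, and no amount of direct examination can remove it within your framework --- indeed, the statement taken at $n=2$ is inconsistent with Proposition \ref{thpda:low}(1) itself. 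What your argument actually establishes is the theorem for all $n\ge 3$ (for odd $n$ both sides fail trivially, $\thpda(G)$ being an integer while the three families all have even order), together with the observation that the statement as transcribed here, with hypothesis $n\ge 2$, has an unremarked exception at $K_2$. A complete write-up must therefore either impose $n\ge 3$ and say so, or explicitly exclude (or list) $K_2$; it cannot leave that case as an unexamined boundary check, because it is precisely where the equivalence breaks.
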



  \section{Universal product throttling}\label{s:prod-univ}

Carlson introduced universal definitions for propagation time and sum throttling for a zero forcing parameter $Y$ in \cite{Josh1}; here we use simplified versions.  Recall that for $S\subseteq V(G)$,  $\ptx(G;S)$  is the least $p$ such that $S^{[p]}=V(G)$, or infinity if $S$ is not a $Y$-forcing set of $G$. For a positive integer $k$, $\ptx(G,k)=\min_{|S|=k}\ptx(G;S)$ and the sum throttling number for $Y$ is
\[\thx(G)=
\min_{Y(G) \le k\le n}(k+\ptx(G,k)).\]
In addition to (standard and PSD) zero forcing, these definitions apply immediately to power domination with $S^{[i]}$ defined to be $P^{[i]}(S)$.  We define $\pt_c(G,k)=\capt_k(G)$ so that the universal notation can also be used for Cops and Robbers. Using the two definitions given for product cop throttling and product power throttling as models, we have two universal definitions for product throttling that apply to the four processes: Cops and Robbers, power domination, standard zero forcing, and PSD zero forcing,
  \[\mbox{(with initial cost) }\ \thxx(G)=\min_{Y(G) \le k\le n}k(1+\ptx(G,k))\]
 and
 \[\mbox{(with no  initial cost) }\  \thxa(G)=\min_{Y(G) \le k< n}k\ptx(G,k),\]
where the definition of $\thxa(G)$ applies only to connected graphs of order at least two.  Notice that  the case $k=n$ is excluded for $\thxa(G)$.
 We also use the related notation $\thxx(G,k)=k(1+\ptx(G,k))$ and $\thxa(G,k)=k\ptx(G,k)$.

In this section, we record some simple consequences of these definitions, including possible low values of product throttling numbers.  But first  we make some comments based on prior results for $\thcx(G)$ and $\thpda(G)$. 
The nature of the application may motivate  the choice of definition (e.g., cops need to move to their positions whereas PMUs are fixed and immediately available).   It seems that $\thxx(G)$ favors a small number of cops/PMUs/blue vertices, whereas  $\thxa(G)$ seems to favor  a small propagation/capture time.  For example,  $\thcx(H)=\thcx(H,1)$ for $H$ chordal, whereas there are many graphs with $\thpda(G)=\gamma(G)$.  Note that $\gamma(G)$ is the smallest number of vertices that power dominate $G$ in one round. Just as we have defined $\ptx(G,k)$ to be the minimum possible propagation time using  $k$ vertices, it is useful to record the minimum number of vertices that can be used to achieve propagation time $p$ more generally:  Define $\kx(G,p)=\min\{k:\ptx(G,k)=p\}$. With this definition, we expect $\thxa(G)=\kx(G,1)$ to be common, although it may be written as $\gamma(G)$, because $k_{\pd}(G,1)=k_c(G,1)=\gamma(G)$.   

For the future,  it would be desirable to consider the effect of  initial cost values other than one in the case of product throttling with initial cost, i.e. to consider product throttling with with  variable initial cost
\[ \thxx(G,\omega)=\min_{Y(G) \le k\le n}k(\omega+\ptx(G,k)).\]

The results listed below assume that for all graphs $G$ of order $n$, we have  $1\le Y(G)\le n$,   $\ptx(G,n)=0$, and $S'\subseteq S$ implies $\ptx(G;S)\le\ptx(G;S')$; furthermore, $ Y(G)\le n-1$ and $\ptx(G,n-1)=1$ when  $G$ is connected and $n\ge 2$. 
 These conditions are satisfied by Cops and Robbers, power domination, and both standard and PSD zero forcing.

\subsection{General observations about $\thxx(G)$ and $\thxa(G)$} \label{ss:univ}

We begin with observations that involve  both definitions.

\begin{obs} For any connected graph $G$ of order at least two, $\thxx(G,k)=k+\thxa(G,k)$ and thus $\thxa(G)< \thxx(G)$.
  \end{obs}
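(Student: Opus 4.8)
The plan is to settle the identity by pure algebra and then deduce the strict inequality from it by a short case analysis on an optimal index. For the identity, I would simply expand the definitions: $\thxx(G,k)=k(1+\ptx(G,k))=k+k\,\ptx(G,k)=k+\thxa(G,k)$. This holds for every admissible $k$, and since any admissible $k$ satisfies $k\ge Y(G)\ge 1$, it already yields the pointwise strict bound $\thxa(G,k)=\thxx(G,k)-k<\thxx(G,k)$.

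For the global inequality $\thxa(G)<\thxx(G)$, the point to watch is that the two minima run over \emph{different} ranges: $\thxx(G)$ permits $k=n$ while $\thxa(G)$ does not. So I would fix a minimizer $k^\ast$ with $Y(G)\le k^\ast\le n$ and $\thxx(G)=\thxx(G,k^\ast)$, and split on whether $k^\ast<n$. If $k^\ast\le n-1$, then $k^\ast$ is also admissible for $\thxa$, and the pointwise identity together with $k^\ast\ge 1$ gives $\thxa(G)\le\thxa(G,k^\ast)=\thxx(G)-k^\ast<\thxx(G)$. If instead $k^\ast=n$, I would use the normalization $\ptx(G,n)=0$ to compute $\thxx(G)=\thxx(G,n)=n$, and then exhibit the cheaper competitor $k=n-1$: it is admissible because $G$ connected of order $n\ge 2$ forces $Y(G)\le n-1$, and the standing hypothesis $\ptx(G,n-1)=1$ gives $\thxa(G)\le\thxa(G,n-1)=(n-1)\cdot 1=n-1<n=\thxx(G)$. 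Either way $\thxa(G)<\thxx(G)$.

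The only genuine obstacle is this boundary case $k=n$; the rest is bookkeeping once the pointwise relation is in hand. It is precisely here that the assumed normalizations $\ptx(G,n)=0$ and $\ptx(G,n-1)=1$ (for connected $G$ of order at least two) do the work, by certifying that a strictly cheaper configuration with $k<n$ is always available even when the $\thxx$-optimum is attained at the excluded index $k=n$.
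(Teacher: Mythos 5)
Your proof is correct and is essentially the paper's own (implicit) argument: the paper states this as an unproved Observation, immediate from the definitions $\thxx(G,k)=k(1+\ptx(G,k))$, $\thxa(G,k)=k\ptx(G,k)$ together with the standing assumptions $\ptx(G,n)=0$, $Y(G)\le n-1$, and $\ptx(G,n-1)=1$ for connected $G$ of order $n\ge 2$. Your case split on whether the $\thxx$-minimizer equals $n$, resolved via those normalizations, is exactly the bookkeeping the paper leaves to the reader, so there is nothing to add.
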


 \begin{rem}\label{r:th-rad}  Let $Y$ be one of (standard or PSD) zero forcing, power domination, or Cops and Robbers. Since $\ptx(G,k)\ge \rad_k(G)$, \[\thxx(G)\ge  \min_{Y(G)\le k\le n}k(1+\rad_k(G))\mbox{ and }\thxa(G)\ge  \min_{Y(G)\le k< n}k\rad_k(G)\]
 ($G$ must be connected and of order at least two for $\thxa(G)$ to be defined).  \end{rem}

Next we consider $\thxx(G)$.

 \begin{obs}\label{o:bd-x} For any  graph $G$ of order $n$,  $Y(G)\le  \thxx(G)\le n$ since $\thxx(G,k)=\infty$ for $k<Y(G)$ and $\thxx(G,n)=n$.  If $G$ is connected and $n\ge 2$, then $Y(G)+1\le  \thxx(G)$, because $Y(G)\le n-1$.   \end{obs}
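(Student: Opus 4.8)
The plan is to verify each of the three displayed inequalities directly from the listed structural assumptions on $\ptx$, since the observation is essentially a bookkeeping argument over the range $Y(G)\le k\le n$. I would organize the proof around the two-sided bound for general $G$ and then the sharpened lower bound in the connected case.

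First I would handle the lower bound $Y(G)\le\thxx(G)$. For $k<Y(G)$ there is no $Y$-forcing set of size $k$, so $\ptx(G,k)=\infty$ and hence $\thxx(G,k)=k(1+\ptx(G,k))=\infty$; these terms never attain the minimum and may be discarded, which is exactly why the minimum is taken over $Y(G)\le k\le n$. For each remaining index $k\ge Y(G)$, the assumption $\ptx(G,k)\ge 0$ gives $\thxx(G,k)=k(1+\ptx(G,k))\ge k\ge Y(G)$, and taking the minimum over $k$ yields $\thxx(G)\ge Y(G)$. For the upper bound $\thxx(G)\le n$, I would simply evaluate at $k=n$: using the assumption $\ptx(G,n)=0$ we get $\thxx(G,n)=n(1+0)=n$, so the minimum defining $\thxx(G)$ is at most $n$.

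For the sharpened bound in the connected case with $n\ge 2$, the key input is that $\ptx(G,k)\ge 1$ whenever $k\le n-1$. I would derive this from monotonicity: the assumption $S'\subseteq S\Rightarrow\ptx(G;S)\le\ptx(G;S')$ shows that adjoining a vertex to an optimal set of size $k$ cannot increase the propagation time, so $\ptx(G,k)$ is nonincreasing in $k$; combined with the given boundary value $\ptx(G,n-1)=1$, this gives $\ptx(G,k)\ge\ptx(G,n-1)=1$ for every $k\le n-1$. Now split the minimand by the value of $k$. Since $G$ is connected with $n\ge 2$ we have $Y(G)\le n-1$, so at $k=Y(G)$ the estimate $\ptx(G,Y(G))\ge 1$ applies and gives $\thxx(G,Y(G))=Y(G)(1+\ptx(G,Y(G)))\ge 2Y(G)\ge Y(G)+1$, using $Y(G)\ge 1$. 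For $k\ge Y(G)+1$ we simply have $\thxx(G,k)\ge k\ge Y(G)+1$. Thus every term in the minimum is at least $Y(G)+1$, so $\thxx(G)\ge Y(G)+1$.

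There is no genuinely hard step here; the only point requiring care is the justification that $\ptx(G,k)\ge 1$ for proper initial sets, i.e. that a set of fewer than $n$ vertices cannot already be the whole vertex set and so needs at least one round. I would present this cleanly via the monotonicity assumption and the boundary value $\ptx(G,n-1)=1$ rather than appealing to the meaning of $\ptx$ for a specific process, so that the argument remains uniform across all four parameters (Cops and Robbers, power domination, standard and PSD zero forcing).
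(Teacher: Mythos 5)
Your proof is correct and follows essentially the same route as the paper's own (inline) justification: discard the $k<Y(G)$ terms as infinite, bound $\thxx(G,k)\ge k$ for $k\ge Y(G)$, evaluate at $k=n$ for the upper bound, and in the connected case use $Y(G)\le n-1$ together with $\ptx(G,k)\ge 1$ for $k<n$ to get the extra $+1$. Your derivation of $\ptx(G,k)\ge 1$ for $k\le n-1$ from the monotonicity axiom and the boundary value $\ptx(G,n-1)=1$ is exactly the kind of bookkeeping the paper leaves implicit, so there is nothing to add.
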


 \begin{obs}\label{o:ub-x} For every  graph $G$:
\ben[$(1)$]
\item $\thxx(G)\le Y(G)(1+\ptx(G))$.  
\item $\thxx(G)\le 2\kx(G,1)$.
\een
\end{obs}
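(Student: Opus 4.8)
The plan is to prove each of the two bounds by the same trivial device: since $\thxx(G)=\min_{Y(G)\le k\le n}\thxx(G,k)$ is a minimum of the terms $\thxx(G,k)=k(1+\ptx(G,k))$ over the admissible range $Y(G)\le k\le n$, any single admissible choice of $k$ immediately yields an upper bound. So for each part I would simply exhibit one witness value of $k$, check that it lies in the range $Y(G)\le k\le n$, and evaluate the corresponding term.

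For part $(1)$ I would take $k=Y(G)$. This is admissible because $Y(G)\le Y(G)\le n$ by the standing hypothesis $1\le Y(G)\le n$. Using that the propagation time of $G$ is $\ptx(G)=\ptx(G,Y(G))$ by definition, the witness term is $\thxx(G,Y(G))=Y(G)\bigl(1+\ptx(G,Y(G))\bigr)=Y(G)(1+\ptx(G))$, and hence $\thxx(G)\le Y(G)(1+\ptx(G))$.

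For part $(2)$ I would take $k=\kx(G,1)$. If no set achieves propagation time exactly one, then $\kx(G,1)=\infty$ and the inequality is vacuous, so assume $\kx(G,1)$ is finite. By the definition $\kx(G,p)=\min\{k:\ptx(G,k)=p\}$ we have $\ptx(G,\kx(G,1))=1$; in particular $\ptx(G,\kx(G,1))<\infty$, so some set of size $\kx(G,1)$ is a $Y$-set (forcing set, power dominating set, or capturing placement), which forces $\kx(G,1)\ge Y(G)$, and clearly $\kx(G,1)\le n$. Thus $k=\kx(G,1)$ is admissible, and the witness term is $\thxx(G,\kx(G,1))=\kx(G,1)(1+1)=2\kx(G,1)$, giving $\thxx(G)\le 2\kx(G,1)$.

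There is no substantive obstacle here: both inequalities are direct consequences of $\thxx(G)$ being defined as a minimum, and the only point requiring any care is verifying that the chosen witnesses $k=Y(G)$ and $k=\kx(G,1)$ genuinely fall in the admissible range $Y(G)\le k\le n$ so that they appear as terms in that minimum. This verification uses only the standing assumptions $1\le Y(G)\le n$ and the interpretation of $\ptx(G;S)=\infty$ when $S$ is not a $Y$-set (to deduce $\kx(G,1)\ge Y(G)$), together with the convention that $\kx(G,1)=\infty$ makes part $(2)$ vacuous when propagation time one is unattainable.
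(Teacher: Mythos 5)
Your proposal is correct and is exactly the argument the paper leaves implicit: the statement appears as an unproved Observation, justified by substituting the witnesses $k=Y(G)$ (using $\ptx(G)=\ptx(G,Y(G))$) and $k=\kx(G,1)$ (using $\ptx(G,\kx(G,1))=1$) into the minimum defining $\thxx(G)$. Your additional care about admissibility of the witnesses and the vacuous case $\kx(G,1)=\infty$ is sound and only strengthens the write-up.
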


  \begin{rem}  For any graph $G$ of order $n$,  $Y(G)\ge \frac n 2$ implies $\thxx(G)=n$: If $Y(G)\le |S|<n$, then $\ptx(G;S)\ge 1$ so $1+ \ptx(G;S)\ge 2$ and $|S|(1+ \ptx(G;S))\ge n$. \end{rem}

 The previous remark is not very useful for connected graphs when $Y(G)\le \gamma(G)$, including Cops and Robbers and power domination,  since $\gamma(G)\le \frac n 2$ for a connected graph of order $n\ge 2$ \cite[Theorem 2.1]{dom-book}.  
 However, it can be useful  for other parameters (and for all parameters when disconnected graphs are considered). 

Remark \ref{r:cr-sum=prod}, which relates sum and product cop throttling, is valid more generally for $\thxx(G)$.

\begin{rem}\label{r:sum=prod}
Let $G$ be a graph of order $n$.  Since $k\ge 1$,  
\[\thxx(G,k)=k(1+ \ptx(G,k))\ge k+\ptx(G,k)=\thx(G,k),\] so $  \thxx(G)\ge \thx(G)$.  Since we have equality exactly when $k=1$ or $\ptx(G,k)=0$, $\thxx(G)=\thx(G)$ if and only if $\thx(G)=\thx(G,1)$ or $\thx(G)=\thx(G,n)$.    
\end{rem}

 Finally we consider $\thxa(G)$.
 
   \begin{obs}\label{o:thxa-bds} For any connected graph $G$ of order $n\ge 2$,  $Y(G)\le  \thxa(G)\le \kx(G,1)\le n-1$.  Furthermore, $\thxa(G)\le Y(G)\ptx(G)$ and $ \thxa(G)=Y(G)$ if and only if $\ptx(G)=1$.
  \end{obs}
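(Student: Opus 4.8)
The plan is to establish every inequality by evaluating $\thxa(G,k)=k\,\ptx(G,k)$ at a few well-chosen values of $k$, relying throughout on the single observation that $\ptx(G,k)\ge 1$ whenever $Y(G)\le k<n$. This holds because a set $S$ with $|S|=k<n$ is a proper subset of $V(G)$, and in a connected graph of order $n\ge 2$ a proper initial set leaves at least one vertex uncolored, so at least one round is needed; hence $\ptx(G;S)\ge 1$ for every such $S$, and thus $\ptx(G,k)\ge 1$.

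For the lower bound I would note that for every admissible $k$ (that is, $Y(G)\le k<n$) we have $\thxa(G,k)=k\,\ptx(G,k)\ge k\ge Y(G)$, and taking the minimum over $k$ gives $\thxa(G)\ge Y(G)$. For the upper bounds I would substitute particular feasible values of $k$ into the minimum defining $\thxa(G)$. Taking $k=\kx(G,1)$ is legitimate because $\ptx(G,k)=1$ is finite and so forces $k\ge Y(G)$, while the assumed value $\ptx(G,n-1)=1$ forces $\kx(G,1)\le n-1<n$; this substitution yields $\thxa(G)\le\kx(G,1)\cdot 1=\kx(G,1)$, and the inequality $\kx(G,1)\le n-1$ comes for free. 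Taking $k=Y(G)$ (admissible since $Y(G)\le n-1<n$ for connected $G$) yields $\thxa(G)\le Y(G)\,\ptx(G,Y(G))=Y(G)\,\ptx(G)$.

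For the equivalence, the reverse direction is immediate: if $\ptx(G)=1$ then the substitution $k=Y(G)$ gives $\thxa(G)\le Y(G)$, which together with the lower bound forces equality. For the forward direction I would argue contrapositively, first recording that $\ptx(G)\ge 1$ always (again because $Y(G)<n$ makes the realizing $Y$-set proper), so the only alternative to $\ptx(G)=1$ is $\ptx(G)\ge 2$. In that case $\thxa(G,Y(G))\ge 2Y(G)>Y(G)$, and for each larger admissible $k$ we have $\thxa(G,k)\ge k>Y(G)$; since every term of the minimum strictly exceeds $Y(G)$, we conclude $\thxa(G)>Y(G)$.

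I do not expect a substantive obstacle, since the result is an assembly of direct substitutions into a minimum. The one point meriting care is the justification that $\ptx(G,k)\ge 1$ for proper sets, which is the engine behind both the lower bound and the forward direction of the equivalence, and is exactly where the hypotheses of connectedness and order at least two are used.
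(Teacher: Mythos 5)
Your proof is correct. The paper states this as an observation without proof, and your argument---evaluating the minimum defining $\thxa(G)$ at the admissible values $k=\kx(G,1)$ and $k=Y(G)$, plus the fact that $\ptx(G,k)\ge 1$ whenever $Y(G)\le k<n$---is precisely the immediate verification the authors intended from their standing assumptions that $Y(G)\le n-1$ and $\ptx(G,n-1)=1$ for connected graphs of order at least two.
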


 The next remark could be stated more generally, but we present it in a form we find useful in characterizing graphs with high $\thxa(G)$.
 
 \begin{rem}\label{r:thxa-induced}  Let $Y$ be one of (standard or PSD) zero forcing, power domination, or Cops and Robbers. Suppose $G'$ is a connected graph  of order $n'\ge 2$ that is an induced subgraph of a connected graph $G$ of order $n$ such that $\thxa(G')=\kx(G',1)$.  Then there is a set $S'\subset V(G')$ such that $\thxa(G')=\thxa(G';S')$ and $\ptx(G';S')=1$. Define $S=S'\cup (V(G)\setminus V(G'))$.  Since $\ptx(G;S)\le \ptx(G';S')=1$, \[\thxa(G)\le  n-n'+\thxa(G').\]
  \end{rem}

For   $\thxa(G)$ there is a relationship with sum throttling.

\begin{prop}\label{p:thx-1} For any connected graph of order $n\ge 2$, $\thxa(G)\ge \thx(G)-1$.  If $\thx(G)=\thx(G,1)=1+\ptx(G,1)$ or $\thx(G)=\thx(G,\kx(G,1))=\kx(G,1)+1$, then $\thxa(G)=\thx(G)-1$.
\end{prop}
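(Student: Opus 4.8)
The plan is to prove the inequality by choosing an optimal index for product throttling and comparing it against the corresponding sum-throttling value, and to settle the two equality cases by exhibiting explicit admissible indices that realize $\thxa(G)=\thx(G)-1$.

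For the inequality $\thxa(G)\ge\thx(G)-1$, first I would fix $k$ with $Y(G)\le k<n$ attaining $\thxa(G)=k\,\ptx(G,k)$, and set $p=\ptx(G,k)$. The key algebraic identity is $kp=(k+p)+(k-1)(p-1)-1$. Since $k\ge Y(G)\ge 1$ and (as I argue next) $p\ge 1$, the term $(k-1)(p-1)$ is nonnegative, so $kp\ge k+p-1$. Because $Y(G)\le k\le n$, the index $k$ is admissible in the definition of $\thx(G)$, giving $\thx(G)\le k+p$; combining, $\thxa(G)=kp\ge k+p-1\ge\thx(G)-1$. The one point needing care is $p\ge 1$: from the listed assumption that $S'\subseteq S$ implies $\ptx(G;S)\le\ptx(G;S')$, the function $k\mapsto\ptx(G,k)$ is nonincreasing (extend an optimal set of size $k$ by one vertex at a time), so $\ptx(G,k)\ge\ptx(G,n-1)=1$ for every $k\le n-1$; in particular $p\ge 1$ since $k<n$.

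For the equality cases the lower bound above already supplies $\thxa(G)\ge\thx(G)-1$, so in each case it suffices to produce a single admissible index realizing the matching upper bound. If $\thx(G)=\thx(G,1)=1+\ptx(G,1)$, then $\ptx(G,1)<\infty$ forces $Y(G)=1$, so $k=1$ is admissible for $\thxa$ (as $1<n$), and $\thxa(G)\le\thxa(G,1)=\ptx(G,1)=\thx(G)-1$. If instead $\thx(G)=\thx(G,\kx(G,1))=\kx(G,1)+1$, write $k_1=\kx(G,1)$, so $\ptx(G,k_1)=1$; then $k_1$ is admissible, since $\ptx(G,k_1)<\infty$ gives $Y(G)\le k_1$, and $\ptx(G,n-1)=1$ shows $n-1$ lies in the set defining $\kx(G,1)$, whence $k_1\le n-1<n$. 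Thus $\thxa(G)\le\thxa(G,k_1)=k_1\cdot 1=\thx(G)-1$, and equality follows in both cases.

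The computation itself is short, so I expect the only genuine obstacles to be bookkeeping: verifying $\ptx(G,k)\ge 1$ for $k<n$ (handled by the monotonicity of $\ptx(G,k)$ in $k$ together with $\ptx(G,n-1)=1$) and checking that the chosen indices $k=1$ and $k=\kx(G,1)$ actually lie in the admissible range $Y(G)\le k<n$ for $\thxa$. I do not anticipate any deeper difficulty.
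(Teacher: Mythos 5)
Your proof is correct and follows essentially the same route as the paper's: the lower bound rests on the elementary inequality $kp\ge k+p-1$ for integers $k,p\ge 1$ (your identity $kp=(k+p)+(k-1)(p-1)-1$ is the same fact the paper packages as ``$k(t-k)\ge t-1$ for $1\le k\le t-1$'' plus a separate case for $k\ge t$), and the equality cases are settled exactly as in the paper by exhibiting the admissible indices $k=1$ and $k=\kx(G,1)$. Your version is marginally tidier in that working at the optimal $k$ with $p=\ptx(G,k)\ge 1$ avoids the paper's case split over the whole range of $k$, and you make explicit the monotonicity argument for $\ptx(G,k)\ge 1$ when $k<n$ that the paper leaves implicit.
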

\bpf  Let $t=\thx(G)$. Then $\ptx(G,k)\ge t-k$ for all $k=1,\dots,t$. It is straightforward to verify that $k(t-k)\ge t-1$ for all $k=1,\dots,t-1$.  Thus $k\ptx(G,k)\ge \thx(G)-1$ for all $k=1,\dots,t-1$. Furthermore, $k\ptx(G,k)\ge t>\thx(G)-1$ for $k=t,\dots,n-1$.  Thus $\thxa(G)\ge \thx(G)-1$ since  $\thxa(G)=k\ptx(G,k)$ for some $k$ with $1\le k \le n-1$. 

 If $\thx(G)=\thx(G,1)$, then $\thxa(G)\le \thxa(G,1)=\thx(G)-1$.  The argument for $\thx(G)=\thxa(G,\kx(G,1))$ is similar. 
\epf

\subsection{Low values of the product throttling number}\label{s:univ-low}

For low values of $\thxx(G)$ or $\thxa(G)$, results can usually be described using graphs with low values of $Y(G)$ and $\ptx(G)$.  Recall that all graphs are connected and of order at least two.

\begin{rem}\label{r:univ-low} Let $G$ be a connected graph of order $n\ge 2$.  Setting $\thxx(G)=t$ or $\thxa(G)=t$  and factoring $t$ yields the following results for small $t$.
\ben[(a)]
\item\label{x1} No graph of order two or more has $\thxx(G)=1$. 
\item\label{a1} $\thxa(G)=1$ if and only if $Y(G)=1$ and $\ptx(G)=1$.
\item\label{x2} $\thxx(G)=2$ if and only if $Y(G)=1$ and $\ptx(G)=1$. 
\item\label{a2} $\thxa(G)=2$ if and only if 
$G$ satisfies exactly one of the following conditions:
		\ben[(i)]
			\item $Y(G)\le 2$, $\ptx(G,2)=1$, and $\ptx(G,1)> 2$.
			\item $Y(G)=1$ and $\ptx(G,1)=2$.
		\een
\item\label{x3} $\thxx(G)=3$ if and only if $G$ satisfies exactly 
one of the following conditions:  
		\ben[(i)]
			\item $n=3$ and $Y(G)>1$.
			\item $Y(G)=1$ and $\ptx(G,1)=2$.
		\een
\item\label{a3} $\thxa(G)=3$ if and only if
$G$ satisfies exactly  
one of the following conditions:
		\ben[(i)]
			\item $Y(G)\le 3$, $\ptx(G,3)=1$, $\ptx(G,2)>1$, and $\ptx(G,1)> 3$.
			\item $Y(G)=1$, $\ptx(G,1)=3$, and $\ptx(G,2)>1$.
		\een
\item\label{x4} $\thxx(G)=4$ if and only if  $G$ satisfies exactly one of the following conditions:
		\ben[(i)]
			\item $n=4$, $Y(G)>1$, and $\ptx(G,2)>1$.
			\item $Y(G)\le 2$, $\ptx(G,2)=1$, and $\ptx(G,1)> 3$.
			\item $Y(G)=1$ and $\ptx(G,1)=3$.
		\een
\een
For \eqref{x2}, note that $\thxx(G)=\thxx(G,2)$ implies $\ptx(G,2)=0$ and thus $G=K_2$, which is covered by $Y(G)=1$ and $\ptx(G)=1$.
 For \eqref{x3}(ii) and \eqref{x4}(ii), note that  $\ptx(G,1)\ge 2$ implies $n\ge 3$, so $\ptx(G,2)\ge 1$ and $\thxx(G,2)\ge 4$. 
\end{rem}

\begin{obs}
Let $p$ be  a  prime number. 
\ben[$(1)$]
\item If $\thxx(G)=p$, then $Y(G)=1$ and $\ptx(G)=p-1$, or $|V(G)|=p$. 
\item If $\thxa(G)=p$, then $Y(G)=1$ and $\ptx(G)=p$, or $\ptx(G,p)=1$. 
\een
\end{obs}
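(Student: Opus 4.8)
The plan is to exploit the fact that in both definitions the quantity being minimized factors as a product of two positive integers, so that primality of the value forces one factor to be $1$ and the other to be $p$. First I would observe that each minimum is attained: since $\thxx(G)=\min_{Y(G)\le k\le n}k(1+\ptx(G,k))$ ranges over finitely many $k$, there is a minimizing index $k^*$ with $Y(G)\le k^*\le n$ and $k^*\bigl(1+\ptx(G,k^*)\bigr)=p$; similarly for $\thxa(G)$ there is $k^*$ with $Y(G)\le k^*<n$ and $k^*\ptx(G,k^*)=p$. Because $k^*\ge Y(G)$, the propagation time $\ptx(G,k^*)$ is finite, so in each case $p$ is written as an honest product of two positive integers.

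For part $(1)$, since $p$ is prime the only factorizations $k^*\cdot(1+\ptx(G,k^*))=p$ are $k^*=1$ with $1+\ptx(G,1)=p$, or $k^*=p$ with $1+\ptx(G,p)=1$. In the first case $Y(G)\le k^*=1$ forces $Y(G)=1$, whence $\ptx(G)=\ptx(G,Y(G))=\ptx(G,1)=p-1$. In the second case $\ptx(G,p)=0$; here I would invoke the standing assumptions that $\ptx(G,n)=0$ together with monotonicity and $\ptx(G,n-1)=1$, which give $\ptx(G,k)\ge 1$ for every $k<n$, so $\ptx(G,k)=0$ forces $k=n$ and hence $|V(G)|=p$. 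Part $(2)$ is parallel: the factorization $k^*\ptx(G,k^*)=p$, with $\ptx(G,k^*)\ge 1$ because $k^*<n$, yields either $k^*=1$ and $\ptx(G,1)=p$, giving $Y(G)=1$ and $\ptx(G)=p$, or $k^*=p$ and $\ptx(G,p)=1$.

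The argument is essentially a divisibility count, so I expect no genuine obstacle; the only points needing care are bookkeeping ones. I would make sure the two factors are positive integers (finiteness of $\ptx$ on the minimizing index, and strict positivity of $\ptx(G,k^*)$ in the no-initial-cost case, where $k^*<n$), and I would nail down the implication $\ptx(G,k)=0\Rightarrow k=n$ from the listed hypotheses rather than treat it as obvious, since it is exactly this fact that converts the degenerate factor $1+\ptx(G,p)=1$ into the conclusion $|V(G)|=p$ in part $(1)$. The identity $\ptx(G)=\ptx(G,Y(G))$ and the reduction of $Y(G)\le 1$ to $Y(G)=1$ (using $Y(G)\ge 1$) then complete the translation from the minimizing index back to the stated graph invariants.
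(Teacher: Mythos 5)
Your proposal is correct and follows essentially the same route as the paper, which presents this as an immediate consequence of the factoring argument used for low values (Remark \ref{r:univ-low}): primality forces the minimizing index $k^*$ to be $1$ or $p$, and the degenerate factor $1+\ptx(G,p)=1$ is converted to $|V(G)|=p$ exactly as you describe. Your extra care in deriving $\ptx(G,k)=0\Rightarrow k=n$ from monotonicity and $\ptx(G,n-1)=1$ is a detail the paper leaves implicit, but it is the same argument.
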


\section{Product throttling for standard zero forcing}\label{s:prod-Z}

For any set $S\subseteq V(G)$, at most $|S|$ forces can be performed in each round, so $\frac {n-|S|}{|S|}   \le  \pt(G;S)$.  This implies for $1\le k\le n$, \beq\frac {n-k}k   \le  \pt(G,k).\label{eq:Zbd}\eeq 
This fundamental bound is the reason that the initial cost version of
product throttling for  standard zero forcing, defined by $\thzx(G):=\min_{S\subseteq V(G)}|S| (1+ \pt(G;S))$, is not interesting.  

\begin{rem}\label{r:thzx-boring} For any graph $G$ of order $n$, $\min_{S\subseteq V(G)}|S| (1+ \pt(G;S))=n$, which is achieved by coloring all vertices blue, because  
 \[  k (1+ \pt(G,k))\ge k \lp1+ \frac {n-k}{k}\rp= n\]
 by \eqref{eq:Zbd}.
\end{rem}

\subsection{Characterization of $\thza(G)$}
Next we consider the version of product throttling for standard zero forcing that has no initial cost and show that $\thza(G):=\min_{\Z(G)\le k<n} k\pt(G,k)$ is the least $k$ such that $\pt(G,k)=1$.  First we need to define some terms  and prove a lemma.  Given a standard zero forcing set $S$ and a propagating set  of forces $\F$, a {\em forcing chain}  is a maximal sequence of vertices $(v_1,v_2,\dots,v_s)$ such that for $i=1,\dots,s-1$,  $v_i \to v_{i+1}$. The set of vertices of a forcing chain  necessarily induces a path. A \emph{reversal}   of $S$ is the set of last vertices of the zero forcing chains of a propagating set of forces $\F$, i.e., the vertices that do not perform forces.  Note that a set $S$ often has more than one set of forces, but a given propagating set of forces $\F$ has a unique set  of forcing chains that thus defines one reversal of $S$. Any reversal of $S$ can be denoted by $rev(S)$.
  Note that the cardinality of $rev(S)$ is the same as the cardinality of $S$.  
It is well-known that  if $S$ is  a  zero forcing set of $G$, then any reversal of $S$ is also a zero forcing set of $G$ \cite{proptime}.

 \begin{lem}\label{rev-lem} Let $G$ be a graph, $S$ a zero forcing set of $G$, and let $t=\pt(G;S)$. 
 Then for any reversal of $S$ and $i=0,\dots,t$, 
 \[  S^{(t-i)}\subseteq (rev(S))^{[i]}.\] 
 \end{lem}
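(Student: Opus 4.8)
The plan is to recast the set containment as a statement about the round function $\rd$ and then prove a strengthened version by induction on $i$. Fix a propagating set of forces $\F$ with $\pt(G;S)=t$, together with its forcing chains and the induced reversal $rev(S)$; recall that $\rd(v)=k$ exactly when $v\in S^{(k)}$, that $\rd$ is strictly increasing along each forcing chain, that every chain begins at a vertex of $S$ (round $0$) and ends at a vertex of $rev(S)$, and that reversing each chain yields a valid propagating set of forces for $rev(S)$ \cite{proptime}. For a vertex $v$ that is not the end (respectively, start) of its chain, write $v^+$ (respectively, $v^-$) for its chain-successor (respectively, predecessor), so that $v\to v^+$ and $v^-\to v$. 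Since $S^{(t-i)}\subseteq\{v:\rd(v)\ge t-i\}$, it suffices to prove $(C_i)$: $\{v:\rd(v)\ge t-i\}\subseteq(rev(S))^{[i]}$ for $i=0,\dots,t$.

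I would prove $(C_i)$ by strong induction on $i$, simultaneously with the auxiliary statement $(Q_i)$: for every $w$ with $\rd(w)\ge t-i$ and every neighbor $u$ of $w$ satisfying $\rd(u)<\rd(w)$ and $u\ne w^-$, one has $u\in(rev(S))^{[i]}$. The base case $i=0$ is direct: a vertex of round $t$ forces nothing (a successor would have round exceeding $t$), so it is a chain-end lying in $rev(S)=(rev(S))^{[0]}$, giving $(C_0)$; and if $w$ has round $t$ and $u\ne w^-$ is a neighbor with $\rd(u)<t$, then $u$ can have no successor (that successor would have round exceeding $t$), so $u$ too is a chain-end in $rev(S)$, giving $(Q_0)$.

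For the inductive step I consider, for $(C_i)$, a vertex $v$ with $\rd(v)=t-i$. If $v$ is a chain-end we are done, so assume $v\to w:=v^+$; then $\rd(w)\ge t-(i-1)$, so $w\in(rev(S))^{[i-1]}$ by $(C_{i-1})$. The reversed force is $w\to v$, and it fires by round $i$ once every neighbor of $w$ other than $v=w^-$ lies in $(rev(S))^{[i-1]}$: a forward-neighbor has round $\ge\rd(w)\ge t-(i-1)$ and lies there by $(C_{i-1})$, while a back-neighbor distinct from $w^-$ lies there by $(Q_{i-1})$; hence $v\in(rev(S))^{[i]}$. The verification of $(Q_i)$ runs in the same fashion: for $u$ as in $(Q_i)$ that is not already a chain-end, its successor $u^+$ differs from $w$ and satisfies $\rd(u^+)\ge t-(i-1)$, so $u^+\in(rev(S))^{[i-1]}$; the reversed force $u^+\to u$ then fires by round $i$ because every neighbor of $u^+$ other than $u=(u^+)^-$ lies in $(rev(S))^{[i-1]}$, again by splitting into forward-neighbors (handled by $(C_{i-1})$) and back-neighbors distinct from $(u^+)^-$ (handled by $(Q_{i-1})$).

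The main obstacle, and the reason $(C_i)$ does not induct by itself, is the unique-white-neighbor check for the reversed force into $v$: a neighbor of $w$ can have a small forward round, so it need not lie in $\{x:\rd(x)\ge t-(i-1)\}$, yet it must already be blue in the reverse process for $w\to v$ to fire on time. The auxiliary statement $(Q_i)$ is designed precisely to certify that such low-round cross neighbors are nonetheless forced early in the reverse process through their own chains, and excluding the chain-predecessor $w^-$ throughout is exactly what keeps both the base case and the recursion of $(Q_i)$ valid.
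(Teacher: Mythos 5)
Your proof is correct, and its skeleton matches the paper's: induct on $i$, and for a vertex $v$ with $\rd(v)=t-i$ that forces $w=v^{+}$, show that $w$ and every neighbor of $w$ other than $v$ are already in $(rev(S))^{[i-1]}$, so that the reversed force $w\to v$ fires by round $i$. The genuine difference lies in how the two arguments certify the troublesome neighbors of $w$, namely those whose forward round is \emph{smaller} than $\rd(w)$. The paper dispatches them in one line: such a vertex $v'$ either performs no force (hence lies in $rev(S)$) or performs its force at a round later than the force $w\to v^{+}$, and "in either case" $v'\in (rev(S))^{[i-1]}$. The second case is exactly the content of your auxiliary statement $(Q_{i-1})$, and it does \emph{not} follow from the paper's induction hypothesis alone: such a low-round neighbor may lie in $S^{(\ell)}$ with $\ell\le t-i$ (it could even lie in $S$), about which the hypothesis $S^{(t-j)}\subseteq (rev(S))^{[j]}$ for $j<i$ says nothing; justifying it requires the descending recursion through forcing chains that your $(Q_i)$ packages into a simultaneous induction. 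So your two-statement induction is not a detour: it supplies precisely the bookkeeping that closes the paper's terse step, at the cost of a longer argument, and it correctly isolates why the chain-predecessor must be excluded throughout. One small nit: in the $(Q_i)$ step you assert rather than prove that $u^{+}\ne w$ and $\rd(u^{+})\ge t-(i-1)$; both deserve their one-line justifications ($u\ne w^{-}$ rules out $u^{+}=w$, and since $w\ne u^{+}$ is a neighbor of $u$, the force $u\to u^{+}$ can only fire after $w$ is blue, so $\rd(u^{+})>\rd(w)\ge t-i$), which is the same observation you already make in the base case.
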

 \bpf Let $\F$ be a propagating set of forces that produces $rev(S)$.
The result is established by induction on $i$.   For $i=0$, $S^{(t-0)}=S^{(t)}\subseteq rev(S)=(rev(S))^{[0]}$. Assume that  $S^{(t-j)}\subseteq (rev(S))^{[j]}$ for $0\le j <i$.  Let $w\in S^{(t-i)}$. If $w$ does not perform a force in $\F$, then $w\in rev(S)\subseteq (rev(S))^{[i]}$. So assume $w\to u$ in round $t-j>t-i$, so $j<i$.  By the induction hypothesis, $u\in (rev(S))^{[j]}\subseteq (rev(S))^{[i-1]}$. Now consider a neighbor $v\ne w$ of $u$.  Since $v$ does not force $w$ and $w\to u$ in round $t-j$, either $v$ does not force or $v\to x$ in round $t-j'$ with $j'<j$.  In either case, $v\in  (rev(S))^{[i-1]}$.  So if  $w\not\in  (rev(S))^{[i-1]}$, then $u$ can force $w$ in the $i$th round of forcing starting with $rev(S)$.  Thus $w\in  (rev(S))^{[i]}$.
\epf

As in Section \ref{ss:univ}, define $k(G,p)=\min\{|S|:\pt(G;S)=p\}$.  
\begin{thm}\label{t:thza-not-throttle}  For any graph $G$, $\thza(G)$ is the least $k$ such that $\pt(G,k)=1$, i.e., $\thza(G)=k(G,1)$. Necessarily  $k(G,1)\ge\frac n 2$.
\end{thm}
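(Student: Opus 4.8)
### Proof Proposal

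The plan is to establish the identity $\thza(G) = k(G,1)$ by showing two things: that the minimum value of $k \cdot \pt(G,k)$ over admissible $k$ is achieved when $\pt(G,k) = 1$, and that among all $k$ achieving propagation time $1$, the least such $k$ gives the minimizer. The lower bound $k(G,1) \ge \frac{n}{2}$ should follow from a reversibility argument using Lemma~\ref{rev-lem}.

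First I would argue that $\thza(G) = k(G,1)$, i.e., that no product $k \cdot \pt(G,k)$ with $\pt(G,k) \ge 2$ can beat the value obtained at propagation time one. Set $k_1 = k(G,1)$, so that $\thza(G) \le k_1 \cdot 1 = k_1$. For the reverse inequality, I want to show that any $k$ with $k < k_1$ forces $\pt(G,k) \ge 2$ and moreover that $k \cdot \pt(G,k) \ge k_1$. The key leverage is Lemma~\ref{rev-lem}: a zero forcing set $S$ with small propagation time has a reversal $\rev(S)$ of the same size, and the reversal ``undoes'' the propagation in the same number of rounds. In particular, if $\pt(G;S) = 1$, then every vertex not in $S$ is forced in a single round, and reversibility gives tight control. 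I expect the cleanest route is to show directly that for each feasible $k$, we have $k \cdot \pt(G,k) \ge k(G,1)$, so that the minimum over $k$ equals $k(G,1)$ and is attained at $k = k(G,1)$ with $\pt = 1$.

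Next I would prove the bound $k(G,1) \ge \frac{n}{2}$. The idea is that if $S$ is a zero forcing set with $\pt(G;S) = 1$, then by Lemma~\ref{rev-lem} with $t = 1$, we have $S^{(1)} \subseteq (\rev(S))^{[1]}$, and combined with $S^{(0)} = S \subseteq \rev(S) = (\rev(S))^{[0]}$, this says that $\rev(S)$ also forces everything in just one round. Since $S^{(1)} = V(G) \setminus S$ has $n - |S|$ vertices and these must all be forced in one round from $\rev(S)$ (where $|\rev(S)| = |S|$), the one-round forcing constraint---at most $|S|$ forces per round, as in inequality~\eqref{eq:Zbd}---gives $n - |S| \le |S|$, hence $|S| \ge \frac{n}{2}$. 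Taking the minimum over such $S$ yields $k(G,1) \ge \frac{n}{2}$. Indeed, the bound $\frac{n-k}{k} \le \pt(G,k)$ from~\eqref{eq:Zbd} already gives $\pt(G,k) = 1 \Rightarrow n - k \le k$ immediately, so this half is essentially a direct consequence of the fundamental bound.

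The main obstacle I anticipate is the first part: ruling out that some larger propagation time with fewer vertices could yield a smaller product. The naive bound~\eqref{eq:Zbd} only gives $k \cdot \pt(G,k) \ge n - k$, which is weaker than $k(G,1)$ when $k$ is large, so~\eqref{eq:Zbd} alone does not suffice. The real content is the reversibility structure: I expect one must argue that achieving any propagation time $p$ with $k$ vertices forces $k$ to be large enough that $k \cdot p \ge k(G,1)$. The cleanest argument likely invokes the reversal again---a set achieving small propagation time with few vertices, when reversed and combined across rounds, cannot do better than the single-round optimum. Making this comparison precise, rather than relying on the loose counting bound, is where the genuine difficulty lies, and Lemma~\ref{rev-lem} is the tool designed exactly for bridging that gap.
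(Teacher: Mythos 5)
Your overall strategy matches the paper's in outline (reduce to propagation time one; get the $\frac n2$ bound from \eqref{eq:Zbd}), and your second half is complete and correct: $\pt(G,k)=1$ together with \eqref{eq:Zbd} gives $k\ge\frac n2$ immediately, exactly as in the paper, and as you note yourself no reversal argument is needed there. However, the first half --- which is the actual content of the theorem --- has a genuine gap. You state the target inequality $k\,\pt(G,k)\ge k(G,1)$ for every feasible $k$, correctly observe that \eqref{eq:Zbd} is too weak to deliver it, and then defer the real work (``making this comparison precise \dots is where the genuine difficulty lies''). Naming Lemma \ref{rev-lem} as the relevant tool is not the same as producing the argument; as written, your proposal establishes only the trivial direction $\thza(G)\le k(G,1)$.

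The missing idea is a doubling construction. Given $S$ with $|S|=k$ and $t=\pt(G;S)=\pt(G,k)\ge 2$, set $\hat S=S\cup rev(S)$, so $|\hat S|\le 2k$. Lemma \ref{rev-lem} says $S^{(t-i)}\subseteq (rev(S))^{[i]}$, i.e., the reversal recolors the vertices in the opposite chronological order. Hence after $i$ rounds starting from $\hat S$, all of $S^{[i]}$ and all of $\bigcup_{j=0}^{i}S^{(t-j)}$ are blue: forward propagation from $S$ and backward propagation from $rev(S)$ meet in the middle, so $V(G)$ is entirely blue after $\lc\frac{t-1}2\rc$ rounds. This gives $\pt(G,2k)\le\lc\frac{t-1}2\rc\le\frac t2$ and therefore
\[
(2k)\,\pt(G,2k)\le k\,\pt(G,k),
\]
with the propagation time strictly smaller since $t\ge 2$. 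Iterating until the propagation time reaches $1$, say after $j$ doublings, yields
\[
k\,\pt(G,k)\ \ge\ 2^j k\,\pt\!\left(G,2^j k\right)\ =\ 2^j k\ \ge\ k(G,1),
\]
where the last inequality holds by the definition of $k(G,1)$ as a minimum. That chain is precisely the inequality your plan calls for but never proves; without it (or some substitute), the identity $\thza(G)=k(G,1)$ remains unestablished.
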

\bpf Let $k\ge \Z(G)$, let $t=\pt(G,k)$ and let  $S\subsetneq V(G)$ be such that $\pt(G;S)=t$.  If $\pt(G,k)=1$, there is nothing to prove for this $k$, so  suppose $t\ge 2$.  Define $\hat S=S\cup rev(S)$ for some reversal of $S$.  Then for $i=1,\dots,t$, 
\[S^{[i]}\cup \bigcup_{j=0}^{i}S^{(t-j)}\subseteq S^{[i]}\cup (rev(S))^{[i]} \subseteq \hat S^{[i]}\]  by Lemma \ref{rev-lem}.  In particular  (since $t\ge 2$),  
\[V(G)=\bigcup_{j=0}^t S^{(j)}\subseteq \hat S^{\scriptsize\lb\lc \frac{t-1}2\rc\rb}.\]  Thus $\pt(G,2k)\le \lc \frac{t-1}2\rc \le \frac t 2$ and $(2k)\pt(g,2k)\le k\pt(G,k)$.  Apply this repeatedly as needed to show that $\min_{\Z(G)\le k<n} k\pt(G,k)=\min \{k: \pt(G,k)=1\}=k(G,1)$.

If $\pt(G,k)=1$, then $1\ge \frac {n-k}k$, so $k\ge \frac n 2$. \epf

\begin{rem}\label{r:p1} For  a connected graph $G$   of order $n\ge 2$,  $k(G,1)\ge \gamma(G)$ since to achieve $\pt(G;S)=1$, every vertex must be in $S$ or adjacent to a vertex in $S$. However, $k(G,1)$ can be much larger than the domination number.  For example, $k(K_n,1)=n-1$ whereas $\gamma(K_n)=1$. \end{rem}

While the question of determining $k(G,1)=\thza(G)$  is  interesting, it seems more like a  question about a form of domination or zero forcing rather than a throttling question (since there is no balancing of resources and time). However, in the next two sections we  offer characterizations of extreme values of $\thza(G)$.

\subsection{Low values  of $\thza(G)$}

\begin{obs}\label{o:thza-throttle-method} 
By Theorem \ref{t:thza-not-throttle},
$\thza(G) = {\frac {n} 2}$ if and only if $G$ has a zero forcing set $S$ with $\pt(G;S)=1$ and $|S|={\frac {n} 2}$.  Analogously, in the case $n$ is odd, $\thza(G) = {\frac {n+1} 2}$ is equivalent to the existence of a zero forcing set $S$ such that $\pt(G;S)=1$ and $|S|={\frac {n+1} 2}$.
\end{obs}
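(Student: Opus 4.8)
The plan is to reduce everything to the characterization in Theorem~\ref{t:thza-not-throttle}, which identifies $\thza(G)$ with $k(G,1)$, the minimum cardinality of a standard zero forcing set $S$ satisfying $\pt(G;S)=1$, and which simultaneously supplies the lower bound $k(G,1)\ge \frac n2$. Both claimed equivalences then become statements about the value of $k(G,1)$, and each follows by a short squeeze argument combined with the fact that the minimum defining $k(G,1)$ is attained by an actual set.

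For the first equivalence, I would argue as follows. If $\thza(G)=\frac n2$, then by Theorem~\ref{t:thza-not-throttle} we have $k(G,1)=\frac n2$, and by the definition of $k(G,1)$ there is a zero forcing set $S$ with $\pt(G;S)=1$ and $|S|=k(G,1)=\frac n2$, which is the desired set. Conversely, the existence of a zero forcing set $S$ with $\pt(G;S)=1$ and $|S|=\frac n2$ shows $k(G,1)\le \frac n2$; combined with the lower bound $k(G,1)\ge \frac n2$ from Theorem~\ref{t:thza-not-throttle}, this forces $k(G,1)=\frac n2$, and hence $\thza(G)=\frac n2$.

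For the second equivalence, with $n$ odd, the only additional ingredient is integrality. Since $k(G,1)$ is a cardinality, the bound $k(G,1)\ge \frac n2$ with $n$ odd sharpens to $k(G,1)\ge \lceil \frac n2\rceil=\frac{n+1}2$. The same two-sided argument then applies: if $\thza(G)=\frac{n+1}2$, the minimizing set has propagation time $1$ and cardinality $\frac{n+1}2$; conversely, any zero forcing set of propagation time $1$ and cardinality $\frac{n+1}2$ gives $k(G,1)\le \frac{n+1}2$, which together with the sharpened lower bound yields equality.

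There is essentially no substantive obstacle here, since Theorem~\ref{t:thza-not-throttle} does the real work. The only points requiring care are to record that the minimum in $k(G,1)=\min\{|S|:\pt(G;S)=1\}$ is attained, so that the value $\frac n2$ (respectively $\frac{n+1}2$) is realized by a genuine zero forcing set, and to invoke integrality in the odd case to rule out the half-integer value $\frac n2$.
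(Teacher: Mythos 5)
Your proposal is correct and follows exactly the route the paper intends: the Observation is stated as an immediate consequence of Theorem \ref{t:thza-not-throttle}, and your argument simply unpacks that reference — identifying $\thza(G)$ with $k(G,1)$, using attainment of the minimum in $k(G,1)=\min\{|S|:\pt(G;S)=1\}$ for the forward directions, and using the lower bound $k(G,1)\ge \frac n2$ (sharpened by integrality to $\frac{n+1}2$ when $n$ is odd) for the converses. Nothing is missing; this matches the paper's (implicit) proof.
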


Next we use Observation \ref{o:thza-throttle-method} to show that paths have the minimum possible product throttling number. 

 \begin{prop} \label{e:thza-Pn} For $n\ge 2$, 
 $\thza(P_n)= \begin{cases}
\frac n 2 & \mbox{if $n$ is even} \\
\frac {n+1} 2 & \mbox{if $n$ is odd } 
\end{cases}. $
 \end{prop}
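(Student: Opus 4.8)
The plan is to reduce the whole computation to Observation \ref{o:thza-throttle-method}, which asserts that for $P_n$ it suffices to exhibit a zero forcing set $S$ with $\pt(P_n;S)=1$ and $|S|=\lc\frac n2\rc$ (that is, $|S|=\frac n2$ when $n$ is even and $|S|=\frac{n+1}2$ when $n$ is odd). Producing such a set realizes exactly the value characterized in that observation, and by Theorem \ref{t:thza-not-throttle} it meets the lower bound $\thza(P_n)=k(P_n,1)\ge\frac n2$ (with integrality of $k(P_n,1)$ forcing the rounded value in the odd case). So the entire argument becomes a single construction together with a verification that its propagation time equals one.

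First I would label the path $1,2,\dots,n$ and color vertex $i$ blue precisely when $i\equiv 0$ or $1\pmod 4$; for odd $n$ I would additionally color vertex $n$ blue. With this coloring the white vertices occur in consecutive flanked pairs $\{4j+2,4j+3\}$, each pair preceded by a blue vertex congruent to $1$ and followed by a blue vertex congruent to $0\pmod 4$. A short residue count (exactly two blue labels per block of four consecutive labels, with the final block handled by hand) gives $|S|=\frac n2$ for even $n$ and, after including vertex $n$, $|S|=\frac{n+1}2$ for odd $n$, i.e.\ $|S|=\lc\frac n2\rc$ in both parities.

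The heart of the proof is checking $\pt(P_n;S)=1$, i.e.\ that every white vertex turns blue in the first round. For this I would show that each white vertex is the unique white neighbor of some blue vertex: a white vertex $i\equiv 2\pmod4$ is forced from the left by $i-1\equiv1$ (whose other neighbor $i-2\equiv0$ is already blue), and a white vertex $i\equiv 3\pmod4$ is forced from the right by $i+1\equiv0$ (whose other neighbor $i+2\equiv1$ is already blue). Since every white vertex is forced in round one, $\pt(P_n;S)=1$, which completes the upper bound and hence the theorem.

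The hard part will be the bookkeeping at the two ends of the path together with the parity adjustment. One must verify that the degree-one endpoint vertex $1$ forces vertex $2$, that no white vertex adjacent to the right end is stranded with two white neighbors (which would push the propagation time above one), and that for odd $n$ the single extra blue vertex at position $n$ simultaneously repairs the right boundary (so the last white vertex is forced) and keeps $|S|$ equal to $\lc\frac n2\rc$. These boundary checks split into cases according to $n\bmod 4$, and this case analysis is where the care is required; the interior forcing behavior is uniform and routine.
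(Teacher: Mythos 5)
Your proposal is correct and follows essentially the same route as the paper: the paper's proof also exhibits an explicit set with propagation time one of size $\lc \frac n2 \rc$ (namely $T_n$, the vertices with $i \equiv 2,3 \pmod 4$, augmented by $v_n$ when needed) and invokes Theorem \ref{t:thza-not-throttle} / Observation \ref{o:thza-throttle-method} for the matching lower bound. Your set, using residues $0,1 \pmod 4$, is just the mirror image of the paper's pattern, and your round-one forcing verification and endpoint case analysis match the paper's argument.
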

\bpf Assume $V(P_n)=\{v_1,\ldots,v_{n}\}$ and $E(P_n)=\{v_iv_{i+1}:1\leq i\leq n-1\}$. Define $T_n\subset V(P_n)$ by
 \[T_n= \begin{cases}
\{v_i\in V(P_n): i \equiv 2\!\!\mod 4 \mbox{ or } i \equiv 3\!\!\mod 4 \}  & \mbox{if }n\not\equiv 1\mod 4\\
\{v_i\in V(P_n): i \equiv 2\!\!\mod 4 \mbox{ or } i \equiv 3\!\!\mod 4 \}\cup\{v_n\}  & \mbox{if }n \equiv 1\mod 4\end{cases}. \]
When $n$ is even,  
\[N(v_i)\setminus T_n=\begin{cases}
        v_{i-1} & \mbox{ if } i \equiv 2\!\!\mod 4\\
        v_{i+1} & \mbox{ if } i \equiv 3\!\!\mod 4\end{cases}\] for every $v_i\in T_n.$
As a result, each vertex in $T_n$ forces in a single round.  Now suppose  $n$ is odd. The same is true for all vertices  except  $v_n$, which has no neighbor outside $T_n$ if $n\equiv 3\mod n$ and which shares a neighbor with $n_{n-2}$ if $n\equiv 3\mod n$.  Thus propagation time is again one and  $\thza(P_n)=|T_n|=\frac{n+1}2$.  \epf

The next result is a direct consequence of Theorem \ref{t:thza-not-throttle} and Proposition  \ref{e:thza-Pn}.

 \begin{cor} \label{c:thza-lowbound} 
For any graph connected graph $G$ of order $n$, $\thza(G)\geq \Big\lceil {\frac {n} 2}\Big\rceil$ and this bound is tight. 
 \end{cor}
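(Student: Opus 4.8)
The plan is to assemble the statement directly from the two cited results, treating the lower bound and the tightness separately. First I would invoke Theorem \ref{t:thza-not-throttle}, which establishes that $\thza(G)=k(G,1)$, the least $k$ for which $\pt(G,k)=1$, and which already supplies the inequality $k(G,1)\ge \frac n2$. The one observation that must be made explicit is that $k(G,1)$ is the cardinality of a vertex subset, hence a nonnegative integer. An integer that is at least $\frac n2$ is automatically at least the least integer that is $\ge \frac n2$, namely $\lceil \frac n2\rceil$. This upgrades the rational bound $\thza(G)\ge \frac n2$ from Theorem \ref{t:thza-not-throttle} to the desired $\thza(G)\ge \lceil \frac n2\rceil$.

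For tightness I would exhibit the path $P_n$ as the extremal family. By Proposition \ref{e:thza-Pn}, $\thza(P_n)=\frac n2$ when $n$ is even and $\thza(P_n)=\frac{n+1}2$ when $n$ is odd. In each parity this value equals $\lceil \frac n2\rceil$: for even $n$ one has $\lceil \frac n2\rceil=\frac n2$, and for odd $n$ one has $\lceil \frac n2\rceil=\frac{n+1}2$. Hence $P_n$ attains the lower bound for every $n\ge 2$, and the bound is tight.

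The only point requiring any care—and it is hardly an obstacle—is the integrality argument: recognizing that $\thza(G)$ is integer-valued is precisely what permits replacing the bound $\frac n2$ by the ceiling $\lceil \frac n2\rceil$, and it is also what makes the lower bound and the path computation agree exactly in both parities. Everything else follows immediately from the quoted statements, so I expect the proof to be a short two- or three-line deduction rather than a substantive argument.
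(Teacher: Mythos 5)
Your proposal is correct and is exactly the paper's argument: the paper presents this corollary as a direct consequence of Theorem \ref{t:thza-not-throttle} (giving $\thza(G)=k(G,1)\ge \frac n2$, upgraded to the ceiling by integrality of the cardinality $k(G,1)$) and Proposition \ref{e:thza-Pn} (paths attaining $\lceil \frac n2\rceil$ in both parities). Nothing is missing.
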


A characterization of all graphs attaining the lower bound in Corollary \ref{c:thza-lowbound} will be presented after introducing some terminology and obtaining a few preliminary results. We start by recalling the definition of matched-sum graphs, as introduced by Georges and Mauro in \cite{GM02}:
Let $G_1$ and $G_2$ be graphs and let $M$ be a matching between $V(G_1)$ and $V(G_2)$. Then the {\em{$M$-matched-sum of $G_1$ and $G_2$}} (or simply, {\em {$M$-sum of $G_1$ and $G_2$}}), denoted by $G_1M^+G_2$, is the graph with $V(G_1M^+G_2)=V(G_1)\cup V(G_2)$ and $E(G_1M^+G_2)=E(G_1)\cup E(G_2)\cup M$. A graph of the form $G_1M^+G_2$ is also called a \emph{matched-sum graph}.

\begin{ex} \label{e:thza-matchedPn}
A path of even order is an example of a matched-sum graph. We show this by identifying graphs $G_1$, $G_2$ and $M$ such that $G_1M^+G_2$ is isomorphic to $P_n$, for any even integer $n$.  Let $G_1$ be the subgraph of $P_n$ induced by the corresponding zero forcing set $T_n$, as defined in Proposition \ref{e:thza-Pn}. Let $G_2$ be the subgraph of $P_n$ induced by $V(P_n)\setminus T_n$, and let $M$ be the matching defined by the edges used in the propagation where each vertex of $G_1$ forces its only neighbor in $G_2$.  It is straightforward to verify that $G_1M^+G_2=P_n$.  See Figure \ref{fig:path-match}.\vspace{-3pt}
\end{ex}

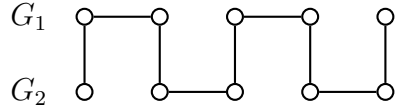
\begin{figure}[h!] \begin{center}
\begin{tikzpicture}[scale = 1]
    \vertex[color=white,label = {right:$G_1$}] (G1) at (-1.25,1){};
    \vertex[color=white,label = {right:$G_2$}] (G2) at (-1.25,0){};
    \foreach \n in {0,...,4}{
        \vertex (a\n) at (\n,0){};
        \vertex (b\n) at (\n,1){}; }
    \foreach \n in {0,...,4}{
        \draw[thick] (a\n) to (b\n);}
    \foreach \m/\n in {1/2,3/4}{
        \draw[thick] (a\m) to (a\n);}
    \foreach \m/\n in {0/1,2/3}{
        \draw[thick] (b\m) to (b\n);}
\end{tikzpicture}
\caption{$P_{10}$ drawn as a matched-sum graph. \vspace{-12pt} \label{fig:path-match}}
\end{center}
\end{figure}
\begin{obs} 
For any two connected graphs $G_1$ and $G_2$, a graph $G_1M^+G_2$ is connected for any matching $M$. However, it is not necessary for $G_1$ and $G_2$ to be connected, to obtain a connected graph matched-sum graph $G_1M^+G_2$ (cf. Example \ref{e:thza-matchedPn}).  
\end{obs}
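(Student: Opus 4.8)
The plan is to prove the two assertions separately, treating them as the (routine) connectivity claim and its converse non-necessity witness. For the first assertion, I would argue directly by exhibiting, for every vertex, a path to one fixed vertex. Fix an edge $ab\in M$ with $a\in V(G_1)$ and $b\in V(G_2)$; such an edge exists since $M$ is a (perfect, hence nonempty) matching between the two vertex sets. Given an arbitrary vertex $u\in V(G_1M^+G_2)$, I would build a path from $u$ to $a$ inside $G_1M^+G_2$: if $u\in V(G_1)$, use connectivity of $G_1$ to reach $a$ through edges of $E(G_1)\subseteq E(G_1M^+G_2)$; if $u\in V(G_2)$, use connectivity of $G_2$ to reach $b$ through edges of $E(G_2)$ and then cross the matching edge $ba\in M$. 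Since every vertex is joined to the single fixed vertex $a$, the graph $G_1M^+G_2$ is connected.

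For the second assertion I would not construct anything new but instead point to the matched-sum decomposition of $P_n$ already established in Example \ref{e:thza-matchedPn}, where $G_1=P_n[T_n]$ and $G_2=P_n[V(P_n)\setminus T_n]$. Each of these induced subgraphs is a disjoint union of copies of $K_2$ together with at most one isolated vertex, so for all even $n\ge 4$ at least one of $G_1,G_2$ is disconnected (for $n=10$ both are), while their matched-sum $G_1M^+G_2\cong P_n$ is connected. This supplies the required witness that connectivity of the two summands is not necessary for connectivity of the matched-sum graph.

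The only point that needs care, and the one place I would flag explicitly, is the quantifier \emph{``for any matching $M$''} in the first claim: were $M$ permitted to be empty, then $G_1M^+G_2$ would reduce to the disjoint union $G_1\cup G_2$ and hence be disconnected whenever both summands are nonempty. I would therefore read ``matching'' in the sense used throughout this section and in the Georges--Mauro definition, namely a matching between $V(G_1)$ and $V(G_2)$ that is automatically nonempty once the (connected) graphs have vertices, and I would remark that the argument in fact uses only a single matching edge, so any nonempty $M$ already suffices. Beyond isolating this degenerate case, I do not anticipate any genuine obstacle, since both halves reduce to elementary path-concatenation and to citing the worked example.
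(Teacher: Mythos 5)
Your proof is correct and follows essentially the same route as the paper, which states this as an observation with no written argument beyond citing Example \ref{e:thza-matchedPn}: the first claim is the elementary one-matching-edge path-concatenation argument you give, and the second claim is witnessed by exactly the decomposition of $P_n$ (with $G_1=P_n[T_n]$, $G_2=P_n[V(P_n)\setminus T_n]$ both disconnected for suitable even $n$) that the paper points to. Your remark on the degenerate empty matching is a reasonable clarification and is consistent with the paper's intended (Georges--Mauro, perfect and hence nonempty) reading of $M$, as confirmed by the paper's subsequent observation that matched-sum graphs have even order.
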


In our work we assume a matched-sum graph $G_1M^+G_2$ is connected but we do not require $G_1$ and $G_2$ to be connected.  Readers interested in details on the connectivity of matched-sum graphs are referred to \cite{BGVM08}.

\begin{obs} Any matched-sum graph $G_1M^+G_2$ necessarily has even order. 
\end{obs}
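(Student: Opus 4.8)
The plan is to read the claim directly off the structure of the matching $M$. By the definition of a matched-sum graph (as introduced by Georges and Mauro), $M$ is a matching between $V(G_1)$ and $V(G_2)$ that pairs \emph{every} vertex of $G_1$ with exactly one vertex of $G_2$, and conversely; that is, $M$ is a complete matching saturating both vertex sets, equivalently a bijection from $V(G_1)$ to $V(G_2)$. The first step is to record that this pairing forces $|V(G_1)|=|V(G_2)|=|M|$.

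Then, since the vertex set $V(G_1M^+G_2)=V(G_1)\cup V(G_2)$ is a union of the two disjoint vertex sets, the order is $|V(G_1)|+|V(G_2)|=2|V(G_1)|$, which is even, completing the argument. This is essentially immediate once the cardinality equality is in hand, so there is no computational content to grind through.

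The only point requiring care, and hence the closest thing to an obstacle, is that $M$ must be a \emph{perfect} (saturating) matching rather than an arbitrary partial one: were $M$ allowed to leave vertices unmatched, the claim would fail, since matching a single edge between $P_2$ and $K_1$ would produce $P_3$, of odd order. Thus the essential step is to invoke the convention built into the matched-sum operation, namely that $M$ saturates all of $V(G_1)\cup V(G_2)$ and so induces a bijection between the two vertex sets.
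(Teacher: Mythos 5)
Your proof is correct and matches the paper's implicit justification: the observation is stated without proof precisely because, under the Georges--Mauro convention that $M$ saturates both vertex sets (inducing a bijection between $V(G_1)$ and $V(G_2)$), one gets $|V(G_1)|=|V(G_2)|$ and hence order $2|V(G_1)|$. Your remark that the saturating convention is essential --- otherwise matching one edge between $P_2$ and $K_1$ yields $P_3$, of odd order --- is exactly the right point of care, and it is consistent with how the paper uses matched-sum graphs elsewhere (e.g., in the proof that $\thza(G)=\frac{n}{2}$ characterizes connected matched-sum graphs of even order).
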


  As shown in Proposition \ref{e:thza-Pn}, it is possible for a graph $G$ of odd order $n$ to satisfy  $\thza(G) = \Big\lceil {\frac {n} 2}\Big\rceil$. As a result, matched-sum graphs are not sufficient to characterize all graphs $G$ attaining the lower for $\thza(G)$ in Corollary \ref{c:thza-lowbound}, but they capture essential structural properties necessaries for a graph to have a zero forcing set with propagation time $1$ of minimum size.\\

We note that matched-sum graphs have also been studied under other names, such as $(G_1,G_2)$-{\em permutation graphs} \cite{BB15} or {\em matching graphs} \cite{proptime}.  In particular, matched-sum graphs were used by Hogben et al. \cite{proptime} to study graphs having a minimum zero forcing set of propagation time $1$. We recall a result from \cite{proptime} with a particular connection to the work in this section.

\begin{prop}\label{p:thza-throttle-proptime}{\rm \cite{proptime}} Let $G$ be a graph of order $n$. Then any two of the following conditions imply the third.
\ben[$(1)$]
\item $n = 2\Z(G)$.
\item $\pt(G) = 1$.
\item $G$ is a matched-sum graph.
\een
\end{prop}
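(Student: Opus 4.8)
The plan is to prove the three implications separately, setting up two structural facts first that will do most of the work. \textbf{Fact A}: if $G = G_1 M^+ G_2$, then $S := V(G_1)$ is a zero forcing set with $\pt(G;S)=1$. Indeed, when only $V(G_1)$ is blue, each $v \in V(G_1)$ has all of its neighbors blue except its unique $M$-partner in $V(G_2)$, so $v$ has a single white neighbor and forces it; since $M$ is a perfect matching these forces simultaneously color all of $V(G_2)$ in one round, and in particular $\Z(G) \le |V(G_1)| = n/2$. \textbf{Fact B}: applying the counting bound \eqref{eq:Zbd} with $k = \Z(G)$ gives $\pt(G) = \pt(G,\Z(G)) \ge \frac{n-\Z(G)}{\Z(G)}$, so the hypothesis $\pt(G)=1$ forces $\Z(G) \ge n/2$.

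Two of the implications then follow immediately. For (2) and (3) $\Rightarrow$ (1), Fact A gives $\Z(G) \le n/2$ and Fact B gives $\Z(G) \ge n/2$, hence $n = 2\Z(G)$. For (1) and (3) $\Rightarrow$ (2), Fact A exhibits $V(G_1)$ as a zero forcing set of size $n/2$, which by (1) equals $\Z(G)$; thus $V(G_1)$ is a minimum zero forcing set and $\pt(G) \le \pt(G;V(G_1)) = 1$, while $\Z(G) = n/2 < n$ gives $\pt(G) \ge 1$, so $\pt(G) = 1$.

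The remaining implication (1) and (2) $\Rightarrow$ (3) is where I expect the real work. Assuming $n = 2\Z(G)$ and $\pt(G)=1$, I would fix a set $S$ with $|S| = \Z(G) = n/2$ and $\pt(G;S)=1$, and put $W = V(G) \setminus S$, so $|W| = n/2$. Every vertex of $W$ is forced in the single round, and under the standard color change rule each $v \in S$ can force at most one vertex in that round (its unique white neighbor). Since $|S| = |W|$ and all $n/2$ vertices of $W$ must be forced while each vertex is the target of exactly one force, the count is tight: every vertex of $S$ forces exactly one vertex of $W$, so the performed forces define a bijection, i.e.\ a perfect matching $M \colon S \to W$. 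I would then show there are no further edges joining $S$ and $W$: if some $v \in S$ were adjacent to $w' \in W$ with $w' \ne M(v)$, then at the initial coloring $v$ would have two white neighbors, $M(v)$ and $w'$, contradicting that $v$ forces $M(v)$ in round one. Hence $E(G) = E(G[S]) \cup E(G[W]) \cup M$ and $G = G[S]\, M^+\, G[W]$ is a matched-sum graph.

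The hard part is thus this last direction, and within it the two points that rely on hypotheses (1) and (2) respectively: the tight counting that upgrades ``$W$ is forced in one round'' into an honest perfect matching depends on $|S| = |W|$, i.e.\ on $n = 2\Z(G)$; and ruling out extra edges between $S$ and $W$ depends on the uniqueness clause of the standard color change rule applied at the initial coloring, i.e.\ on $\pt(G)=1$. Everything else is bookkeeping, so the role of each hypothesis is exactly pinned down by which step it enables.
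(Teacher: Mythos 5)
Your proof is correct. The paper does not actually prove this proposition itself (it is quoted from \cite{proptime}), but your argument---the counting bound \eqref{eq:Zbd} for the easy directions, and, in the hard direction, the tight count that turns the round-one forces into a perfect matching together with the uniqueness-of-white-neighbor argument ruling out extra edges between $S$ and $W$---is essentially the same argument the paper itself uses to prove the closely related Theorem \ref{t:thza-throttle=halfeven}.
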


Let $G$ be a graph of even order $n$. Observe that Conditions 1 and 2 in Proposition \ref{p:thza-throttle-proptime} directly imply $\thza(G)= {\frac {n} 2}= \Z(G)$ and, by Proposition \ref{p:thza-throttle-proptime}, $G$ is a matched-sum graph. As shown in Example \ref{e:thza-matchedPn}, the condition $ {\frac {n} 2}= \Z(G)$  is not necessary to obtain $\thza(G)= {\frac {n} 2}$.  In our next result, we prove that the condition $\thza(G)= {\frac {n} 2}$ is both necessary and sufficient for $G$ to be a matched-sum graph. 

\begin{thm}\label{t:thza-throttle=halfeven}  
A connected graph $G$ of even order $n$ satisfies $\thza(G)= {\frac {n} 2}$ if and only if $G$ is a connected matched-sum graph.
\end{thm}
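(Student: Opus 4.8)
The plan is to prove both directions by relating the condition $\thza(G)=\frac n2$ to the existence of a zero forcing set of propagation time $1$ whose size is exactly $\frac n2$, and then invoking the structural machinery of Lemma~\ref{rev-lem} and Proposition~\ref{p:thza-throttle-proptime}. By Theorem~\ref{t:thza-not-throttle}, $\thza(G)=k(G,1)$, so the hypothesis $\thza(G)=\frac n2$ is equivalent (via Observation~\ref{o:thza-throttle-method}) to $G$ having a zero forcing set $S$ with $\pt(G;S)=1$ and $|S|=\frac n2$. Thus the entire theorem reduces to: \emph{$G$ has a zero forcing set of propagation time $1$ and size $\frac n2$ if and only if $G$ is a connected matched-sum graph.}

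For the forward direction, I would start from such a set $S$ with $|S|=\frac n2$ and $\pt(G;S)=1$. Since $\pt(G;S)=1$, in a single round every vertex of $V(G)\setminus S$ is forced; the complement $V(G)\setminus S$ also has size $\frac n2$, so each vertex of $S$ performs exactly one force and each vertex of the complement is forced exactly once. I would let $G_1=G[S]$ and $G_2=G[V(G)\setminus S]$, and let $M$ be the set of edges $v\to w$ used in the (necessarily perfect) round-one forcing. The key point is that this is a genuine matching: distinct forcing vertices force distinct targets (because a single round performs independent forces), and the bijection between $S$ and its complement means $M$ is a perfect matching between $V(G_1)$ and $V(G_2)$. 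It then remains to check that there are no edges of $G$ between $S$ and $V(G)\setminus S$ other than those in $M$; this follows from the standard color change rule, since if $v\in S$ forces $w$ in round one, $w$ must be the unique white neighbor of $v$, so $v$ has no other neighbor in the complement, giving $E(G)=E(G_1)\cup E(G_2)\cup M$ and hence $G=G_1M^+G_2$.

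For the reverse direction, suppose $G=G_1M^+G_2$ is a connected matched-sum graph with $M$ a perfect matching between $V(G_1)$ and $V(G_2)$. Then $S=V(G_1)$ is a candidate forcing set of size $\frac n2$: each vertex of $G_1$ has, via $M$, exactly one neighbor in $G_2$, and that neighbor is initially its unique white neighbor, so every vertex of $G_1$ forces its $M$-partner in a single round, making $S$ a zero forcing set with $\pt(G;S)=1$. This gives $k(G,1)\le\frac n2$, hence $\thza(G)\le\frac n2$; combined with the tight lower bound $\thza(G)\ge\lceil\frac n2\rceil=\frac n2$ from Corollary~\ref{c:thza-lowbound} (using that $n$ is even), equality follows.

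The main obstacle is the verification in the forward direction that $M$ is simultaneously a matching \emph{and} that it accounts for every crossing edge between $S$ and its complement. The matching property needs both injectivity of the forcing map (no two vertices of $S$ force the same target in one round) and surjectivity onto the complement (every complement vertex is forced), which the size equality $|S|=|V(G)\setminus S|=\frac n2$ forces. The absence of extra crossing edges is where the \emph{uniqueness} in the standard color change rule is essential, and I expect this to require the most care: one must rule out the possibility that some $v\in S$ is adjacent to a complement vertex it does not force. Here I would argue that in round one the forcing vertex $v$ of $w$ has $w$ as its unique white neighbor, so any other neighbor of $v$ in the complement would contradict the rule's applicability. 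I do not anticipate needing Lemma~\ref{rev-lem} directly for this clean $\pt=1$ case, though it underlies Theorem~\ref{t:thza-not-throttle} on which the whole reduction rests.
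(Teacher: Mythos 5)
Your proposal is correct and takes essentially the same approach as the paper's proof: both reduce the statement via Observation~\ref{o:thza-throttle-method} to the existence of a zero forcing set of size $\frac{n}{2}$ with propagation time $1$, build the decomposition from $G[S]$, $G[V(G)\setminus S]$, and the round-one forcing edges, and exclude extra crossing edges using the uniqueness requirement of the standard color change rule (the paper phrases this last step as a contradiction, you phrase it directly, but it is the same argument).
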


\bpf
Suppose $G$ is a matched-sum graph. Let $H$ and $H'$ be a pair of vertex-disjoint graphs, and let $M$ be a matching between $V(H)$ and $V(H')$ such that $G=HM^+H'$. 
It is immediate that $V(H)$ is a zero forcing set of $G$ with order ${\frac {n} 2}$ such that $\pt(G, V(H))=1$, which implies  $\thza(G)={\frac {n} 2}$ by Observation \ref{o:thza-throttle-method}.

Let $G$ be a connected graph of even order $n=2r$ such that $\thza(G)= r$. By Observation \ref{o:thza-throttle-method}, there exists $S\subset V(G)$ zero forcing set of $G$ with $|S|=r$ and $\pt(G;S)=1$. 
Let $H$ and $H'$ be the subgraphs of $G$ induced by $S$ and $V(G)\setminus S$, respectively. Let $M$ be the set of edges $uv$ where $u\to v$; we show $G=HM^+H'$. It is immediate that $H$ and $H'$ are vertex-disjoint graphs of order $r$ satisfying $V(G)=V(HM^+H')$. By definition of matched-sum graph, $E(HM^+H')=E(H)\cup M\cup E(H')$, and the selection of $M$, $H$ and $H'$ guarantees $E(HM^+H') \subseteq E(G)$. To conclude the proof, it is sufficient to show $xy\in E(G)$ with $x\in S$ and $y\not\in S$ implies $xy\in M$.  
Suppose to the contrary that $xy\not\in M$. Then there exists $w\not\in S$ such that $xw\in M$.  But then $x$ has two neighbors in $V(G)\setminus S$, so $\pt(G;S)>1$, which is a contradiction.
\epf

The characterization of all graphs $G$ of odd order $n$ such that $\thza(G)= \lc {\frac {n} 2}\rc$ is given in terms of the a graph operation we recall next. If $G$ is a graph of order $n\geq 2$ and $v$ is a vertex of $G$, then the graph $G-v$ is defined by $V(G-v)=V(G)\setminus \{v\}$ and $E(G-v)=E(G)\setminus \{uv: u\in N_G(v)\}$. That is, $G-v$, the graph obtained by removing the vertex $v$ in all edges incident with $v$ in $G$.

\begin{thm}\label{t:thza-throttle=halfodd}  
A connected graph $G$ of odd order $n$ satisfies $\thza(G)= \lc {\frac {n} 2}\rc$ if and only if there exists $v\in V(G)$ such that $G-v$ is a matched-sum graph.
\end{thm}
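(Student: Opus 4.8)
The plan is to reduce both directions to Observation~\ref{o:thza-throttle-method}, which for odd $n$ identifies $\thza(G)=\lc\frac n2\rc=\frac{n+1}2$ with the existence of a zero forcing set $S$ satisfying $|S|=\frac{n+1}2$ and $\pt(G;S)=1$. Throughout I use that a matched-sum graph has two parts of equal size joined by a \emph{perfect} matching (this is what forces the order to be even, as observed after Example~\ref{e:thza-matchedPn}). The whole argument closely parallels the proof of Theorem~\ref{t:thza-throttle=halfeven}, the only new feature being that the single ``extra'' vertex created by the parity of $n$ is exactly the vertex deleted by the operation $G-v$.

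For the forward direction, suppose $\thza(G)=\frac{n+1}2$ and fix a zero forcing set $S$ with $|S|=\frac{n+1}2$ and $\pt(G;S)=1$, together with a propagating set of forces $\F$. Since $\pt(G;S)=1$, every one of the $\frac{n-1}2$ vertices of $V(G)\setminus S$ is forced in round $1$, each by a distinct vertex of $S$ (in standard zero forcing a vertex forces at most once). As $|S|=\frac{n+1}2$, exactly one vertex $v_0\in S$ performs no force. I would set $v=v_0$, $H=G[S\setminus\{v_0\}]$, $H'=G[V(G)\setminus S]$, and take $M$ to be the set of forcing edges, a perfect matching between $V(H)$ and $V(H')$. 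The key step is to show $G-v_0=HM^+H'$: because $\pt(G;S)=1$, each forcing vertex $x\in S\setminus\{v_0\}$ has its forced vertex as its \emph{unique} neighbor in $V(G)\setminus S$, so every cross edge of $G-v_0$ lies in $M$. This is precisely the argument used in Theorem~\ref{t:thza-throttle=halfeven}; deleting $v_0$ is exactly what disposes of the (possibly several) cross edges incident to the non-forcing vertex, which is why a vertex deletion appears here but not in the even case.

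For the converse, suppose $G-v=HM^+H'$ for some $v\in V(G)$, where $M$ is a perfect matching between the equal-size parts $V(H)$ and $V(H')$, each of order $\frac{n-1}2$. I would take $S=V(H)\cup\{v\}$, so that $|S|=\frac{n+1}2$ and $V(G)\setminus S=V(H')$. Deleting $v$ changes no edge lying inside $V(H)\cup V(H')$, so in $G$ each $x\in V(H)$ still has exactly one neighbor in $V(H')$, namely its $M$-partner. Hence in round $1$ every $x\in V(H)$ forces its partner, and since $M$ is perfect this turns all of $V(H')$ blue; thus $\pt(G;S)=1$. Observation~\ref{o:thza-throttle-method} then yields $\thza(G)=\frac{n+1}2=\lc\frac n2\rc$.

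The main obstacle is the bookkeeping in the forward direction: verifying that exactly one vertex of $S$ fails to force and that each remaining vertex of $S$ has a unique neighbor outside $S$, so that the cross edges of $G-v_0$ coincide with $M$. Once this is in place the matched-sum structure is immediate. The converse is routine given the perfect-matching description of matched-sum graphs, and Corollary~\ref{c:thza-lowbound} is needed only in the form already packaged into Observation~\ref{o:thza-throttle-method}. A minor point worth flagging is that $G-v_0$ need not be connected, so ``matched-sum graph'' here should be read in the structural sense (an expression $G_1M^+G_2$), rather than restricted to the connected case.
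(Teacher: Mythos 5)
Your proof is correct and takes essentially the same route as the paper: both directions rest on Observation \ref{o:thza-throttle-method}, with the deleted vertex identified as the unique vertex of $S$ that performs no force (forward direction) and re-added to one side of the matched sum to build a propagation-time-one forcing set (converse). The only difference is cosmetic --- where the paper simply cites Theorem \ref{t:thza-throttle=halfeven} applied to $G-v$ in both directions, you inline its matching argument in the forward direction, which incidentally sidesteps the connectivity hypothesis of that theorem (a point the paper glosses over when $G-v$ is disconnected, and which you rightly flag).
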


\bpf
Suppose $v\in V(G)$ and $G-v$ is a matched-sum graph. Then by Theorem \ref{t:thza-throttle=halfeven}, there is a set $S\subset V(G-v)$ such that $|S|=\frac{n-1}2$ and $\pt(G-v;S)=1 $.  Define $S'=S\cup\{v\}$.  Then $\frac{n+1}2=|S'|=\thza(G;S)\ge \thza(G)$, and $\thza(G)= \lc {\frac {n} 2}\rc$ by Theorem \ref{t:thza-not-throttle}.

Let $G$ be a graph of odd order $n=2r+1$  such that $\thza(G)= r+1$. By Observation \ref{o:thza-throttle-method}, there exists $S\subset V(G)$ with $|S|=r+1$ such that $S$ is a zero forcing set of $G$ and $\pt(G;S)=1$. Let $\F$ be a propagating set of forces and  let $v\in S$ be the one vertex that does not perform a force.  Then $S'=S\setminus \{v\}$ is zero forcing set for $G-v$ and $\pt(G-v;S')=1$.  Thus $\thza(G-v)=r=\frac{|V(G-v)|}2$, so $G-v$ is a matched-sum graph by Theorem  \ref{t:thza-throttle=halfeven}.
\epf

Matched-sum graphs include several interesting graph families, and for these graphs the product zero forcing  throttling number is obtained by applying Theorem \ref{t:thza-throttle=halfeven}. For example, the {\em $d$-dimensional hypercube} $Q_d=K_2\cp \cdots\cp K_2$ (with $d$ copies of $K_2$) is a matched-sum graph for any $d\geq 2$, so Theorem \ref{t:thza-throttle=halfeven} yields $\thza(Q_d)=2^{d-1}$.  The generalized Petersen graph $G(r,s)$ is also a matched-sum graph, so $\thza(G(r,s))=r$.

\subsection{High values  of $\thza(G)$}

The maximum value of $\thza(G)$ over connected graphs $G$ of order $n\ge 2$ is $n-1$ (see Observation \ref{o:thxa-bds}) and this is realized by $K_n$.  We can use Carlson and Kritschgau's  characterization of graphs having $\thz(G)=n$ in \cite{JK-forbid} to characterize graphs having $\thza(G)=n-1$.

\begin{figure}[h!] \begin{center}
\begin{tikzpicture}[scale = .75]
    \vertex (v0) at (0,0){};
    \vertex (v1) at (-1.5,1){};
    \vertex (v2) at (-1.5,-1){};
    \vertex (v3) at (1.5,1){};
    \vertex (v4) at (1.5,-1){};
    \foreach \m/\n in {0/1,0/2,0/3,0/4,1/2,3/4}{
        \draw[thick] (v\m) to (v\n);}
\end{tikzpicture}
\caption{The bowtie graph. \label{fig:bowtie}}\vspace{-15pt}
\end{center}
\end{figure}
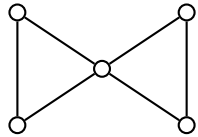

\begin{thm}\label{t:thz-hi-ext}{\rm \cite{JK-forbid}} For  a connected graph $G$   of order $n$,  $\thz(G)=n $ if and only if  $G$ does not have  a $P_4, C_4$, or  bowtie graph (see Figure \ref{fig:bowtie}) as an induced subgraph. 
\end{thm}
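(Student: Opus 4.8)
The plan is to reduce the statement to a purely local, four-vertex adjacency condition and then match that condition against the three forbidden graphs. Since $\thz(G)\le n$ always (taking $S=V(G)$ gives $n+0$), the claim $\thz(G)=n$ is equivalent to $\thz(G)\ge n$, i.e.\ to the non-existence of any forcing set beating $n$, so it suffices to characterize when $\thz(G)\le n-1$.

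First I would record the counting bound behind the whole question. For a forcing set $S$ with $p=\pt(G;S)$, each of the $p$ rounds forces at least one vertex while the rounds together force $n-|S|$ vertices, so $\thz(G;S)=|S|+p\le n$, with equality exactly when every round forces precisely one vertex. Hence $\thz(G)\le n-1$ if and only if \emph{some} forcing set has a round in which two distinct vertices turn blue simultaneously. Second, I would convert such a round into a configuration of four distinct vertices. If $u_1\to w_1$ and $u_2\to w_2$ occur in the same round, then $w_1\ne w_2$; the forcers satisfy $u_1\ne u_2$ (a forcing vertex has a \emph{unique} white neighbor); $u_1,u_2\notin\{w_1,w_2\}$; and, because $w_1,w_2$ are white at that moment, $u_1w_1,u_2w_2\in E(G)$ while $u_1w_2,u_2w_1\notin E(G)$. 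Conversely, given four distinct vertices with $u_1w_1,u_2w_2\in E(G)$ and $u_1w_2,u_2w_1\notin E(G)$, coloring $S=V(G)\setminus\{w_1,w_2\}$ blue forces both $w_1$ and $w_2$ in one round, so $\thz(G)\le\thz(G;S)=(n-2)+1=n-1$. Thus $\thz(G)\le n-1$ if and only if $G$ contains four distinct vertices in this pattern; I will call it a \emph{double-force configuration}.

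Third, I would classify a double-force configuration by the status of the two free pairs $u_1u_2$ and $w_1w_2$: if both are edges, the four vertices induce $C_4$ (with $u_1w_2,u_2w_1$ the diagonals); if exactly one is an edge, they induce $P_4$; if neither is an edge, they induce $2K_2$. This settles the $C_4$ and $P_4$ cases at once, and read backwards it shows that each forbidden graph contains a double-force configuration: for $P_4$ take $w_1,w_2$ to be the middle edge; for $C_4$ take $w_1,w_2$ an edge and $u_1,u_2$ the opposite edge; for the bowtie take the four non-central vertices, with $w_1,u_1$ in one triangle and $w_2,u_2$ in the other. Hence any induced $P_4$, $C_4$, or bowtie forces $\thz(G)\le n-1$.

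The main obstacle is the remaining $2K_2$ case, which is precisely where the bowtie must enter: a $2K_2$ configuration is not itself one of the forbidden graphs, so I must argue that if $G$ is $P_4$-free then a $2K_2$ configuration extends to an induced bowtie. Here I would exploit connectivity: since $u_1\not\sim u_2$, a shortest $u_1$--$u_2$ path of length at least three would be an induced $P_4$, so $u_1$ and $u_2$ have a common neighbor $p\notin\{u_1,w_1,u_2,w_2\}$; then $P_4$-freeness applied to $w_1,u_1,p,u_2$ and to $u_1,p,u_2,w_2$ forces $pw_1,pw_2\in E(G)$. Now $p$ is adjacent to all of $u_1,w_1,u_2,w_2$, and together with the four cross non-edges of the $2K_2$ this makes $\{p,u_1,w_1,u_2,w_2\}$ an induced bowtie. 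Combining the three steps yields the equivalence: $\thz(G)=n$ fails exactly when a double-force configuration exists, which in turn happens exactly when $G$ has an induced $P_4$, $C_4$, or bowtie.
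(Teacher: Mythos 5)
Your proof is correct. Note first that the paper itself does not prove this statement: it is quoted from \cite{JK-forbid}, so there is no internal proof to compare against; your argument stands as a self-contained derivation. The three pillars all check out. The pigeonhole reduction is right: $\thz(G;S)=|S|+p\le n$ with equality iff every round forces exactly one vertex, so $\thz(G)\le n-1$ iff some forcing set admits a round with two simultaneous forces, and such a round is equivalent (in both directions, using $S=V(G)\setminus\{w_1,w_2\}$ for the converse) to the existence of four distinct vertices with $u_1w_1,u_2w_2\in E(G)$ and $u_1w_2,u_2w_1\notin E(G)$. The case analysis on the two free pairs $u_1u_2$, $w_1w_2$ correctly yields an induced $C_4$, $P_4$, or $2K_2$, and your backwards check that each of $P_4$, $C_4$, and the bowtie contains such a configuration is accurate (ends of a $P_4$ forcing the middle edge; opposite edges of a $C_4$; the four non-central bowtie vertices). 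The crux, as you identify, is the $2K_2$ case, and this is exactly where both the connectivity hypothesis and the bowtie enter: since $u_1\not\sim u_2$ and $G$ is connected, either a shortest $u_1$--$u_2$ path has length at least $3$ (its first four vertices are an induced $P_4$, since subpaths of shortest paths are shortest) or there is a common neighbor $p$, which your two applications of $P_4$-freeness force to be adjacent to $w_1$ and $w_2$ as well, producing the induced bowtie with center $p$. The small details are also handled correctly: $p\notin\{w_1,w_2\}$ because $w_i$ is adjacent to only one of $u_1,u_2$, and $u_1\ne u_2$ because a vertex with two white neighbors cannot force. This is a clean and elementary route to the characterization, and it makes transparent why precisely these three graphs appear: $P_4$ and $C_4$ are the configurations themselves, while the bowtie is the unavoidable completion of the $2K_2$ configuration inside a connected $P_4$-free graph.
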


\begin{cor}\label{c:thza-hi-ext} For  a connected graph $G$   of order $n\ge 2$,  $\thza(G)=n-1 $ if and only if  $G$ does not have a $P_4, C_4$, or  bowtie graph as an induced subgraph. 
\end{cor}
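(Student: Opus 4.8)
The plan is to split on whether $G$ contains a forbidden induced subgraph, using as standing facts the identity $\thza(G)=k(G,1)$ from Theorem~\ref{t:thza-not-throttle} and the universal upper bound $\thza(G)\le n-1$ from Observation~\ref{o:thxa-bds}. The two cases will directly deliver the two directions of the biconditional.

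When $G$ has \emph{no} induced $P_4$, $C_4$, or bowtie, I would invoke Theorem~\ref{t:thz-hi-ext} to conclude $\thz(G)=n$, and then Proposition~\ref{p:thx-1} (specialized to standard zero forcing, so that $\thxa=\thza$ and $\thx=\thz$) to obtain $\thza(G)\ge\thz(G)-1=n-1$. Combined with $\thza(G)\le n-1$, this forces $\thza(G)=n-1$.

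When $G$ \emph{does} contain one of the three graphs as an induced subgraph, I would show $\thza(G)\le n-2$ by exhibiting a single zero forcing set of size $n-2$ with propagation time $1$; since $\thza(G)=k(G,1)$, this gives $\thza(G)\le n-2$. The mechanism is uniform: in each forbidden subgraph I can select four distinct vertices $u_1,w_1,w_2,u_2$ with $u_1w_1,\,w_2u_2\in E(G)$ and $u_1w_2,\,w_1u_2\notin E(G)$. Concretely, for an induced $P_4$ with vertex sequence $a,b,c,d$ (edges $ab,bc,cd$) I take $(u_1,w_1,w_2,u_2)=(a,b,c,d)$; for an induced $C_4$ with cyclic sequence $a,b,c,d$ I take $(u_1,w_1,w_2,u_2)=(d,a,b,c)$; and for the bowtie of Figure~\ref{fig:bowtie} I take the two triangle tips $w_1=v_1,\,w_2=v_3$ together with their triangle partners $u_1=v_2,\,u_2=v_4$. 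Setting $S=V(G)\setminus\{w_1,w_2\}$, the only white vertices are $w_1$ and $w_2$; then $u_1$ has $w_1$ as its unique white neighbor and $u_2$ has $w_2$ as its unique white neighbor, so the forces $u_1\to w_1$ and $u_2\to w_2$ both occur in round one and $\pt(G;S)=1$.

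The step that needs care is this last one: I must verify that each of the three forbidden graphs genuinely contains the required crossing pattern, and---crucially---use that the subgraph is \emph{induced}, so that the non-edges $u_1w_2$ and $w_1u_2$ hold in $G$ itself and not merely in the model graph. It is exactly these non-adjacencies that make $w_1$ (respectively $w_2$) the \emph{unique} white neighbor of $u_1$ (respectively $u_2$), and hence guarantee that both vertices are forced in a single round regardless of what other (necessarily blue) neighbors $u_1$ and $u_2$ may have in $G$. Everything else---the identity $\thza(G)=k(G,1)$, the bound $\thza(G)\le n-1$, and the reduction of the forbidden-free case to Theorem~\ref{t:thz-hi-ext} and Proposition~\ref{p:thx-1}---is immediate from results already in hand, so the crossing-pattern verification is the only real content.
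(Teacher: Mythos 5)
Your proposal is correct and follows essentially the same route as the paper: the paper also combines the upper bound $\thza(G)\le n-1$, the implication from Theorem~\ref{t:thz-hi-ext} together with Proposition~\ref{p:thx-1} for the forbidden-subgraph-free direction (stated there contrapositively), and for the other direction a size-$(n-2)$ forcing set with propagation time $1$ built from a propagation-time-$1$ set inside the induced $P_4$, $C_4$, or bowtie. The only cosmetic difference is that the paper cites Remark~\ref{r:thxa-induced} and the fact $\pt(G',n'-2)=1$ for the three forbidden graphs, whereas you inline that argument with your explicit ``crossing pattern,'' correctly using inducedness to guarantee the uniqueness of the white neighbors.
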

\bpf Since $\thza(G)\le n-1$, it suffices to establish that $\thza(G)\le n-2 $ if and only if  $G$ has a $P_4, C_4$, or  bowtie graph as an induced subgraph.

Let $G'$ be one of $P_4, C_4,$ or the bowtie graph and let $n'$ be the order of $G'$. Observe that $\pt(G',n'-2)=1$, so $\thza(G')\le n'-2$.  Suppose that a  connected graph $G$ of order $n$ contains $G'$ as an induced subgraph.
  Then by Remark \ref{r:thxa-induced}, $\thza(G)\le n-n'+(n'-2)=n-2 $.

Now suppose $G$ is a connected graph of order $n\ge 2$ and $\thza(G)\le n-2$. Then $\thz(G)\le \thza(G)+1=n-1$ by Proposition \ref{p:thx-1},  so   $G$ has a  $P_4, C_4$, or bowtie graph as an induced subgraph by Theorem \ref{t:thz-hi-ext}.
\epf


\section{Product throttling  for Cops and Robbers revisited}\label{s:prod-CR-a}
In Section \ref{s:prod-CR}, we summarized known results on the product cop throttling number with initial cost, $\thcx(G)$. In this section, we study product cop throttling number with no initial cost.  Let $G$ be a connected graph of order at least two. For a set $S\subseteq V(G)$ with $c(G)\le|S|\leq \gamma(G)$, $\thca(G;S)=|S|\capt(G;S)$. Define $\thca(G,k)=k\capt_k(G)$. The {\em no initial cost product cop throttling number} of $G$ is 
 \[\thca(G)=\min_{c(G) \le k\le\gamma(G)}k\capt_k(G)=\min_{c(G) \le k\le\gamma(G)}\thca(G,k).\]
While initial cost product throttling seems more realistic if considering actual police officers, no initial cost product throttling is useful in other searching applications. This parameter has been studied as {\em work} $w_k=k\capt_k(G)$ and {\em speedup} between using $j>i$ cops,  defined as $w_i/w_j$ \cite{LP16,LP17,LP19}. The  product cop power throttling number with no initial cost extends this idea by considering the number of cops that yields the largest possible speed-up.
We give bounds for $\thca(G)$ and determine this number exactly for certain families of graphs, including paths, cycles, complete graphs, complete bipartite graphs, full $t$-ary trees and unit interval graphs.  We also establish a few additional results for $\thcx(G)$.

\subsection{General observations}

 \begin{obs} \label{o:thca-bds} Let $G$ be a connected graph of order at least two.
There are several immediate upper bounds for $\thca(G)$:
\ben[$(1)$]
\item $\thca(G)\le \gamma(G)$.
\item $\thca(G)\le c(G)\capt(G)$.
\item $\thca(G) \le  \thcx(G)-c(G)$.\smallskip

\hspace{-10mm} There are several immediate lower bounds for $\thca(G)$:
\item $ \thca(G)\ge c(G)$.
\item $ \thca(G)  \ge \thc(G)-1$.
\item $ \thca(G)\ge  \min_{c(G)\le k\le \gamma(G)}k\rad_k(G)$.
\een
Furthermore, $\thca(G)= c(G)$ if and only $\capt(G)=1$.
\end{obs}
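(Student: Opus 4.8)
The plan is to derive every one of the seven assertions by evaluating the defining minimization $\thca(G)=\min_{c(G)\le k\le\gamma(G)}k\capt_k(G)$ at a well-chosen value of $k$. First I would record two elementary facts about capture time for a connected graph of order $n\ge 2$: the quantity $\capt_k(G)$ is nonincreasing in $k$ (extra cops can be ignored), and one has $\capt_k(G)\ge 1$ whenever $k<n$ (the robber can place itself on any uncovered vertex) while $\capt_k(G)=1$ \emph{exactly} for $\gamma(G)\le k<n$ (a placement of $k$ cops captures in one round iff those cops dominate). I would also invoke the domination bound $\gamma(G)\le\lfloor n/2\rfloor$ from Theorem~\ref{half}, so that $\gamma(G)\le n-c(G)$.

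For the three upper bounds: $(1)$ follows by taking $k=\gamma(G)$, where cops on a minimum dominating set give $\capt_{\gamma(G)}(G)=1$, hence $\thca(G)\le\gamma(G)$; $(2)$ follows by taking $k=c(G)$, since $\capt_{c(G)}(G)=\capt(G)$. For $(3)$ I would use the identity $k\capt_k(G)=\thcx(G,k)-k\le\thcx(G,k)-c(G)$, valid for every $k\ge c(G)$, and minimize over $c(G)\le k\le\gamma(G)$ to get $\thca(G)\le\min_{c(G)\le k\le\gamma(G)}\thcx(G,k)-c(G)$; it then remains to check that this restricted minimum of $\thcx(G,\cdot)$ coincides with $\thcx(G)$. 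This is where the facts above enter: for $\gamma(G)\le k<n$ we have $\thcx(G,k)=2k\ge 2\gamma(G)=\thcx(G,\gamma(G))$, and $\thcx(G,n)=n\ge 2\gamma(G)$, so the global minimum is already attained inside $[c(G),\gamma(G)]$.

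For the three lower bounds: $(4)$ holds because every admissible $k$ satisfies $k\le\gamma(G)<n$, whence $\capt_k(G)\ge 1$ and $k\capt_k(G)\ge k\ge c(G)$; $(6)$ is immediate from $\capt_k(G)=\pt_c(G,k)\ge\rad_k(G)$ (Remark~\ref{r:th-rad}), applied termwise before taking the minimum. For $(5)$ I would first observe that the stated range $c(G)\le k\le\gamma(G)$ yields the same minimum as the universal range $c(G)\le k<n$, since for $\gamma(G)<k<n$ one has $k\capt_k(G)=k>\gamma(G)\ge\thca(G,\gamma(G))$; this identifies $\thca(G)$ with the universal $\thxa(G)$ for $Y=c$, and then Proposition~\ref{p:thx-1} applies directly to give $\thca(G)\ge\thc(G)-1$.

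Finally, for the equivalence: if $\capt(G)=1$ then $\thca(G,c(G))=c(G)$, which together with $(4)$ forces $\thca(G)=c(G)$; conversely, if $\thca(G)=c(G)$ and $k_0$ attains the minimum, then $c(G)=k_0\capt_{k_0}(G)\ge k_0\ge c(G)$ (using $k_0\le\gamma(G)<n$, so $\capt_{k_0}(G)\ge 1$) forces $k_0=c(G)$ and $\capt_{c(G)}(G)=1$, i.e. $\capt(G)=1$. The main obstacle I anticipate is bound $(3)$: the algebraic inequality itself is a one-line manipulation, but it is genuinely necessary to justify that the unrestricted $\thcx$-minimum is realized within the smaller range $[c(G),\gamma(G)]$, which rests on the precise characterization of when $\capt_k(G)=1$ together with $\gamma(G)\le\lfloor n/2\rfloor$; the same range-matching is also the only non-formal ingredient in deducing $(5)$ from Proposition~\ref{p:thx-1}.
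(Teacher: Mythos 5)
Your proof is correct. The paper states Observation \ref{o:thca-bds} without proof (the bounds are presented as ``immediate''), and your argument supplies exactly the intended justifications---evaluating $k\capt_k(G)$ at $k=\gamma(G)$ and $k=c(G)$, using $\capt_k(G)\ge 1$ for $k<n$ and $\capt_k(G)=1$ precisely when $\gamma(G)\le k<n$, and specializing the universal results of Section \ref{ss:univ} (Remark \ref{r:th-rad}, Proposition \ref{p:thx-1})---while also carefully verifying the range-matching between $c(G)\le k\le\gamma(G)$ and $c(G)\le k\le n$ (resp.\ $k<n$), which is the only genuinely non-formal point and one the paper glosses over.
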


\begin{rem}\label{thca:vals} The following values for $K_n$ and $K_{r,n-r}$ follow immediately  from Observation~\ref{o:thca-bds}.  Capture time agrees with power propagation time for paths and cycles, so the values of these graphs follow from Observation~\ref{p:pathcycle}. 
\ben[$(1)$]
\item \label{thcaKn}  $\thca(K_n)=1=\gamma(K_n)$. 
\item \label{thcaKpq} For $2\le r\le n-2$, $\thca(K_{r,n-r}) = 2=\gamma(K_{r,n-r})$ and $\thca(K_{1,n-1}) = 1=\gamma(K_{1,n-1})$. 
\item \label{thcaPn} $\thca(P_n)=\lc\frac n 3\rc=\gamma(P_n)$.
\item $\thca(C_n)=\lc\frac n 3\rc=\gamma(C_n)$ for $n\geq 4$. 
\een
\end{rem}

The next result is immediate from Remark \ref{r:univ-low} and Proposition \ref{p:thcx-low}.

\begin{rem}\label{r:c2-low} Let $G$ be a connected graph of order at least two.

\ben[$(1)$]
\item \label{thca1} $\thca(G)=1$ if and only if $\gamma(G)=1$. 
\item \label{thca2} $\thca(G)=2$ if and only if 
$G$ satisfies at least one of the following conditions:
		\ben[(a)]
			\item $\gamma(G) = 2$.			
			\item \label{2b} $\gamma(G) \geq 3$ and there exists $z \in V(G)$ such that 
			\ben[(i)] 
			\item for all $v \in V(G)$, $\dist(z, v) \leq 2$, and 
			\item \label{2ii} for all $w \in V(G) \setminus N[z]$, there is a vertex $u \in N[z]$ such that $N[w] \subset N[u]$.
			\een
		\een
\item \label{thca3} $\thca(G)=3$ if and only if
$G$ satisfies at least one of the following conditions:
		\ben[(a)]
			\item $\gamma(G) = 3$ and $\thca (G)\neq 2$.
			\item $c(G)=1$, $\capt(G,1)=3$, and $\capt(G,2)\ge 2$. 	
		\een
\een
\end{rem}

Note that whether or not $\capt(G,2)\ge 2$ can be determined by a polynomial time algorithm  (see \cite{HM06} or \cite[Algorithm 2]{cop-throttle}).

Let $G_1$ and $G_2$ be graphs such that $G_1\cap G_2= K_m$  for some $m$, and $G_1,G_2\neq K_m$. Then $G_1\cup G_2$ is the \emph{clique sum} of $G_1$ and $G_2$. Next we state bounds for the (sum) cop throttling number for clique sums and establish analogous bounds for the product cop throttling numbers of clique sums.

\begin{thm}{\rm\cite{cop-throttle}}
 Let $G$ be a clique sum of $G_1$ and $G_2$. Let $k_1$ and $k_2$  be numbers such that $\thc(G_i)=\thc(G_i,k_i)$ for $i\in\{1,2\}$. Then \[\max\{\thc(G_1,k_1),\thc(G_2,k_2)\}\leq \thc(G)\leq k_1+k_2+\max\{\capt_{k_1}(G_1),\capt_{k_2}(G_2)\}.\]
\end{thm}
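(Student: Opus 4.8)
The plan is to realize each $G_i$ as a \emph{retract} of $G$ and then to run the optimal cop strategies for $G_1$ and $G_2$ against the corresponding ``shadows'' of the robber. First I would fix a vertex $c_i$ of the shared clique $K_m = G_1\cap G_2$ and define $r_i\colon V(G)\to V(G_i)$ to be the identity on $V(G_i)$ and to send every vertex of $V(G_{3-i})\setminus V(K_m)$ to $c_i$. The structural fact I would verify at the outset is that $r_i$ is a graph homomorphism, where adjacent-or-equal images are permissible (matching the fact that players may stay put): since every edge of $G$ lies in $E(G_1)$ or $E(G_2)$, no edge joins $V(G_1)\setminus V(K_m)$ to $V(G_2)\setminus V(K_m)$, and any edge with one endpoint in $V(G_{3-i})\setminus V(K_m)$ has its other endpoint in $V(K_m)$; as $K_m$ is a clique, adjacent vertices map to adjacent or equal vertices. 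Thus each $G_i$ is a retract of $G$, so for any robber trajectory on $G$ the image $r_i(\text{robber})$ is a legitimate robber trajectory on $G_i$.

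For the \textbf{lower bound}, I would let $k$ realize $\thc(G)$ and have the robber confine itself to $V(G_i)$, playing an optimal survival strategy against the $r_i$-shadows of the $k$ true cops. Because the shadows constitute $k$ legitimate cops on $G_i$, the robber survives at least $\capt_k(G_i)$ rounds against them; and since a real capture of a robber sitting at $p\in V(G_i)$ forces the capturing cop to be at $p$ as well (so that its shadow coincides with the shadow robber), every real capture is a shadow capture. Hence $\capt_k(G)\ge \capt_k(G_i)$ for all $k$, giving $\thc(G)=k+\capt_k(G)\ge k+\capt_k(G_i)\ge \thc(G_i)=\thc(G_i,k_i)$; taking the larger of $i=1,2$ yields the left inequality.

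For the \textbf{upper bound} I would deploy $k_1+k_2$ cops: a team of $k_1$ cops tracking the $r_1$-shadow via an optimal $G_1$-strategy and a team of $k_2$ cops tracking the $r_2$-shadow via an optimal $G_2$-strategy. By definition of $\capt_{k_i}(G_i)$, the $G_i$-team captures its shadow within $\capt_{k_i}(G_i)$ rounds and thereafter stays co-located with it. Writing $T=\max\{\capt_{k_1}(G_1),\capt_{k_2}(G_2)\}$, at round $T$ a $G_1$-cop sits on $r_1(p)$ and a $G_2$-cop sits on $r_2(p)$, where $p$ is the robber's true position. Since $V(G)=V(G_1)\cup V(G_2)$, either $p\in V(G_1)$, in which case $r_1(p)=p$ and the $G_1$-team has caught the robber, or $p\in V(G_2)$, in which case $r_2(p)=p$ and the $G_2$-team has. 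Thus $\capt_{k_1+k_2}(G)\le T$, and therefore $\thc(G)\le (k_1+k_2)+T$, the right inequality.

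The step I expect to be the main obstacle is the ``capture-and-track'' claim underlying both directions: I must argue that an optimal $G_i$-strategy not only captures the shadow by round $\capt_{k_i}(G_i)$ but can then maintain co-location with it for all later rounds up to the common round $T$. This requires handling the move order carefully, the point being that once a cop shares the shadow's vertex it can mimic each subsequent shadow move, which traverses an edge of $G_i$ or stays put. Establishing this persistent tracking and synchronizing the two teams to the single round $T$ is where the care lies; by contrast, the construction of the retractions and the final case split on $p\in V(G_1)\cup V(G_2)$ are routine.
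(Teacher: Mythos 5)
Your proof is correct and takes essentially the same route as the paper's: the paper (and its proof of the analogous product-throttling proposition) rests on exactly your two ingredients, namely that $G_1$ and $G_2$ are retracts of $G$, which yields $\capt_{k_1+k_2}(G)\le\max\{\capt_{k_1}(G_1),\capt_{k_2}(G_2)\}$ via shadow-tracking for the upper bound, and that a robber confined to $V(G_i)$ forces $\capt_k(G)\ge\capt_k(G_i)$ for the lower bound. Your explicit construction of the retractions $r_i$ and the co-location invariant (including the move-order bookkeeping you flag) just fills in details the paper delegates to its references.
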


\begin{prop} Let $G$ be a connected non-trivial clique sum of $G_1$ and $G_2$. Let $k_1$, $k_2$, $\ell_1,\ell_2$ be such that for $i\in \{1,2\}$, $\thca(G_i)=k_i\cdot \capt_{k_i}(G_i)$ and $\thcx(G_i)=\ell_i \cdot (\capt_{\ell_i}(G_i)+1)$. Then
\bea\max\{\thca(G_1),\thca(G_2)\}&\leq &\thca(G)\leq (k_1+k_2)\max\{\capt_{k_1}(G_1),\capt_{k_2}(G_2)\},\text{ and} \\
 \max\{\thcx(G_1),\thcx(G_2)\}&\leq &\thcx(G)\leq (\ell_1+\ell_2)(\max\{\capt_{\ell_1}(G_1),\capt_{\ell_2}(G_2)\}+1). 
\eea
\end{prop}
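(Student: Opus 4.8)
The plan is to treat the two inequalities in each display separately: the upper bounds come from a single cop placement, the lower bounds from a retraction argument, and the two product statements run in parallel, differing only in which of the $k_i$ or $\ell_i$ are used and in whether one multiplies $\capt$ or $1+\capt$.

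For the upper bounds I would reuse the cop placement underlying the upper bound of the preceding sum-throttling clique-sum theorem \cite{cop-throttle}. Its proof in fact establishes the placement-level statement that for \emph{any} $k_1\ge c(G_1)$ and $k_2\ge c(G_2)$, deploying $k_1$ cops on an optimal configuration for $G_1$ together with $k_2$ cops on $G_2$ captures the robber on $G$ in at most $\max\{\capt_{k_1}(G_1),\capt_{k_2}(G_2)\}$ rounds, so in particular $\capt_{k_1+k_2}(G)\le\max\{\capt_{k_1}(G_1),\capt_{k_2}(G_2)\}$ and $c(G)\le k_1+k_2$. I would invoke this with the product-optimal choices. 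For $\thca$, taking $k=k_1+k_2$ gives $\thca(G)\le(k_1+k_2)\capt_{k_1+k_2}(G)\le(k_1+k_2)\max\{\capt_{k_1}(G_1),\capt_{k_2}(G_2)\}$; for $\thcx$ the same placement with $\ell_1+\ell_2$ cops gives $\thcx(G)\le(\ell_1+\ell_2)(1+\capt_{\ell_1+\ell_2}(G))\le(\ell_1+\ell_2)(1+\max\{\capt_{\ell_1}(G_1),\capt_{\ell_2}(G_2)\})$. For $\thca$ one must also check that $k_1+k_2$ is admissible for the defining minimum; if $k_1+k_2>\gamma(G)$ then instead $\thca(G)\le\gamma(G)<k_1+k_2\le(k_1+k_2)\max\{\capt_{k_1}(G_1),\capt_{k_2}(G_2)\}$ by Observation \ref{o:thca-bds}, so the bound holds regardless (the full range $c(G)\le k\le n$ for $\thcx$ needs no such caveat).

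For the lower bounds I would use that $K_m$ is a clique separator, so each $G_i$ is a retract of $G$: the map $r_1$ fixing $V(G_1)$ and collapsing $V(G_2)\setminus V(K_m)$ onto a single vertex of $K_m$ (and symmetrically $r_2$) is a graph homomorphism, because every edge of $G$ lies inside $V(G_1)$ or inside $V(G_2)$. Projecting an optimal $G$-cop strategy through $r_i$ — simulate a virtual $G$-game whose robber is the given $G_i$-robber, and move each real $G_i$-cop to the $r_i$-image of its virtual position — yields a valid $G_i$-strategy that captures at the same round, since $r_i$ fixes $V(G_i)$ and the optimal $G$-strategy captures every robber trajectory within $\capt_k(G)$. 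This gives $\capt_k(G_i)\le\capt_k(G)$ for all $k\ge c(G_i)$. Taking a $\thca$-optimal $k^{*}$ for $G$ then yields $\thca(G)=k^{*}\capt_{k^{*}}(G)\ge k^{*}\capt_{k^{*}}(G_i)\ge\thca(G_i)$, and the analogous computation with $1+\capt$ handles $\thcx$; hence $\thca(G)\ge\max\{\thca(G_1),\thca(G_2)\}$ and likewise for $\thcx$.

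The main obstacle is the upper bound's underlying capture claim $\capt_{k_1+k_2}(G)\le\max\{\capt_{k_1}(G_1),\capt_{k_2}(G_2)\}$: because the robber may cross the separator $K_m$, one must argue that running both optimal subgraph strategies simultaneously (each against its own projected robber) confines and captures the robber in the stated time — in particular, a cop that reaches the collapse-vertex of $K_m$ while the robber is on the far side guards the clique and pins the robber to one part. I intend to obtain this from the construction proving the cited sum-throttling theorem rather than rebuild it. A secondary, routine point is the bookkeeping at the ends of the feasible range of $k$ (when $k_1+k_2$ or $\ell_1+\ell_2$ exceeds $\gamma(G)$ or $|V(G_i)|$), which I would dispatch using the trivial bounds $\thca(G)\le\gamma(G)$ and $\thcx(G_i)\le|V(G_i)|$.
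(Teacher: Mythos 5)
Your proposal is correct and follows essentially the same route as the paper: the upper bounds come from the clique-sum capture-time bound $\capt_{k_1+k_2}(G)\le\max\{\capt_{k_1}(G_1),\capt_{k_2}(G_2)\}$ inherited from the retract machinery of \cite{cop-throttle} and \cite{BPPR17}, and the lower bounds come from projecting an optimal cop strategy on $G$ through the retraction onto $G_i$ (the paper phrases this as ``no benefit to the cops starting outside $V(G_i)$''). Your treatment is in fact more careful than the paper's about the feasible ranges of $k$ (e.g., when $k_1+k_2>\gamma(G)$, or when the optimal $k^{*}$ for $G$ exceeds $\gamma(G_i)$ or $|V(G_i)|$, where $\capt_{k^{*}}(G_i)$ can drop to $0$ and the naive chain $k^{*}\capt_{k^{*}}(G_i)\ge\thca(G_i)$ would fail); the trivial bounds $\thca(G_i)\le\gamma(G_i)$ and $\thcx(G_i)\le|V(G_i)|$ you cite do dispatch these cases, since $\capt_{k^{*}}(G)\ge 1$ whenever $k^{*}\le\gamma(G)$.
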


\begin{proof}
 Let $k=k_1+k_2$. As in \cite{cop-throttle}, note that $G_1$ and $G_2$ are retracts of $G$, so $\capt_k(G)\leq \max\{\capt_{k_1}(G_1),\capt_{k_2}(G_2)\}$ (see 
 \cite{BPPR17}). Therefore \[\thca(G)\leq (k_1+k_2)\cdot\max\{\capt_{k_1}(G_1),\capt_{k_2}(G_2)\}.\]

For the lower bound, as in \cite{cop-throttle}, note that for $i\in\{1,2\}$, if a robber's movement within $G$ is restricted to $V(G_i)$, then for $k_i$ cops, there is no benefit to the cops starting outside $V(G_i)$. The $k_i$ cops then catch the robber in time $\capt_{k_i}{G_i}$.

The proof for $\thcx(G)$ is similar.
\end{proof}

\subsection{Chordal graphs}

Recall that a graph $G$ is a {\em chordal graph} if it has no induced cycle of length greater than 3. 
The next result is immediate from Theorem  \ref{t:thcx-chord} and Observation \ref{o:thca-bds}.
\begin{rem}\label{r:thca-chordal} Let $H$ be a connected chordal graph of order $n\ge 2$.  Then
\[\thca(H)=  \min_{1\le k\le \gamma(H)}k\rad_k(H)\le\min\{ \rad(H),\gamma(H)\}.\]
\end{rem}

Each of the upper bounds $ \rad(G)$ and $\gamma(G)$ is tight, but the inequality in Remark \ref{r:thca-chordal} cannot be changed to an equality. In Table \ref{tab-ex-thca}, we provide examples of graphs such that $\thca(G)=  \rad(G)$ and $\thca(G)= \gamma(G)$ as well as an example of a graph where $\thca(G) < \min\{ \rad(G),\gamma(G)\}$.

\begin{table}
\begin{center}
 \begin{tabular}{|c ||c |c |c|}\hline
$G$ &                       $\rad(G)$ & $\gamma(G)$ & $\thca(G)$\\\hline\hline
\begin{tikzpicture}[scale =.4]
 \foreach \x in {1,...,8}{\vertex (D\x) at (\x,-3) {};}
 \foreach \x in {1,3,5,7}{\vertex (C\x) at (\x+.5,-2) {};}
  \foreach \x in {2,6}{\vertex (B\x) at (\x+.5,-1) {};}
  \vertex (A) at (4.5,0) {};
   \draw[thick] (B6) to (A) to (B2);
    \draw[thick]  (C1) to (B2) to (C3);
    \draw[thick]  (C5) to (B6) to (C7);
    
    \draw[thick]  (D1) to (C1) to (D2);
    \draw[thick]  (D3) to (C3) to (D4);
    \draw[thick]  (D5) to (C5) to (D6);
    \draw[thick]  (D7) to (C7) to (D8);
\end{tikzpicture}   &$3$ & $ 5$ & $3=\rad(G)<\gamma(G)$ \\ \hline

\begin{tikzpicture}[scale =.4]
 \foreach \n in {1,...,9}{\vertex (x\n) at (\n,0) {};}
 \foreach \m/\n in {1/2,2/3,3/4,4/5,5/6,6/7,7/8,8/9}{
   \draw[thick]  (x\m) to (x\n);}
\end{tikzpicture} &$4$ & $3$ & $3=\gamma(G)<\rad(G)$ \\ \hline

\begin{tikzpicture}[scale =.4]
 \vertex (A) at (0,0) {};
 \foreach \x in {1,...,7}{
     \vertex (B\x) at (\x,.3*\x) {};
     \vertex (C\x) at (\x,-.3*\x) {};
     }
  \foreach \x in {1,2}{
     \vertex (D\x) at (-\x,.3*\x) {};
     \vertex (E\x) at (-\x,-.3*\x) {};
     \vertex (F\x) at (0,\x) {};
     \vertex (G\x) at (0,-\x) {};
     }    
  \foreach \m/\n in {2/3,3/4,4/5,5/6,6/7}{
      \draw[thick]  (B\m) to (B\n);
      \draw[thick]  (C\m) to (C\n);
 }
    \foreach \n in {B,C,D,E,F,G}{
      \draw[thick]  (A) to (\n1);
       \draw[thick]  (\n1) to (\n2);
 }
\end{tikzpicture}   &$7$ & $ 9$ & $6< \min\{\rad(G),\gamma(G)\}$ \\ \hline

\end{tabular}
\caption{Examples illustrating relationships among  $\rad(G)$, $\gamma(G)$, and $\thca(G)$.  \label{tab-ex-thca}}\vspace{-10pt}
\end{center}
\end{table}

 We can find $\thca(G)$ exactly for certain families of chordal graphs, including split graphs, full $t$-ary trees, and unit interval graphs.
A {\em split graph} is a graph whose vertices can be partitioned into a clique and an independent set. 

\begin{rem} 
Let $G$ be a connected split graph of order two or more. If $\gamma(G) = 1$, then  $\thca(G)=1$ by Remark \ref{r:c2-low}. If $\gamma(G) > 1$, then $\thca(G)=2$ by Remark~\ref{r:thca-chordal} since $\rad(G) = 2$.
\end{rem}

A \emph{full} $r$-ary tree of height $h$,  denoted by $T_{r,h}$, is a rooted tree in which each node has $r$ children unless it is at distance $h$ from the root, and the distance between a vertex and the root is at most $h$; $T_{3,2}$ is shown in Figure \ref{fig:T32}.  

   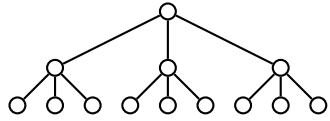
\begin{figure}[!h!]
\begin{center}
\begin{tikzpicture}[scale =.5]
     \foreach \n in {1,2,3,4,5,6,7,8,9}{\vertex (C\n) at (\n,-2) {};}
     \foreach \n in {2,5,8}{\vertex (B\n) at (\n,-1) {};}
     \vertex (A) at (5,.5) {};
     \foreach \n in {2,5,8}{ \draw[thick] (A) to (B\n);}
     \foreach \n in {1,2,3}{ \draw[thick] (B2) to (C\n);}
     \foreach \n in {4,5,6}{ \draw[thick] (B5) to (C\n);}
     \foreach \n in {7,8,9}{ \draw[thick] (B8) to (C\n);}
\end{tikzpicture}
\caption{The full ternary tree of height two $T_{3,2}$. \label{fig:T32}}\vspace{-12pt}
\end{center}
\end{figure}

\begin{prop}\label{p:r-ary-tree}  Let $T_{r,h}$ be the full $r$-ary tree of height $h$ with $h,r\geq 2$. Then $\thca(T_{r,h})=h$.
\end{prop}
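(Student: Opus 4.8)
The plan is to route everything through the chordal reduction. Since $T_{r,h}$ is a tree it is chordal and cop-win, so $c(T_{r,h})=1$, and Remark \ref{r:thca-chordal} (which itself rests on Theorem \ref{t:thcx-chord}) gives
\[\thca(T_{r,h})=\min_{1\le k\le\gamma(T_{r,h})}k\,\rad_k(T_{r,h}).\]
Thus it suffices to show that this minimum equals $h$. For the upper bound I would simply take $k=1$: the root is the unique center of $T_{r,h}$, since its eccentricity is $h$ (the farthest vertices from it are the leaves) while every non-root vertex $v$ at level $i\ge 1$ sees a leaf in another branch at distance $i+h\ge h+1$. Hence $\rad_1(T_{r,h})=\rad(T_{r,h})=h$ and $\thca(T_{r,h})\le 1\cdot h=h$.

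The heart of the argument is the matching lower bound $k\,\rad_k(T_{r,h})\ge h$ for every admissible $k$. First I would prove a counting lemma: for any vertex $v$ and any integer $d$ with $0\le d<h$, the number of leaves at distance at most $d$ from $v$ is at most $r^{d}$. The idea is that the leaves within distance $d$ of $v$ are exactly those lying in the subtree rooted at the ancestor $a$ of $v$ of smallest level $j^\ast$ for which the up-then-down path from $v$ to $a$ and back to a leaf has length at most $d$; solving $(\operatorname{lev}(v)-j^\ast)+(h-j^\ast)\le d$ gives $j^\ast=\lceil(\operatorname{lev}(v)+h-d)/2\rceil$, and that subtree contains $r^{\,h-j^\ast}$ leaves. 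A short estimate shows $h-j^\ast\le (m+d)/2\le d$, where $m=h-\operatorname{lev}(v)$ is the height of $v$ (and if $m>d$ then $v$ reaches no leaf at all, so the count is $0$); hence the bound $r^{d}$.

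With the lemma in hand, suppose $|S|=k$ and $\ecc(S)=d=\rad_k(T_{r,h})$. Because $\rad_1=h$ and $\rad_k$ is non-increasing in $k$ we have $d\le h$; the case $d=h$ gives $k\,d\ge h$ at once, so assume $d<h$. All $r^{h}$ leaves lie within distance $d$ of $S$, yet each vertex of $S$ accounts for at most $r^{d}$ of them by the lemma, so $r^{h}\le k\,r^{d}$, that is $k\ge r^{\,h-d}\ge 2^{\,h-d}$. It then remains to verify the elementary inequality $2^{\,h-d}\,d\ge h$ for integers $1\le d\le h$. Writing $j=h-d$ and $g(j)=2^{j}(h-j)$, one checks $g(j+1)\ge g(j)$ for $0\le j\le h-2$, so $g$ is non-decreasing on $\{0,\dots,h-1\}$ and attains its minimum at $j=0$ or $j=1$, where $g(0)=h$ and $g(1)=2(h-1)\ge h$ since $h\ge 2$. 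Therefore $k\,\rad_k(T_{r,h})\ge 2^{\,h-d}\,d\ge h$ for all $k$, and combining with the $k=1$ evaluation the minimum is exactly $h$.

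The main obstacle is the counting lemma together with the implicit optimization over where each center sits: one must confirm that no placement lets a single vertex cover more than $r^{d}$ leaves within distance $d$, which is precisely what forces the exponential lower bound $k\ge r^{\,h-d}$ and ultimately pins $\thca(T_{r,h})$ at the $k=1$ value $h$. Everything else (the chordal reduction, the eccentricity computation, and the final numerical inequality) is routine once this lemma is established.
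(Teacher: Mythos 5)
Your proof is correct and follows essentially the same route as the paper's: both reduce capture time to eccentricity on trees, take $k=1$ for the upper bound, and get the lower bound by counting leaves---each vertex of $S$ covers at most $r^d$ leaves within distance $d$, so covering all $r^h$ leaves forces $k\ge r^{h-d}\ge 2^{h-d}$ and hence $k\,\rad_k(T_{r,h})\ge 2^{h-d}d\ge h$. Your write-up is a more carefully justified rendering of the paper's compressed argument (you prove the covering lemma via the ancestor-level computation and replace the paper's terse final inequality with the monotonicity check of $2^{j}(h-j)$), but the underlying idea is identical.
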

\begin{proof}
Consider  a set $S\subseteq V(T_{r,h})$ of cardinality $k\geq 2$ such that $\capt(T_{r,h};S)=\capt(T_{r,h},k)$. If $k\capt(T_{r,h};S ) \leq h$, then each of the leaves must be within distance $\frac{h}{k}$ of a vertex in $S$. This requires at least $r^{h - \frac{h}{k}}\geq r^{\frac{h}{2}}$ vertices; it is most efficiently done by taking every vertex at depth $h-\lf\frac{h}{k}\rf$, so 
\[k \capt(T_{r,h};S ) \geq 2\cdot 2^{\frac{h}{2}} > h.\qedhere\]
\end{proof}

\begin{figure}[!h]
\centering
\begin{tikzpicture}[scale=.8]
\begin{scope}[very thick, every node/.style={sloped,allow upside down}]
\vertex (A) at (0,0){};
\vertex (B) at (-1.5, -0.75) {};
\vertex (C) at (-3, -1.55){};
\vertex (D) at (1.5, -0.75){};
\vertex (E) at (3,-1.55){};
\vertex (F) at (0,1.75){};
\draw(A) to (B);
\draw(A) to (D);
\draw(A) to (F);
\draw(C) to (B);
\draw(D) to (E);
\end{scope}
\end{tikzpicture}\\
\caption{An interval graph  $T$ with $\thca(T)<\gamma(T)$.    \label{f:interval}}
\end{figure}

As shown in \cite{product-power-throt}, the graph $T$ in Figure \ref{f:interval} is an interval graph (and chordal).  As with power domination, $\thca(T)=2<3=\gamma(T)$. 
We show that $\thpda(G)=\gamma(G)$ for a unit interval graph $G$; some of the ideas come from  the proof that $\thpda(G)=\gamma(G)$ in \cite{product-power-throt}, but there are substantial differences.  Instead of partitioning the vertices by the round in which they are observed, we partition the vertices of $G$ by their distance from $S\subset V(G)$: Define $S^{(k)}=\{v:\dist(S,v)=k\}$.  Then $V(G)=S\du S^{(1)}\du \dots\du S^{(\ecc(S))}$.
For a unit interval graph $G$, fix a unit representation of  $G$ with induced order $<$.  
For any vertex $v$, define $L(v)$ to be the least vertex in $N[v]$, $L^1(v)=L(v)$ and $L^{k+1}(v)=L(L^k(v))$.  Define  $R(v)$ to be the greatest vertex in $N[v]$, and define $R^k(v)$ analogously.

\begin{lem}
 \label{unit-int-lem-c}
 Let $G$ be a connected unit interval graph of order at least two with a fixed unit representation and induced order, and let   $S=\{x\}\subset V(G)$.  For $k=1,\dots,\ecc(S)$, $\{L^{k}(x),R^{k}(x)\}$ dominates $S^{(k)}\cup S^{(k+1)}$, and $L^{1}(x)$ dominates $x$. 
 \end{lem}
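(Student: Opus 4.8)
The plan is to prove this by induction on $k$, exploiting the structure of the unit interval representation and the key monotonicity property that $L$ and $R$ enjoy with respect to the induced order $<$. First I would establish the base case. Since $G$ is connected, $S^{(1)}=N(x)\setminus\{x\}$, and by definition $L^1(x)=L(x)$ is the least vertex in $N[x]$ while $R^1(x)=R(x)$ is the greatest vertex in $N[x]$. The fundamental fact about unit interval graphs with an induced order is that for any vertex $v$, the closed neighborhood $N[v]$ consists of a consecutive block of vertices in the order $<$, i.e.\ $N[v]=\{u : L(v)\le u\le R(v)\}$. I would first isolate this ``consecutiveness'' observation, since every later step leans on it. From consecutiveness, $L^1(x)$ dominates $x$ trivially (as $x\in N[L(x)]$), and I would check that $\{L^1(x),R^1(x)\}$ dominates $S^{(1)}\cup S^{(2)}$ by showing every vertex at distance $1$ or $2$ from $x$ falls inside $N[L(x)]\cup N[R(x)]$.

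The inductive step is where the real work lies, and I expect it to be the main obstacle. Assuming $\{L^k(x),R^k(x)\}$ dominates $S^{(k)}\cup S^{(k+1)}$, I want to show $\{L^{k+1}(x),R^{k+1}(x)\}$ dominates $S^{(k+1)}\cup S^{(k+2)}$. The geometric intuition is that $L^{k+1}(x)=L(L^k(x))$ reaches one ``layer'' further to the left and $R^{k+1}(x)$ reaches one layer further to the right, so together they sweep across the vertices at distance $k+1$ and $k+2$. The crucial claim I would prove is a monotone reachability statement: that $L^k(x)$ is precisely (or at least a lower bound for) the leftmost vertex reachable from $x$ within $k$ steps, and symmetrically $R^k(x)$ is the rightmost, so that $S^{(1)}\cup\cdots\cup S^{(k)}$ together with $x$ is contained in the interval $[L^k(x),R^k(x)]$. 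Given this, a vertex $w\in S^{(k+2)}$ lies at distance $k+2$ from $x$, hence is adjacent to some vertex $w'$ at distance $k+1$; by consecutiveness and the position of $w'$ relative to the interval $[L^k(x),R^k(x)]$, one deduces $w$ is dominated by either $L^{k+1}(x)$ or $R^{k+1}(x)$ depending on which side of the sweep it lies.

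The delicate point is handling the two directions cleanly without casework collapsing: a vertex in $S^{(k+1)}$ could in principle be reached going left or right, and I need to argue that whichever extreme it sits nearest is covered by the corresponding $L^{k+1}$ or $R^{k+1}$. I would organize this by proving the symmetric pair of lemmas ``$L^{k+1}(x)$ dominates all vertices $v$ with $v\le$ some threshold in $S^{(k+1)}\cup S^{(k+2)}$'' and its mirror for $R^{k+1}(x)$, then arguing the two thresholds overlap or abut so their union covers the whole layer. The unit-length condition (as opposed to arbitrary interval graphs) is essential here: it guarantees that $L$ and $R$ respect the order monotonically, i.e.\ $u<v$ implies $L(u)\le L(v)$ and $R(u)\le R(v)$, which is exactly what prevents the sweep from ``skipping over'' a vertex. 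I would state and use this monotonicity explicitly as the technical engine of the induction.

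I expect the main obstacle to be making the reachability claim $S^{[k]}\subseteq[L^k(x),R^k(x)]$ fully rigorous, since it requires showing not just that $L^k$ and $R^k$ are reachable in $k$ steps but that nothing reachable in $k$ steps lies outside that interval; this is where I would invoke monotonicity of $L,R$ and a short induction nested inside the main one. Once that interval-containment is secured, dominating $S^{(k+1)}\cup S^{(k+2)}$ reduces to the consecutiveness of closed neighborhoods together with the fact that advancing one more application of $L$ (resp.\ $R$) extends the covered interval by exactly one adjacency layer, giving the result.
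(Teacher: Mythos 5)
Your proposal is sound: consecutiveness of closed neighborhoods and the monotonicity ($u<v$ implies $L(u)\le L(v)$ and $R(u)\le R(v)$) both hold for a unit interval graph with its induced order, and together with the two\-/sided reachability claim ($L^k(x)$ and $R^k(x)$ lie within distance $k$ of $x$, and every vertex within distance $k$ lies in the interval $[L^k(x),R^k(x)]$) they do yield the lemma. The paper's own proof runs on the same engine but with a different, shorter organization: there is no induction on $k$ at all. For $v\in S^{(i)}$ with $v\le x$ it takes a shortest path $x=v_0,v_1,\dots,v_i=v$, notes it may be assumed monotone ($v=v_i<\dots<v_1<x$), and replaces it wholesale by the greedy path $(x,L^1(x),\dots,L^{i-1}(x),v)$; taking $i=k+1$ gives $v\in N[L^{k}(x)]$ immediately, and taking $i=k$ gives $L^{k}(x)\le v<L^{k-1}(x)$, whence $v\in N[L^{k}(x)]$ by consecutiveness. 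That path-replacement step is precisely where your monotonicity lemma is hiding: it is what justifies $L^{j}(x)\le v_j$ along the path and hence the adjacency of $L^{i-1}(x)$ to $v$, which the paper compresses into the phrase ``from the definition of $L^k(x)$.'' The trade-off is that the paper's argument is direct and avoids your nested induction, while yours isolates the structural facts as explicit, reusable lemmas. One point to watch in your version: the containment $S^{[k]}\subseteq[L^k(x),R^k(x)]$ alone only gives the lower bound $v\ge L^{k}(x)$ for $v$ on the left side; to apply consecutiveness of $N[L^k(x)]$ you also need the upper bound $v<L^{k-1}(x)$, i.e., that everything in $[L^{k-1}(x),x]$ is already within distance $k-1$ (which follows from reachability of the $L^j(x)$ plus consecutiveness). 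You flag both directions in your last paragraph, so the plan closes, but the second direction must genuinely be carried out, not just the containment.
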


\begin{proof}
It is is immediate that $x$ is dominated by $L^{1}(x)$.
Let $v\in S^{(i)}$ with $i\ge 1$, so there is a path $(x=v_0,v_1,\dots, v_i=v)$ and $v_j\in  S^{(j)}$ for $j=0,\dots,i$. 
Suppose $v\le x$, so $v=v_i<\dots<v_1<x=v_0$. From the definition of $L^k(x)$, $(x=v_0,L^1(x),\dots,L^{i-1}(x), v_i=v)$ is also a path from $x$ to $v$.  By setting $i=k+1$,  we see that $L^{k}(x)$ dominates $v$. Now suppose $i=k$. Then $v$ is a neighbor of $L^{k-1}(x)$ so $L^{k}(x)\le v < L^{k-1}(x)$.  Since neighborhoods are consecutive and  $L^{k-1}(x)\in N[L^{k}(x)]$, $v \in N[L^{k}(x)]$.
Thus $L^{k}(x)$ dominates all  $v\in  S^{(k)}\cup S^{(k+1)}$ such that $v<x$.  The case of $v>x$ is handled by $R^{k}(x)$.
\end{proof}

 \begin{thm}
 \label{t:unit-int-c}
 If $G$ is a unit interval graph, then $\thca(G) = \gamma(G)$.
 \end{thm}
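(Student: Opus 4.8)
The plan is to prove that $\thca(G) \le \gamma(G)$ by exhibiting a dominating set $S$ of size $\gamma(G)$ that achieves capture time $1$ (since $\thca(G,k) = k \cdot \capt_k(G)$ and a dominating set of size $\gamma(G)$ gives $\capt_{\gamma(G)}(G) \le 1$, yielding $\thca(G) \le \gamma(G)$), and then to prove the matching lower bound $\thca(G) \ge \gamma(G)$. The upper bound is routine: any dominating set $S$ places a cop within distance one of the robber on its first move, so $\capt(G;S) \le 1$ and hence $\thca(G) \le \gamma(G) \cdot 1 = \gamma(G)$. The substance of the theorem is therefore entirely in the lower bound, where I must rule out the possibility that some smaller set of cops can still capture quickly enough that the product $k \cdot \capt_k(G)$ dips below $\gamma(G)$.

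For the lower bound I would argue that for every $k$ with $c(G) \le k \le \gamma(G)$, we have $k \cdot \capt_k(G) \ge \gamma(G)$. The geometry of the unit representation is the key tool here, via Lemma~\ref{unit-int-lem-c}. The idea is that when a set $S$ of $k$ cops captures the robber in $\capt_k(G;S) = p$ rounds, every vertex must be reachable from $S$ within $p$ steps, so $V(G) = \bigcup_{x \in S} \bigcup_{j=0}^{p} \{v : \dist(x,v) = j\}$; but Lemma~\ref{unit-int-lem-c} shows that for a single cop at $x$, the vertices $\{L^j(x), R^j(x) : 1 \le j \le p\}$ form a dominating set of $N^{\le p}[x]$ of size at most $2p$. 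Taking the union over the $k$ cops produces a dominating set of $G$ of size at most $2kp$—but I need the sharper bound that this yields a dominating set showing $\gamma(G) \le k \cdot \capt_k(G)$, so I must squeeze out the factor of $2$, presumably by observing that the consecutive $L^j(x)$ (and $R^j(x)$) values overlap in their domination so that roughly $p$ vertices per cop suffice, giving $\gamma(G) \le k \cdot p = k \cdot \capt_k(G)$.

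More precisely, I would fix a set $S$ of $k$ cops realizing $\capt_k(G) = p$, so $\ecc(S) \le p$ (a cop can only capture a robber it can reach). For each cop $x \in S$, Lemma~\ref{unit-int-lem-c} tells us that the set $\{L^j(x) : 1 \le j \le \ecc_x\} \cup \{R^j(x) : 1 \le j \le \ecc_x\}$ dominates all vertices within distance $\ecc_x$ of $x$, where $\ecc_x$ is the eccentricity of $x$ restricted to its capture territory. I want to convert this into a dominating set of size at most $k \cdot p$. The natural approach is to show that, for each cop, one can dominate its distance-$\le p$ neighborhood with at most $p$ well-chosen vertices: since $L^j(x)$ dominates $S_x^{(j)} \cup S_x^{(j+1)}$ (the distance-$j$ and distance-$(j+1)$ shells on the left), choosing $L^1(x), L^3(x), L^5(x), \dots$ and symmetrically $R^1(x), R^3(x), \dots$ covers all left and right shells with about $p$ vertices total, exploiting the two-shell coverage to halve the naive count. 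The union over all $k$ cops then dominates $V(G)$, giving $\gamma(G) \le k \cdot p$.

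\textbf{The main obstacle} I anticipate is making the counting in the lower bound tight enough to get exactly $k \cdot \capt_k(G)$ rather than $2 k \cdot \capt_k(G)$. The two-shells-per-vertex coverage in Lemma~\ref{unit-int-lem-c} is precisely what should close this gap, but the bookkeeping must handle the boundary shells, the two directions (left via $L$ and right via $R$) sharing the cop's own shell $S_x^{(0)} = \{x\}$, and the fact that a single cop at distance $\ecc_x$ from its farthest captured vertex contributes shells $S_x^{(1)}, \dots, S_x^{(\ecc_x)}$ that must be covered by at most $\ecc_x$ vertices. I expect that interleaving the left and right choices carefully—so that the distance-$j$ shells on each side are each covered by a vertex of the form $L^{j-1}(x)$ or $R^{j-1}(x)$ and alternate parities avoid double-counting—yields exactly one dominating vertex per shell per cop, establishing $\gamma(G) \le \sum_{x \in S} \ecc_x \le k \cdot p$, hence $\thca(G) \ge \gamma(G)$, and combined with the upper bound completes the proof.
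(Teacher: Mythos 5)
Your proposal is correct and takes essentially the same approach as the paper's proof: the immediate upper bound $\thca(G)\le\gamma(G)$, plus a lower bound obtained exactly as in the paper by using Lemma~\ref{unit-int-lem-c} to convert any $k$-cop placement $S$ into a dominating set of size at most $k\cdot\capt_k(G)$ built from alternating-parity shells $\bigcup_{x\in S}\{L^{j}(x),R^{j}(x)\}$. The one piece of bookkeeping you leave open---the parity problem when the number of rounds is odd---is handled in the paper precisely as you anticipate: put $S$ itself into the dominating set (it covers shells $0$ and $1$) and then use the even-indexed sets $T^2(S),T^4(S),\dots,T^{t-1}(S)$, which keeps the count at $kt$.
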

 
 \begin{proof}  
Let $G$ be a connected unit interval graph of order at least two with a fixed unit representation and induced order. 
It suffices to show $\gamma(G) \le   \thcx(G,k)  $ for $1\le k<\gamma(G)$.  Let  $t=\rad_k(G)$, and choose $S\subset V(G)$ such that $|S|=k$ and $\ecc(S)=t$; note that $t\ge 2$.  Define $T^k(S)=\cup_{x\in S}\{L^{k}(x),R^{k}(x)\}$. We consider two cases,  $t$ is even and $t$ is odd.

Assume first that $t$ is even. Then $\hat S=T^1(S) \cup T^3(S) \cup \dots \cup T^{t-1}(S)$ dominates $V(G)=S\cup S^{(1)}\cup \dots\cup S^{(t)}$  by Lemma~\ref{unit-int-lem-c}.  Then $\gamma(G)\le |\hat S|\le |S|2\frac t 2=kt=\thcx(G,k)$.

Now assume that $t$ is odd.  Let $\hat S = S \cup S^{(2)} \cup S^{(4)} \cup \dots \cup S^{(t-1)}$.   Then $\hat S=S\cup T^2(S) \cup T^4(S) \cup \dots \cup T^{t-1}(S)$ dominates $V(G)=S\cup S^{(1)}\cup \dots\cup S^{(t)}$ by  Lemma~\ref{unit-int-lem-c} and since the vertices in $S^{(1)}$ are dominated by   $S$ by definition. Then $\gamma(G)\le |\hat S|\le |S|\lp1+2\frac {t-1} 2\rp=kt=\thcx(G,k)$.
  \end{proof}


\section{Product throttling  for power domination revisited}\label{s:prod-power-2}
In this section we determine  $\thpda(G)$, the product power throttling number with no initial cost, for additional families of graphs $G$ and explore the definition of product throttling with initial cost,  $\thpdx(G)$.

\subsection{Determination of $\thpda(G)$ for additional families of graphs}\label{sfamilies}
$\null$ 

The values of  $\thpda(G)$ for some families of graphs were established in \cite{product-power-throt} and several families of graphs for which $\thpda(G)=\gamma(G)$ were presented (see Section \ref{s:prod-power-a}).   In this section, we establish $\thpda(G)$ for  some additional families of graphs.
We also construct  infinite families of graphs where $\thpda(G)\ne \gamma(G)$. For these families, $\thpda(G)<\min\{\gamma(G),\pd(G)\ppt(G)\}$.

We need the following definitions.  
For $j\ge 2$ and $d\ge 4$, construct the {\em $j,d$-generalized necklace $N_{j,d}$} by connecting $j$ copies of $K_d-e$ arranged cyclically to create a $d-1$ regular graph; $N_{3,5}$ is shown in Figure \ref{f:N35}.

\begin{figure}[h!]
\centering
\begin{tikzpicture}[scale=.5]
		\vertex (A) at (0,0) {};
		\vertex (B) at (1.25, 0) {};
		\vertex (C) at (2, 1.2) {};
		\vertex (D) at (.625,2) {};
		\vertex (E) at (-.625, 1.2) {};
		
		\vertex (F) at (3,4.5) {};
		\vertex (G) at (4.25, 4.5) {};
		\vertex (H) at (5, 5.7) {};
		\vertex (I) at (3.625,6.5) {};
		\vertex (J) at (2.375, 5.7) {};
		
		\vertex (K) at (-2.25,4.5) {};
		\vertex (L) at (-3.5, 4.5) {};
		\vertex (M) at (-4.25, 5.7) {};
		\vertex (N) at (-2.875,6.5) {};
		\vertex (O) at (-1.5, 5.7) {};
		
		\draw[thick] (A) to (C); 
		\draw[thick] (E) to (B); 
		\draw[thick] (A) to (D); 
		\draw[thick] (B) to (D); 
		
		\draw[thick] (F) to (H); 
		\draw[thick] (F) to (I); 
		\draw[thick] (I) to (G); 
		\draw[thick] (H) to (J); 
		
		\draw[thick] (K) to (M); 
		\draw[thick] (K) to (N); 
		\draw[thick] (L) to (N); 
		\draw[thick] (M) to (O); 
		
		\draw[thick] (A) to (B); 
		\draw[thick] (B) to (C); 
		\draw[thick] (C) to (D); 
		\draw[thick] (D) to (E); 
		\draw[thick] (E) to (A); 
		\draw[thick] (F) to (G); 
		\draw[thick] (G) to (H); 
		\draw[thick] (H) to (I); 
		\draw[thick] (I) to (J); 
		\draw[thick] (J) to (F); 
		\draw[thick] (K) to (L); 
		\draw[thick] (L) to (M); 
		\draw[thick] (M) to (N); 
		\draw[thick] (N) to (O); 
		\draw[thick] (O) to (K); 
		\draw[thick] (L) to (E); 
		\draw[thick] (G) to (C); 
		\draw[thick] (J) to (O); 
	\end{tikzpicture}

\caption{The 3,5-generalized necklace $N_{3,5}$.  \label{f:N35}}\vspace{-8pt}
\end{figure}
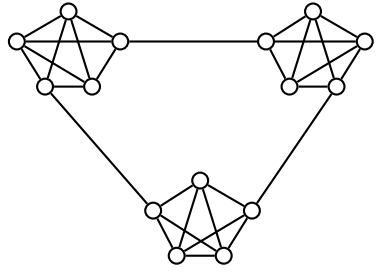

The results in the next remark follow from results stated in Section \ref{s:prod-power-a}. 
\begin{rem} Let $n\ge 2$.
\ben[$(1)$]
\item $\thpda(Q_d)=\gamma(Q_d)=2^{2^\ell-\ell-1}$ for $d=2^\ell-1$ using Corollary \ref{c:squeeze} because the order of $Q_d$ is $2^d$, $\Delta(Q_d)=d$, and for  $d=2^\ell-1$, $\gamma(Q_d)=2^{2^\ell-\ell-1}$ by \cite{HL93}.
\item $\thpda(P_r\circ K_1)=2\lc  \frac r 3\rc$ by Theorem \ref{half-ratio} and $\gamma(P_n)=\lc \frac n 3\rc$. 
\item $\thpda(C_r\circ K_1)=2\lc  \frac r 3\rc$ by Theorem \ref{half-ratio} and $\gamma(C_n)=\lc \frac n 3\rc$. 
\item $\thpda(N_{j,d})=j$ by Corollary \ref{c:squeeze} because the order of $N_{j,d}$ is $jd$, $\Delta(N_{j,d})=d-1$, and $\gamma(N_{j,d})=\lc \frac {jd} {(d-1)+1}\rc=j.$
\een
\end{rem}

We now construct  a family $G(n,s,m)$ of  $2$-connected graphs for which $\thpda(G(n,s,m))$ is less than both $\gamma(G(n,s,m))$ and $\pd(G(n,s,m)) \ppt(G(n,s,m))$. These examples lead to a family of $r$-connected graphs with the same properties, for any integer $r\geq 2$. 

 Let $K_n$ be the complete graph on  vertices $\{u_1, u_2, \ldots, u_n\}$. Replace each edge $u_iu_j$ of $K_n$ with $s\ge 1$ disjoint paths of length $m\ge 1$ between $u_i$ and $u_j$, for $1\leq i<j\leq n$. Call the resulting graph $G(n, s, m)$;    Figure~\ref{g-ns} shows $G(3,3,4)$.

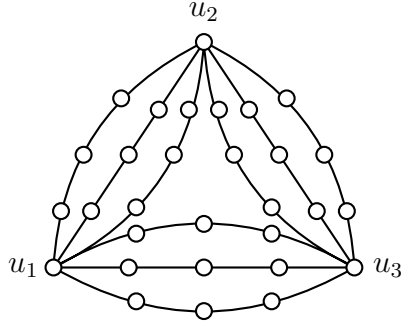
\begin{figure}[h] 
\begin{center}
\begin{tikzpicture}

\node(A) at (-.4,0) {$u_1$};
\node(B) at (2, 3.4) {$u_2$};
\node(C) at (4.45, 0) {$u_3$};

\filldraw [fill=white, thick]
(0,0) circle [radius=3pt]
(2,3) circle [radius=3pt, thick]
(4,0) circle [radius=3pt, thick]

(1,0) circle [radius=3pt]
(2,0) circle [radius=3pt]
(3,0) circle [radius=3pt]

(1.1,.45) circle [radius=3pt]
(2,.58) circle [radius=3pt]
(2.9,.45) circle [radius=3pt]

(1.1,-.45) circle [radius=3pt]
(2,-.58) circle [radius=3pt]
(2.9,-.45) circle [radius=3pt]

(.5,.75) circle [radius=3pt]
(1,1.5) circle [radius=3pt]
(1.4,2.1) circle [radius=3pt]

(1.1,.8) circle [radius=3pt]
(1.6,1.5) circle [radius=3pt]
(1.8,2.1) circle [radius=3pt]

(.1,.75) circle [radius=3pt]
(.4,1.5) circle [radius=3pt]
(.9,2.25) circle [radius=3pt]

(3.5,.75) circle [radius=3pt]
(3,1.5) circle [radius=3pt]
(2.6,2.1) circle [radius=3pt]

(2.9,.8) circle [radius=3pt]
(2.4,1.5) circle [radius=3pt]
(2.2,2.1) circle [radius=3pt]

(3.9,.75) circle [radius=3pt]
(3.6,1.5) circle [radius=3pt]
(3.1,2.25) circle [radius=3pt]
;
\begin{scope}[on background layer]
\draw[thick] (0,0)--(2,3)--(4,0)--(0,0);
\draw[thick] (0,0) to  [bend right] (2,3) to [bend right] (4,0) to [bend right] (0,0);
\draw[thick] (0,0) to  [bend left] (2,3) to [bend left] (4,0) to [bend left] (0,0);

\end{scope}

\end{tikzpicture}
\end{center}
\caption{$G(3,3,4)$ is formed by replacing each edge of  $K_3$  by $3$ paths of length $4$. }

\label{g-ns}
\end{figure}

\begin{prop}\label{p:Gnsm} Let $n\ge 2$, $s\ge 3$, and $m\ge 4$. Then 
\ben[$(1)$]
\item $\gamma(G(n,s,m))\geq s \frac{n(n-1)}{2}  \lceil \frac{m-3}{3}\rceil$. \label{gamma-gnsm}
\item  $\pd(G(n,s,m))=n-1$, $\ppt(G(n,s,m))=m$, and\\ $\pd(G(n,s,m))\cdot \ppt(G(n,s,m))=(n-1)m$. \label{pd-gnsm} 
\item  $\thpda(G(n,s,m))\le n\lc\frac{m-1}2\rc$. \label{thx-gnsm} 
\een
For $n,m\ge 5$, $\gamma(G(n,s,m))\geq \pd(G(n,s,m))\cdot \ppt(G(n,s,m))> \thpda(G(n,s,m))$. \end{prop}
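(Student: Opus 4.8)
The plan is to derive the final chain of inequalities directly from Parts \eqref{gamma-gnsm}, \eqref{pd-gnsm}, and \eqref{thx-gnsm}, which I assume are already in hand. By Part \eqref{pd-gnsm} the middle quantity is simply $\pd(G(n,s,m))\cdot\ppt(G(n,s,m))=(n-1)m$, so the whole statement reduces to the two numerical inequalities $\gamma(G(n,s,m))\ge (n-1)m$ and $(n-1)m>\thpda(G(n,s,m))$, each of which I would verify using only $n,m\ge 5$ and $s\ge 3$.

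For the left inequality I would start from the lower bound in Part \eqref{gamma-gnsm}, namely $\gamma(G(n,s,m))\ge s\frac{n(n-1)}{2}\lceil\frac{m-3}{3}\rceil$. Using $s\ge 3$ together with the crude estimate $\lceil\frac{m-3}{3}\rceil\ge\frac{m-3}{3}$, this is at least $\frac{n(n-1)(m-3)}{2}$. Thus it suffices to show $\frac{n(n-1)(m-3)}{2}\ge (n-1)m$; cancelling the positive factor $n-1$ and rearranging, this is equivalent to $(n-2)(m-3)\ge 6$, which holds with room to spare whenever $n,m\ge 5$ since then $(n-2)(m-3)\ge 3\cdot 2=6$.

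For the strict right inequality I would invoke the upper bound $\thpda(G(n,s,m))\le n\lceil\frac{m-1}{2}\rceil$ from Part \eqref{thx-gnsm}, and note $\lceil\frac{m-1}{2}\rceil\le\frac m2$, so $\thpda(G(n,s,m))\le\frac{nm}{2}$. It then suffices to check $(n-1)m>\frac{nm}{2}$, equivalently $2(n-1)>n$, equivalently $n>2$, which clearly holds for $n\ge 5$. Chaining the two bounds gives $\gamma(G(n,s,m))\ge(n-1)m=\pd(G(n,s,m))\ppt(G(n,s,m))>\thpda(G(n,s,m))$, as required.

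Since the real content is already packaged in Parts \eqref{gamma-gnsm}--\eqref{thx-gnsm}, there is no serious obstacle in this concluding step; it is essentially bookkeeping. The only points requiring mild care are the two ceiling estimates $\lceil\frac{m-3}{3}\rceil\ge\frac{m-3}{3}$ and $\lceil\frac{m-1}{2}\rceil\le\frac m2$, and confirming that the threshold $n,m\ge 5$ is exactly what makes $(n-2)(m-3)\ge 6$ hold; I would double-check the boundary case $n=m=5$ to be sure the inequalities are not merely asymptotic.
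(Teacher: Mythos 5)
Your concluding arithmetic is correct, and it is essentially the paper's own computation in a slightly different packaging: the paper deduces $s\frac{n(n-1)}{2}\cdot\frac{m-3}{3}\ge (n-1)m$ from $nm\ge 2m+3n$, and $(n-1)m>n\lceil\frac{m-1}{2}\rceil$ from $nm>n+2m$, while you use the equivalent reformulations $(n-2)(m-3)\ge 6$ and $\lceil\frac{m-1}{2}\rceil\le\frac{m}{2}$. The genuine gap is that this chain is only the final sentence of the proposition. Items (1), (2), and (3) are themselves claims of the statement being proved, not previously established results you may cite, and they carry all of the graph-theoretic content; your proposal explicitly assumes them (``which I assume are already in hand''). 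As written, your argument establishes nothing about the graph $G(n,s,m)$ itself --- it only shows that the three bounds, once known, combine as claimed, which is the one routine step of the paper's proof.

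Concretely, a complete proof must supply the following. For (1): each of the $s\binom{n}{2}$ paths replacing an edge of $K_n$ has $m-3$ internal vertices at distance at least two from both endpoints $u_i,u_j$; these can only be dominated by vertices of that same path, so any dominating set contains at least $\lceil\frac{m-3}{3}\rceil$ vertices per path. For (2): one must check that $\{u_1,\dots,u_{n-1}\}$ is a power dominating set whose last observed vertex is $u_n$ in round $m$, and --- the nontrivial direction --- that $\pd(G(n,s,m))\ge n-1$: the paper's exchange argument shows that a minimum power dominating set avoiding both $u_i$ and $u_j$ must contain at least $s-1\ge 2$ vertices on the paths joining them, and replacing those vertices by $u_i$ yields a strictly smaller power dominating set, a contradiction; this also pins down all minimum power dominating sets as $\{u_1,\dots,u_n\}\setminus\{u_i\}$, which is what justifies $\ppt(G(n,s,m))=m$ (since $\ppt$ is computed over minimum power dominating sets). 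For (3): with all of $\{u_1,\dots,u_n\}$ blue, propagation runs inward from both ends of every path, so $\ppt(G(n,s,m);\{u_1,\dots,u_n\})=\lceil\frac{m-1}{2}\rceil$, giving $\thpda(G(n,s,m))\le n\lceil\frac{m-1}{2}\rceil$. Without these three arguments the proposition is not proved.
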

\begin{proof} \eqref{gamma-gnsm}: Since the domination number of a path is the ceiling of its number of vertices divided by 3 and $m+1-4$ vertices are not accessible from the original vertices of the $K_n$, any dominating set  of $G(n,s,m)$ will have to contain at least  $\lceil \frac{m-3}{3}\rceil$ vertices from each of the $s$ paths between $u_i$ and $u_j$ for $1\leq i<j\leq n$. 

\eqref{pd-gnsm}: Note that $\{u_1, u_2, \ldots, u_{n-1}\}$ is a power dominating set of $G(n,s,m)$, because each path between vertices $u_i$ and $u_j$ will have a degree 2 neighbor of a blue endpoint turn blue in the first round.  To show that $\pd(G(n,s,m))\geq n-1$, if $S$ is a minimum power dominating set that contains neither $u_i$ nor $u_j$, then $S$ must contain at least $s-1\ge 2$ vertices on the paths between them, and replacing these  vertices by $u_i$ results in a power dominating set $S'$ with $|S'|< |S|$, contradicting the minimality of $S$.  Thus the only minimum power dominating sets are $\{u_1, u_2, \ldots, u_n\}\setminus\{u_i\}$ for $i=1,\dots,n$. To evaluate $\ppt(G(n,s,m);\{u_1,\dots,u_{n-1}\})$, note that the last vertex to be observed will be $u_n$ in round  $m$.

 \eqref{thx-gnsm}: Note that $\ppt(G(n,s,m);\{u_1,\dots,u_{n}\})=\lc \frac {m-1} 2\rc$, so $\thpda(G(n,s,m);\{u_1,\dots, u_{n}\})=n\lc \frac {m-1} 2\rc$.

Let $n,m\ge 5$.  Then $nm\ge 2m+3n$, so $n(n-1)(m-3)\ge 2(n-1)m$.  Since $s\ge 3$,  $s \frac{n(n-1)}{2}   \frac{m-3}{3}\ge (n-1)m$.  Also $nm> n+2m$ implies  $(n-1)m>\frac{n(m+1)}2=n\lp\frac{m-1}2+1\rp>n\lc\frac{m-1}2\rc$. \end{proof}

 We now construct a family of  $(r+2)$-connected graphs for any positive integer $r\geq 1$ for which 
$\thpda(G)<\min\{\gamma(G),\pd(G)\ppt(G)\}$.

 \begin{thm}  Let $r\geq 1$ and let  $s\geq 2^{r+1}+1$. Define $H_r=G(3,s,4) \Box Q_{r}$. Then 
$\thpda(H_r)<\gamma(H_r)$, $\thpda(H_r)< \pd(H_r) \ppt(H_r)$, and $H_r$ is $(r+2)$-connected.
\end{thm}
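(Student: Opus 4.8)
The plan is to prove the three assertions separately, reusing the data computed for the base factor $G=G(3,s,4)$ in Proposition~\ref{p:Gnsm} (namely $\pd(G)=2$, $\ppt(G)=4$, $\gamma(G)\ge 3s$, and $\thpda(G)\le 6$) and lifting each through the Cartesian product with $Q_r$. For the connectivity claim I would first note that $G$ is $2$-connected (any two hubs $u_i,u_j$ are joined by $s\ge 2$ internally disjoint paths, so $G$ has no cut vertex, while its degree-$2$ interior vertices give $\kappa(G)\le 2$) and that $\delta(H_r)=\delta(G)+\delta(Q_r)=2+r$. Since $\kappa(Q_r)=r$, the standard bound $\kappa(G_1\Box G_2)\ge \kappa(G_1)+\kappa(G_2)$ for Cartesian products yields $\kappa(H_r)\ge r+2$, while $\kappa(H_r)\le \delta(H_r)=r+2$; hence $\kappa(H_r)=r+2$ and $H_r$ is $(r+2)$-connected.

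For the upper bound on $\thpda(H_r)$ I would take $S=\{u_1,u_2,u_3\}\times V(Q_r)$, of size $3\cdot 2^r$. After the domination step every vertex of every length-$4$ path is observed except its middle vertex $(p_2,q)$, and in the following round each first-path-vertex $(p_1,q)$ forces $(p_2,q)$ (its hypercube neighbors $(p_1,q')$ are first-path-vertices, already observed). Thus $\ppt(H_r;S)=2$ and $\thpda(H_r)\le 6\cdot 2^r$. For the first inequality I then lower-bound $\gamma(H_r)$ by partitioning according to the $3s$ paths: for a fixed path $P$ the middle-vertex fiber $\{(p_2^P,q):q\in V(Q_r)\}$ induces a copy of $Q_r$, a single vertex dominates at most $r+1$ of these middles, and no vertex dominates middles of two distinct paths (the relevant closed neighborhoods lie in disjoint interior-vertex fibers). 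This gives $\gamma(H_r)\ge 3s\cdot \tfrac{2^r}{r+1}$; since $s\ge 2^{r+1}+1>2(r+1)$ for $r\ge1$, we get $\gamma(H_r)>6\cdot 2^r\ge \thpda(H_r)$.

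For the second inequality the plan is to determine $\pd(H_r)=2^{r+1}$ and $\ppt(H_r)=4$, mirroring $\pd(G)=2$ and $\ppt(G)=4$ with the extra factor $|V(Q_r)|=2^r$ on the power domination number. The upper bound $\pd(H_r)\le 2^{r+1}$ and $\ppt(H_r)\le 4$ come from $\{u_1,u_2\}\times V(Q_r)$, a layerwise parallel copy of the minimum power dominating set $\{u_1,u_2\}$ of $G$: I would check that each first-path-vertex, once observed, has a unique unobserved $G$-neighbor (its hypercube neighbors being observed in the same round), so the forcing proceeds exactly as in $G$ and the fiber $\{u_3\}\times V(Q_r)$ is observed precisely in round $4$.

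The main obstacle is the lower bound $\pd(H_r)\ge 2^{r+1}$, i.e. that about two hubs per hypercube layer are unavoidable, together with the matching $\ppt(H_r)\ge 4$. The mechanism I would exploit is that each hub $(u_i,q)$ has $2s$ neighbors in the $G$-direction, so it can be observed by a force across a hypercube edge from $(u_i,q')$ only after all $2s$ first-path-vertices in layer $q'$ are already observed; the hypothesis $s\ge 2^{r+1}+1$ is tuned so that these cross-layer shortcuts are too costly to let a power dominating set shrink below $2^{r+1}$, and it also forces the last hub fiber to be reached no earlier than round $4$. Turning this heuristic ``$2s$ neighbors are too many'' into a clean counting bound on $|S|$ is the delicate step I expect to be hardest. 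Granting $\pd(H_r)=2^{r+1}$ and $\ppt(H_r)=4$, we obtain $\pd(H_r)\,\ppt(H_r)=2^{r+3}=8\cdot 2^r>6\cdot 2^r\ge \thpda(H_r)$, which together with the previous paragraphs completes the proof.
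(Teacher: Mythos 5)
Your connectivity argument (the bound $\kappa(G_1\Box G_2)\ge\kappa(G_1)+\kappa(G_2)$ of \cite{S57} applied to the $2$-connected $G(3,s,4)$ and the $r$-connected $Q_r$) and both of your witness sets (three hubs per layer giving $\thpda(H_r)\le 3\cdot 2^r\cdot 2=6\cdot 2^r$, two hubs per layer giving $\pd(H_r)\le 2^{r+1}$ and $\ppt(H_r)\le 4$) are exactly the paper's. Your treatment of the first inequality is complete and takes a genuinely different route: where the paper in effect projects onto one copy of $G(3,s,4)$ and uses $\gamma(H_r)\ge\gamma(G(3,s,4))\ge 3s\ge 3(2^{r+1}+1)$ from Proposition~\ref{p:Gnsm}(1), you count dominators of the $3s$ middle-vertex hypercube fibers inside $H_r$ itself, getting $\gamma(H_r)\ge 3s\cdot 2^r/(r+1)$, and close with $2^{r+1}+1>2(r+1)$ for $r\ge 1$. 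That counting is sound (a dominator of a middle vertex must have its $G$-coordinate in the interior of that path, interiors of distinct paths are disjoint, and a $p_2$-fiber vertex covers only the $r+1$ middles in its closed $Q_r$-neighborhood), and for large $r$ it gives a much stronger bound on $\gamma(H_r)$ than the paper needs.

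The second inequality, however, is not proved: you explicitly ``grant'' $\pd(H_r)\ge 2^{r+1}$ and $\ppt(H_r)=4$, and those are precisely the nontrivial content of this part of the theorem. The numerology leaves no slack, so the gap cannot be papered over with a weaker substitute: from $\pd(H_r)\ge 2^{r+1}$ and, say, only $\ppt(H_r)\ge 3$ you would get $\pd(H_r)\ppt(H_r)\ge 6\cdot 2^r$, which merely matches your upper bound on $\thpda(H_r)$ instead of exceeding it. Moreover, the mechanism you sketch (a hub can only be observed across a hypercube edge after all $2s$ first-path vertices of the adjacent layer are observed) aims at the hubs, which is not where the obstruction lives; the paper's engine is the middle vertices of the paths. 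Its argument is: if $|S|\le 2^{r+1}-1$, then by pigeonhole over the $2^r$ layers some layer $q_0$ contains at most one vertex of $S$, so two hubs of that layer, say $(u_1,q_0)$ and $(u_2,q_0)$, lie outside $S$. If two of the $s$ parallel paths between them had no vertex of $S$ on or adjacent to them, the observation process deadlocks on those paths: each of the two hubs always retains at least two unobserved neighbors (the first vertices of both untouched paths), so no force ever enters either path from either end, and the cross-layer copies of their interior vertices are blocked by the same unobserved vertices, so the middle vertices are never observed. Hence at least $s-1$ of these paths must each have a vertex of $S$ adjacent to them; since distinct paths have disjoint interiors and a vertex in another layer is adjacent to at most one vertex of layer $q_0$, each vertex of $S$ accounts for at most one such path, giving $|S|\ge s-1\ge 2^{r+1}$, a contradiction. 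This yields $\pd(H_r)=2^{r+1}$, and pushing the same argument further shows every minimum power dominating set consists of two hubs in each layer, so propagation runs simultaneously and identically in all layers and $\ppt(H_r)=\ppt(G(3,s,4))=4$; only then does $\pd(H_r)\ppt(H_r)=2^{r+3}>6\cdot 2^r\ge\thpda(H_r)$ follow. Supplying this pigeonhole-plus-deadlock counting is what your proposal still needs; note that the hypothesis $s\ge 2^{r+1}+1$ enters exactly here, as $s-1\ge 2^{r+1}$.
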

 \begin{proof} 
Clearly $H_r$ contains $2^{r}$ copies of $G(3,s,4)$. By Proposition~\ref{p:Gnsm}(1), 
$\gamma(G(3,s,4))\geq 3s\geq 3(2^{r+1}+1)$. Let $S$ be a power dominating set of $H_r$.  

Assume first that $|S|\leq 2^{r+1}-1$. Then 
there is a copy of $G(3,s,4)$ in which at most one of the vertices from the original $K_3$ are chosen. Hence there are at least two vertices in the original $K_3$ of that copy, say $u_1$ and $u_2$, that are not in $S$. For at least $s-1$ of the paths between $u_1$ and $u_2$, there must be a vertex in $S$ that is adjacent to a vertex on the path. No vertex in a different copy of $G(3,s,4)$ is adjacent to two of these paths, so there are at least $s-1=2^{r+1}$ vertices needed, contradicting our assumption that $|S|\leq 2^{r+1}-1$. Thus any power dominating set of $H_r$ must contain at least $2^{r+1}$ vertices. 

Now suppose  $S$ is a  set that has the same two vertices in each copy of $G(3,s,4)$, so $|S|=2\cdot 2^r$.  Since  power domination will occur simultaneously in each copy, $S$ is a power dominating set of $H_r$ and $\ppt(H_r;S)=\ppt(G(3,s,4);S)=\ppt(G(3,s,4))$. Thus $\pd(H_r)= 2^{r+1}$. Since any minimum power dominating set must have this form, $\ppt(H_r)=\ppt(G(3,s,4))$.  By Proposition~\ref{p:Gnsm}(2), $\ppt(G(3,s,4))=4$ and therefore $ \pd(H_r) \ppt(H_r) = 2^{r+1} \cdot 4=2^{r+3}$. 

Finally, suppose $S$ is the set of three original vertices in each copy of $G(3,s,4)$. Then $|S|=3\cdot 2^{r}$. As in the proof of Proposition~\ref{p:Gnsm}(3), the number of rounds needed to complete the power domination  simultaneously in each copy is 2. Thus, \[\thpda(H_r)\leq 3\cdot2^{r} \cdot 2 = 3\cdot 2^{r+1}
<3(2^{r+1}+1) < 4\cdot 2^{r+1}=2^{r+3}.\]

If $G$ is $s$-connected and $H$ is $r$-connected, then $G\Box H$ is $(s+r)$-connected \cite{S57}. Thus $Q_r$ is $r$-connected and $G(3,s,4)\,\Box\, Q_r$  is $(2+r)$-connected. \end{proof}

\subsection{The initial cost definition  $\thpdx(G)$}\label{s:thpdx}

In this section we summarize some basic results about $\thpdx(G)$, and we prove that $\thpdx(G)<|V(G)|$ when $G$ is a connected graph of order at least three. In contrast, we present examples where $\thpdx(G)=\frac{6}
{7}|V(G)|$.  The graphs $G=H\circ K_1$ have high domination number, but we show that $\thpdx(H\circ 
K_1)\leq \frac{3}{4}|V(G)|$ if $H$ is connected and nontrivial. We compare the results found in \cite{product-power-throt} about $\thpda(P_n\Box P_m)$ to new upper bounds on $\thpdx(P_n\Box P_m)$ that show that 
the best ways to power dominate are different in each case. 

For a graph $G$ of order $n$, recall that $\thpdx(G,k)=k(1+\ppt(G,k))$ and \[\thpdx(G)=\min_{\pd(G) \le k\le n}k(1+\ppt(G,k))=\min_{\pd(G) \le k \le n}\thpdx(G,k).\]

The next result follows from Observations \ref{o:bd-x} and  \ref{o:ub-x}. 

 \begin{obs}\label{o:thpd-x} For every  graph $G$  of order $n$:
\ben[$(1)$]
\item $\thpdx(G)\le \pd(G)(1+\ppt(G))$.  
\item $\thpdx(G)\le 2\gamma(G)$.
\item $\pd(G)\le  \thpdx(G)\le n$ and $\pd(G)+1\le  \thpdx(G)$ if $G$ is connected and $n\ge 2$.
\item If $\pd(G)\ge \frac n 2$, then $\thpdx(G)=n$.
\een
\end{obs}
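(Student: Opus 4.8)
The plan is to derive all four statements by specializing the universal observations of Section \ref{ss:univ} to power domination, where the abstract parameter $Y$ is $\pd$, the abstract propagation time $\ptx$ is $\ppt$, and $\thxx$ is $\thpdx$. First I would confirm that power domination meets the structural hypotheses recorded just before Section \ref{ss:univ}: that $1 \le \pd(G) \le n$, that $\ppt(G,n) = 0$ (because $P^{[0]}(V(G)) = V(G)$), that $S' \subseteq S$ forces $\ppt(G;S) \le \ppt(G;S')$, and that $\pd(G) \le n-1$ with $\ppt(G,n-1) = 1$ when $G$ is connected of order $n \ge 2$. These are exactly the conditions the excerpt asserts hold for power domination, so the universal results apply without modification.

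Granting this, parts (1) and (3) are immediate citations. Part (1), $\thpdx(G) \le \pd(G)(1+\ppt(G))$, is Observation \ref{o:ub-x}(1) read with $Y = \pd$. Part (3), the chain $\pd(G) \le \thpdx(G) \le n$ together with the refinement $\pd(G)+1 \le \thpdx(G)$ in the connected case, is exactly Observation \ref{o:bd-x} with $Y = \pd$.

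For part (2) the only content beyond a citation is the identity $k_{\pd}(G,1) = \gamma(G)$. Observation \ref{o:ub-x}(2) gives $\thpdx(G) \le 2\kx(G,1)$, so I would note that $\ppt(G,k) = 1$ holds exactly when some $k$-set $S$ satisfies $P^{[1]}(S) = N[S] = V(G)$, i.e.\ when $S$ is a dominating set; hence the least such $k$ is $\gamma(G)$, as already observed in Section \ref{ss:univ}. Substituting yields $\thpdx(G) \le 2\gamma(G)$.

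Part (4) is the one step I expect to require a short direct argument rather than a one-line citation, so it is the main (though mild) obstacle: it is the lower bound here that is not already packaged as a numbered result. For any $k$ with $\pd(G) \le k < n$, each $k$-set $S$ is a proper subset of $V(G)$, so $\ppt(G;S) \ge 1$ by the observation in Section \ref{s:prod-power-a}; hence $\thpdx(G,k) = k(1+\ppt(G,k)) \ge 2k \ge 2\pd(G) \ge n$ whenever $\pd(G) \ge \frac n 2$. Since also $\thpdx(G,n) = n$, the minimum defining $\thpdx(G)$ over $\pd(G) \le k \le n$ equals $n$. Apart from this, there is essentially no computational difficulty: the substance lives entirely in the universal observations already proved, and the remaining work is the bookkeeping of checking that the hypotheses transfer and of identifying $k_{\pd}(G,1)$ with $\gamma(G)$.
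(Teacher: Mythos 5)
Your proposal is correct and matches the paper's approach: the paper derives this observation by specializing the universal results of Section \ref{ss:univ} (Observations \ref{o:bd-x} and \ref{o:ub-x}, together with the identification $k_{\pd}(G,1)=\gamma(G)$ noted there), exactly as you do. Your direct argument for part (4) is not really extra work either --- it reproduces verbatim the unlabeled remark following Observation \ref{o:ub-x}, which the paper implicitly invokes for that part.
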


 The next result follows from Remark~\ref{r:univ-low} in the case $G$ is connected, and the analysis of the disconnected case is straightforward.

\begin{rem} $\null$ \label{low-thx} 
\ben[$(1)$]
\item  $\thpdx(G)=1$  if and only if $G=K_1$.
\item  $\thpdx(G)=2$ if and only if $\gamma(G)=1$ or $G=2K_1$.  
\item  $\thpdx(G)=3$ if and only if  $\ppt(G,1)=2$ or $G=3K_1$ or $G=K_2\du K_1$.
\een
 A description of a construction for a connected graph $G$ with $\ppt(G,1)=2$, which is equivalent to $\pd(G)=1$ and $\ppt(G)=2$, appears  in \cite{product-power-throt}. In particular, $\thpdx(C_4)=3$. 
\end{rem}

\begin{rem}  \label{thx-families} Power domination on paths behaves like Cops and Robbers, and power domination on $C_n$ behaves like power domination on $P_n$.  The remaining parts of the next result follow from  Observation \ref{o:thpd-x} and Remark \ref{low-thx}.
\ben[$(1)$]
\item $\thpdx(P_n)=1+\rad(P_n)=1+\lc\frac{n-1}2\rc$.
\item $\thpdx(C_n)=1+\rad(P_n)=1+\lc\frac{n-1}2\rc$.
\item  $\thpdx(K_n)=2$.
\item $\thpdx(K_{1,n-1})=2$.
\item For $n\ge 4$, $\thpdx(K_{2,n-2})=3$.
\item For $p,q\ge 3$, $\thpdx(K_{p,q})=4$.
\een
\end{rem}

In Theorem \ref{less-than-n} we show that $\thpdx(G)<|V(G)|$ for a connected graph of order at least three, using the next result.

\begin{thm}\label{dom-book-2}{\rm \cite[Theorem 2.2]{dom-book}} A  connected graph  $G$  of order $n\ge 2$ has $\gamma(G)=\frac n 2$ if and only if $G=H\circ K_1$ for some connected graph $H$ or  $G=C_4$.  \end{thm}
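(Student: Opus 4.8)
The plan is to prove both directions, with the forward (sufficiency) direction being a short computation and the reverse (necessity) direction carrying essentially all of the content. Throughout I would note that, since $\gamma(G)=\frac n2$ must be an integer, the order $n$ is even (the odd case being vacuous).

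For the forward direction I would handle the two constructions directly. If $G=H\circ K_1$ with $|V(H)|=\frac n2$, then the copy of $V(H)$ is a dominating set, so $\gamma(G)\le \frac n2$; for the reverse inequality, partition $V(G)$ into the $\frac n2$ pairs $\{h,\ell_h\}$, where $\ell_h$ is the pendant appended to $h\in V(H)$. Since $N(\ell_h)=\{h\}$, dominating $\ell_h$ forces any dominating set to meet each pair, and as the pairs are disjoint this gives $\gamma(G)\ge \frac n2$. For $G=C_4$ one checks $\gamma(C_4)=2=\frac 42$ by hand.

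For the reverse direction I would first extract the key global structure. By Ore's complement lemma (the complement of a minimal dominating set in a graph with no isolated vertices is again a dominating set), if $D$ is a minimum dominating set then $\overline D=V(G)\setminus D$ is also dominating; since $|\overline D|=\frac n2=\gamma(G)$, both $D$ and $\overline D$ are \emph{minimum} dominating sets, and in particular every vertex of $D$ has a neighbor in $\overline D$ and conversely. A Hall-type argument then shows the bipartite graph $B$ of edges between $D$ and $\overline D$ has a perfect matching $M=\{v_iu_i\}$: if some $S\subseteq D$ had $|N_B(S)|<|S|$, then $(D\setminus S)\cup N_B(S)$ would be a dominating set of size less than $\gamma(G)$, a contradiction. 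This matching is the backbone of the structural analysis.

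I would complete the argument by induction on the (even) order $n$, with base cases $n=2$ (giving $K_2=K_1\circ K_1$) and $n=4$ (where the only connected graphs with $\gamma=2$ are $P_4=K_2\circ K_1$ and $C_4$) checked directly. For the inductive step the dichotomy is whether $G$ has a leaf. If $\delta(G)\ge 2$, I would use the perfect matching $M$ together with the minimality of both $D$ and $\overline D$ to rule out any ``slack'' and force $G=C_4$; this delicate exceptional case is one of the two main obstacles. Otherwise let $\ell$ be a leaf with support $u$ and set $G'=G-\{\ell,u\}$. A standard inequality gives $\gamma(G')\ge\gamma(G)-1=\frac{|V(G')|}{2}$, while Theorem~\ref{half} gives the reverse inequality once one verifies $G'$ has no isolated vertices (the only possible isolated vertices of $G'$ are additional leaves of $u$, and a short argument shows $u$ cannot have two leaves when $\gamma(G)=\frac n2$, as then $u$ alone would dominate both and push $\gamma(G)$ below $\frac n2$). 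Hence $\gamma(G')=\frac{|V(G')|}{2}$, so each component $C$ of $G'$ satisfies $\gamma(C)=\frac{|V(C)|}{2}$ and is, by induction, a corona or $C_4$. The hard part—the second main obstacle—is the reassembly: I must show that no component is a $C_4$ and that re-attaching $u$ (with its private leaf $\ell$) extends rather than breaks the corona structure, i.e.\ that the neighbors of $u$ in $G'$ are exactly the non-leaf (``$H$-side'') vertices of the components, yielding $G=H\circ K_1$ with $H$ connected. Controlling this attachment, together with the $\delta\ge 2$ analysis, is where essentially all the difficulty lies.
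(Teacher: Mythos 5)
This statement is quoted in the paper as \cite[Theorem 2.2]{dom-book} and is not proved there at all; it is the classical characterization of Payan--Xuong and Fink--Jacobson--Kinch--Roberts, so your attempt can only be compared against the standard proofs in the literature, whose inductive leaf-removal strategy your outline indeed follows. Several of your ingredients are correct and correctly deployed: the forward direction via the pairing $\{h,\ell_h\}$, the Ore complement observation that $V(G)\setminus D$ is also a minimum dominating set, the Hall-type argument producing a perfect matching between $D$ and its complement (your exchange set $(D\setminus S)\cup N_B(S)$ does dominate, using that both $D$ and its complement dominate), and the reduction $\gamma(G')\ge \gamma(G)-1$ with the observation that a support vertex of two leaves forces $\gamma(G)<\frac n2$ (though your parenthetical ``$u$ alone would dominate both'' needs expansion: delete $u$ together with \emph{all} its leaves, check the remainder has no isolated vertices, and apply Theorem~\ref{half} to get $\gamma(G)\le 1+\frac{n-1-t}{2}<\frac n2$).

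However, the proposal has two genuine gaps, and they are exactly the two places you yourself flag as ``where essentially all the difficulty lies.'' First, in the case $\delta(G)\ge 2$ you assert that the perfect matching $M$ together with minimality of $D$ and its complement ``rules out slack and forces $G=C_4$,'' but you give no argument, and the matching alone cannot do this: every connected graph of even order with a dominating set whose complement also dominates admits such a matching, including many graphs with $\gamma(G)<\frac n2$, so the matching carries no contradiction by itself. That $C_4$ is the \emph{only} connected graph with $\delta\ge 2$ and $\gamma=\frac n2$ is a substantive theorem; known routes go through a dedicated structural analysis or through the bound $\gamma(G)\le \frac{2n}{5}$ for connected graphs with $\delta\ge 2$ of order at least $8$ (plus a finite check of small orders and exceptional graphs), neither of which is sketched. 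Second, the reassembly step after deleting $\{\ell,u\}$ is left entirely open: you must show no component of $G'$ is a $C_4$ and that $u$ attaches to each component only at its non-leaf vertices (e.g., if $u$ attaches to a pendant vertex of a corona component, or to a $C_4$ component, one must exhibit a dominating set of size below $\frac n2$ in $G$, uniformly in the structure of the other components). Since both of these steps are named but not executed, the proposal is an accurate road map of the standard proof rather than a proof: its two acknowledged placeholders are precisely the content of the theorem.
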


\begin{thm}\label{less-than-n} Let   $G$ be a connected graph of order $n\ge 3$.  Then $\thpdx(G)<n$. Furthermore, if $G=H\circ K_1$ for a connected graph $H$ of order at least two, then $\thpdx(G)=3\gamma(H) $ and $\thpdx(G)= \frac{3n}{4}$.
\end{thm}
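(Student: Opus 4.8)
The plan is to prove the corona identity first, since the hard case of $\thpdx(G)<n$ reduces to it. Write $h=|V(H)|$, so $G=H\circ K_1$ has order $n=2h$. The two facts I would lean on are both supplied by Theorem~\ref{half-ratio}: that $\thpda(H\circ K_1)=2\gamma(H)$, and that every power dominating set of $G$ contained in $V(H)$ is a dominating set of $H$. The first step is to pin down $\pd(G)=\gamma(H)$. The bound $\pd(G)\le\gamma(H)$ is immediate: a minimum dominating set $S$ of $H$, read inside $G$, has $P^{[1]}(S)=N[S]\supseteq V(H)$, after which each $v\in V(H)\setminus S$ is blue with its pendant leaf as its unique white neighbor, so a single forcing step finishes and $\ppt(G;S)=2$. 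For the reverse inequality I would run a leaf-replacement argument: given any power dominating set $S$, replace each pendant leaf $v'\in S$ by its support $v$ (deleting $v'$ if $v\in S$); since $N[v']\subseteq N[v]$, monotonicity of the power domination process keeps the resulting $S'\subseteq V(H)$ power dominating with $|S'|\le|S|$, and the ``furthermore'' clause of Theorem~\ref{half-ratio} then forces $S'$ to dominate $H$, so $|S|\ge|S'|\ge\gamma(H)$.

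With $\pd(G)=\gamma(H)$ in hand, the corona equality follows by squeezing. The construction above already gives $\thpdx(G)\le\gamma(H)(1+2)=3\gamma(H)$. For the matching lower bound, every admissible index satisfies $\gamma(H)=\pd(G)\le k\le n$. For $k<n$ I would write $\thpdx(G,k)=k+k\,\ppt(G,k)=k+\thpda(G,k)\ge k+\thpda(G)=k+2\gamma(H)\ge 3\gamma(H)$, using $k\ge\gamma(H)$ and $\thpda(G)=2\gamma(H)$; for $k=n$ I would note $\thpdx(G,n)=n=2h\ge 3\gamma(H)$, because $\gamma(H)\le\lfloor h/2\rfloor$ by Theorem~\ref{half} ($H$ is connected of order $\ge 2$, hence has no isolated vertex). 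Thus $\thpdx(G)=3\gamma(H)$, and the same estimate $\gamma(H)\le h/2=n/4$ yields $\thpdx(G)\le\tfrac{3n}{4}$.

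For the first assertion I would start from $\thpdx(G)\le 2\gamma(G)$ (Observation~\ref{o:thpd-x}) together with $\gamma(G)\le\lfloor n/2\rfloor$ (Theorem~\ref{half}). If $\gamma(G)<\tfrac n2$ — in particular whenever $n$ is odd — this already gives $\thpdx(G)\le 2\gamma(G)<n$. The only remaining case is $\gamma(G)=\tfrac n2$, forcing $n$ even and $n\ge 4$, and here Theorem~\ref{dom-book-2} leaves exactly $G=C_4$ or $G=H\circ K_1$ with $H$ connected of order $h=n/2\ge 2$. For $C_4$ we have $\thpdx(C_4)=3<4$ (Remark~\ref{low-thx}); for the corona, the part just proved gives $\thpdx(G)=3\gamma(H)\le\tfrac{3n}{4}<n$.

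The main obstacle is the lower bound in the corona case, and precisely the identity $\pd(G)=\gamma(H)$: it is what confines $k$ to the range $k\ge\gamma(H)$ and makes the chain $\thpdx(G,k)\ge k+2\gamma(H)\ge 3\gamma(H)$ valid, since without it small values of $k$ could a priori undercut $3\gamma(H)$. One further point I would flag is that the displayed identity $\thpdx(G)=\tfrac{3n}{4}$ holds with equality only when $\gamma(H)=\tfrac h2$ (equivalently, by Theorem~\ref{dom-book-2} applied to $H$, when $H$ is itself a corona or $C_4$); for a general connected $H$ the correct reading is the upper bound $\thpdx(G)=3\gamma(H)\le\tfrac{3n}{4}$, consistent with the surrounding statement $\thpdx(H\circ K_1)\le\tfrac34|V(G)|$.
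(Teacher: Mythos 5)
Your proof is correct, but the mechanism behind the corona lower bound is genuinely different from the paper's, and the logical order is inverted. The paper proves $\thpdx(G)<n$ first (via the same $\gamma(G)<\frac n2$ versus $\gamma(G)=\frac n2$ dichotomy you use at the end) and then needs that inequality inside the corona argument: for a set $S$ attaining $\thpdx(G)$, the facts $\thpdx(G)<n$ and $\gamma(G)=\frac n2$ show $S$ is not a dominating set of $G$; after the leaf-for-support swap one may assume $S\subseteq V(H)$, the ``furthermore'' clause of Theorem~\ref{half-ratio} then makes $S$ a dominating set of $H$, and $\ppt(G;S)=2$ gives $\thpdx(G;S)=3|S|\ge 3\gamma(H)$. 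You instead make the corona case self-contained by (i) proving $\pd(G)=\gamma(H)$ (the same leaf-swap plus furthermore-clause argument), and (ii) invoking the value $\thpda(G)=2\gamma(H)$ --- the main assertion of Theorem~\ref{half-ratio}, which the paper's proof never uses --- through the identity $\thpdx(G,k)=k+\thpda(G,k)$, so that $\thpdx(G,k)\ge k+2\gamma(H)\ge 3\gamma(H)$ uniformly over admissible $k<n$, with $k=n$ disposed of by $n=2h\ge 3\gamma(H)$. Your route buys logical independence: the corona equality no longer presupposes $\thpdx(G)<n$, so reversing the order creates no circularity (and indeed none arises in your write-up), while the paper's route is a more elementary direct analysis of a single optimal set that never needs $\pd(G)=\gamma(H)$ or the value of $\thpda(G)$. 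Finally, your closing flag is right and matches the paper: its proof only ever establishes $\thpdx(G)=3\gamma(H)\le\frac{3n}{4}$, so the displayed ``$\thpdx(G)=\frac{3n}{4}$'' in the statement must be read as an inequality, as confirmed by Corollary~\ref{c:Pm-circ-K1}, where $\thpdx(P_m\circ K_1)=3\lc\frac m3\rc$ is in general strictly less than $\frac{3n}{4}$.
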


\begin{proof} 
By Theorem~\ref{half}, $\gamma(G) \leq \frac{n}{2}$. If $\gamma(G)< \frac{n}{2}$, then by Observation \ref{o:thpd-x}(2), $\thpdx(G)\leq 2\gamma(G)<n$.
So assume $\gamma(G)=\frac{n}{2}$. By Theorem~\ref{dom-book-2}, either $G=C_4$ or $G$ consists of a connected graph $H$ with a leaf attached to each vertex.
If $G=C_4$, then $\thpdx(C_4)=3<n$ by Remark~\ref{low-thx}(3).

So suppose $G=H\circ K_1$ for a connected graph $H$. 
Since $n\ge 3$, $H$ has at least 2 vertices, so by Theorem~\ref{half}, $\gamma(H)\leq \frac{|V(H)|}{2}$, and this proves  that $3\gamma(H)\leq \frac{3n}{4}$. Let $S$ be a dominating set of $H$ with $|S|=\gamma(H)$. Then $S$ is a power dominating set of $G$ with propagation time two.  Thus  $\thpdx(G)\leq 3\gamma(H) \leq \frac{3n}{4}<n$.

To show that $\thpdx(G)=3\gamma(H)$ for  $G=H\circ K_1$, we choose $S\subset V(G)$ such that $\thpdx(G;S)=\thpdx(G)$ and show that $\thpdx(G;S)\ge 3\gamma(H)$. Note that $S$ is not a dominating set of $G$  since $\gamma(G)=\frac n 2$ and $\thpdx(G)<n$.  
Without loss of generality, we may assume that $S\subseteq V(H)$ since we can always replace a leaf of $G$ by its neighbor in $H$. The set $S$ must be a dominating set of $H$ in order to be a power dominating set of $G$, so $|S|\geq \gamma(H)$ and the propagation time for $S$ is two. Therefore $ \thpdx(G;S)=3|S|\ge  3\gamma(H)$. 
\end{proof}

The next result is immediate from Theorem \ref{less-than-n}.
\begin{cor}\label{c:Pm-circ-K1} For $m\ge 2$, $\thpdx(P_m\circ K_1)= 3\lc \frac{m}{3}\rc$.
\end{cor}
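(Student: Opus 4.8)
The plan is to obtain this as a direct specialization of Theorem~\ref{less-than-n}. First I would observe that for $m \ge 2$ the path $P_m$ is a connected graph of order at least two, so it is a legitimate choice for the base graph $H$ in the corona construction $H \circ K_1$. Theorem~\ref{less-than-n} then applies verbatim and gives $\thpdx(P_m \circ K_1) = 3\gamma(P_m)$.

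The remaining step is purely computational: substitute the domination number of a path. The value $\gamma(P_m) = \lc \frac{m}{3}\rc$ is standard and is already recorded in Observation~\ref{p:pathcycle}, so I would simply cite it to conclude
\[
\thpdx(P_m \circ K_1) = 3\gamma(P_m) = 3\lc \frac{m}{3}\rc.
\]

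I expect no genuine obstacle here, since the corollary is an immediate instance of the theorem. The only detail worth a second glance is the boundary case $m = 2$: there $P_2 = K_2$ still has order two, so Theorem~\ref{less-than-n} applies, and the answer $3\lc \frac{2}{3}\rc = 3$ is consistent with $P_2 \circ K_1 = P_4$ together with the path formula $\thpdx(P_n) = 1 + \lc \frac{n-1}{2}\rc$ from Remark~\ref{thx-families}.
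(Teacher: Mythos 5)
Your proof is correct and takes essentially the same route as the paper: the paper presents this corollary as immediate from Theorem~\ref{less-than-n}, which is exactly your specialization $H = P_m$ combined with $\gamma(P_m) = \lc \frac{m}{3}\rc$. Your sanity check at $m=2$ (where $P_2 \circ K_1 = P_4$ gives $\thpdx(P_4) = 1 + \lc \frac{3}{2}\rc = 3$) is a nice extra confirmation, though not needed for the argument.
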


Next we present a family of connected graphs that satisfy $\thpdx(G)=\frac{6|V(G)|}{7}$. 

\begin{ex} \label{ex-leslie} Let $G_1$ consist of a $P_4$ and a $P_3$ with an edge connecting them between a vertex of degree 2 on each, as shown in Figure~\ref{leslies}. The vertices of degree 3 are colored green in the figure. Then $\gamma(G_1)=3$, hence $\thpdx(G_1)\leq 3\cdot 2 =6$. However, any power dominating set of $G_1$ that is not a dominating set must contain a vertex from each path and have propagation time at least 2. Thus $\thpdx(G_1)\geq 2\cdot 3=6$. Since $|V(G_1)|=7$, $\thpdx(G_1)=\frac{6|V(G_1)|}{7}$. 

Now let $G$ consist of  $r$ disjoint copies of $G_1$, say $G_1, G_2, \ldots, G_r$, with any subset of edges between the green vertices in each copy of $G_1$. Any dominating set of $G$ must contain at least 3 vertices from each $G_i$, $1\leq i\leq r$, in order to dominate the leaves. Any power dominating set of $G$ must contain at least two vertices from each $G_i$, $1\leq i\leq r$. Hence $\thpdx(G)=\frac{6|V(G)|}{7}$. \end{ex} 

\begin{figure}[h] 
\begin{center}
\begin{tikzpicture}
\filldraw [fill=white, thick]
(0,1) circle [radius=3pt]
(2,1) circle [radius=3pt]
(3,1) circle [radius=3pt]
(0,0) circle [radius=3pt]
(2,0) circle [radius=3pt];
\filldraw [fill=green, thick]
(1,1) circle [radius=3pt]
(1,0) circle [radius=3pt];

\draw[thick] (0,1)--(3,1);
\draw[thick] (1,0)--(1,1);
\draw[thick] (0,0)--(2,0);
\node at (1.5,-.8) {$G_1$};

\filldraw [fill=white, thick]
(4,1) circle [radius=3pt]
(6,1) circle [radius=3pt]
(7,1) circle [radius=3pt]
(4,0) circle [radius=3pt]
(6,0) circle [radius=3pt];
\filldraw [fill=green, thick]
(5,1) circle [radius=3pt]
(5,0) circle [radius=3pt];

\draw[thick] (4,1)--(7,1);
\draw[thick] (5,0)--(5,1);
\draw[thick] (4,0)--(6,0);
\node at (5.5,-.8) {$G_2$};

\draw[green] (1,1) to [bend left] (5,1);

\node at (8,.5){$\ldots$};
\node at (9,.5){$\ldots$};

\filldraw [fill=white, thick]
(10,1) circle [radius=3pt]
(12,1) circle [radius=3pt]
(13,1) circle [radius=3pt]
(10,0) circle [radius=3pt]
(12,0) circle [radius=3pt];
\filldraw [fill=green, thick]
(11,1) circle [radius=3pt]
(11,0) circle [radius=3pt];

\draw[thick] (10,1)--(13,1);
\draw[thick] (11,0)--(11,1);
\draw[thick] (10,0)--(12,0);
\node at (11.5,-.8) {$G_r$};
\draw[green] (5,1) to [bend left] (11,1);
\draw[green] (1,0) to [bend right] (5,0);

\end{tikzpicture}\vspace{-5pt}
\end{center}

\caption{A graph $G$ with $|V(G)|=7r$ and $\thpdx(G)=\frac{6|V(G)|}{7}$. Any subset of edges between the green vertices may be included.}

\label{leslies} 
\end{figure}
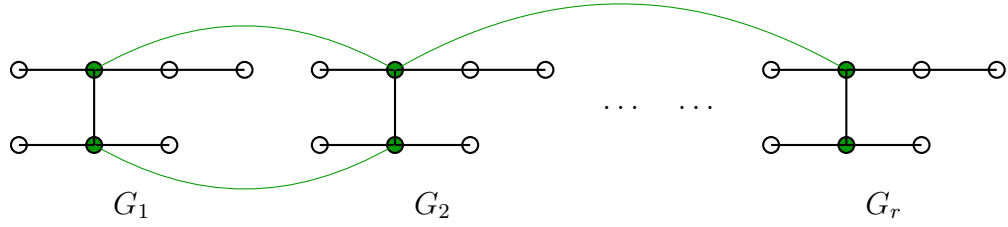

Let $G$ be  $j$ copies of $P_3$ and $i$ copies of $P_4$ (all disjoint, so the order of $G$ is $ 3j+4i$). We show $\frac{\thpdx(G)}{3j+4i}\geq \frac{6}{7}$ if and only if $j=i$.  Note that $\gamma(G) = j+2i$,  $\pd(G)=j+i$, and $\ppt(G)=2$. Then $\thpdx(G) = \min\{2(j+2i), 3(j+i)\}$ because $\ppt(G,k)=2$ for $\pd(G)\le k<\gamma(G)$. Thus $i=j$ implies $\thpdx(G)=\frac 6 7$.  By choosing a dominating set,
$\frac{\thpdx(G)}{3j+4i} \leq \frac{2\gamma(G)}{3j+4i}=\frac{2(j+2i)}{3j+4k}=\frac{2j+4i}{3j+4i}$. If $i<j$, then
 $14j+28i< 18j+24i$, which implies $\frac{2j+4i}{3j+4i}<\frac{6}{7} $. By choosing  a minimum  power 
dominating set, $\frac{\thpdx(G)}{3j+4i} \leq \frac{3(j+i)}{3j+4i}=\frac{3j+3i}{3j
+4i}.$ If $j<i$, then $21j+21i<18j+24i$, which implies $ \frac{3j+3i}{3j
+4i}<\frac 6 7.$ Hence $i\ne j$ implies $\frac{\thpdx(G)}{3j+4i}< \frac{6}{7}$.

 If $G$ is connected and not $K_1$ or $K_2$, then Theorem~\ref{less-than-n} shows that the ratio $\frac{\thpdx(G)}{|V(G)|}$ is less than 1, and Example~\ref{ex-leslie} shows that it can be as large as  $\frac{6}{7}$.
\begin{quest} Is $\frac 6 7$ the largest possible value of $\frac{\thpdx(G)}{|V(G)|}$ that is achieved for connected graphs of arbitrarily large order?
\end{quest}

Next we examine grid graphs, which are natural to consider in PMU placement problems.  It is interesting to compare the value of  $\thpdx(P_m\circ K_1)$ in Corollary \ref{c:Pm-circ-K1} with the value of  $\thpdx(P_m\Box P_2)$ in Theorem \ref{th-p2-pm}.

\begin{prop} \label{grid-min} For $n,m\ge 2$, $\thpdx(P_n\Box P_m)\leq \min\{\lc \frac{m}{3}\rc(n+1), \lc \frac{n}{3}\rc(m+1)\}$.
\end{prop}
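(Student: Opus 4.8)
The plan is to prove both inequalities by exhibiting an explicit power dominating set and bounding $\thpdx(G;S)=|S|(1+\ppt(G;S))$ from above; since the two target quantities $\lc \frac m3\rc(n+1)$ and $\lc \frac n3\rc(m+1)$ are interchanged by the isomorphism $P_n\Box P_m\cong P_m\Box P_n$, it suffices to establish one of them, say $\thpdx(P_n\Box P_m)\le \lc \frac m3\rc(n+1)$, and then apply the symmetry to obtain the other and take the minimum. I would write $V(P_n\Box P_m)=\{(i,j): 1\le i\le n,\ 1\le j\le m\}$ and regard the $n$ sets $C_i=\{(i,j):1\le j\le m\}$ as \emph{columns}, each inducing a copy of $P_m$.

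First I would choose $D\subseteq\{1,\dots,m\}$ to be a minimum dominating set of the path $P_m$, so that $|D|=\gamma(P_m)=\lc \frac m3\rc$, and set $S=\{(1,j):j\in D\}$ inside the first column $C_1$. The domination step then colours all of $C_1$ blue, because $D$ dominates $C_1\cong P_m$, together with the column-$2$ vertices $(2,j)$ for $j\in D$; this gives $C_1\subseteq P^{[1]}(S)$.

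Next I would track the propagation column by column. The key local fact is that once an entire column $C_i$ is blue, every vertex $(i,j)$ has all of its neighbours blue except for $(i+1,j)$, so $(i,j)\to(i+1,j)$ and all of $C_{i+1}$ turns blue in a single round. Applying this, $C_1$ is blue after round $1$, and then $C_i$ is blue after round $i$ for each $i=2,\dots,n$; in particular every vertex is blue after round $n$, so $S$ is a power dominating set with $\ppt(P_n\Box P_m;S)\le n$. Since $|S|=\lc \frac m3\rc<nm$, this $S$ is admissible and $\thpdx(P_n\Box P_m)\le |S|(1+\ppt(P_n\Box P_m;S))\le \lc \frac m3\rc(n+1)$, and the symmetric argument (PMUs placed in a single row at a dominating set of the corresponding $P_n$) yields $\lc \frac n3\rc(m+1)$.

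The step requiring the most care is the base of this column-by-column induction, namely the transition from the domination step to the first zero forcing round: I would verify that after the domination step $C_1$ is entirely blue and that each $(1,j)$ with $j\notin D$ has $(2,j)$ as its unique white neighbour, its column-$1$ neighbours $(1,j\pm1)$ being blue, so that $C_2$ is completed in round $2$ (boundary vertices $(1,1),(1,m)$ are handled identically since they have only one column-$1$ neighbour). Beyond this the induction is routine, and because I only need the inequality $\ppt\le n$ rather than its exact value, any additional forces that may occur, such as partial early filling of later columns, can simply be ignored.
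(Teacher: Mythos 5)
Your proof is correct and is essentially the paper's own argument: place a minimum dominating set of $P_m$ (of size $\lc \frac{m}{3}\rc$) in one boundary copy of $P_m$, note the domination step observes that entire row/column, and then propagation proceeds one copy of $P_m$ per round, giving $\ppt \le n$ and hence the bound $\lc \frac{m}{3}\rc(n+1)$, with symmetry handling the other term. Your write-up just spells out the row-by-row induction in more detail than the paper does.
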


\begin{proof} Arrange $P_n\Box P_m$ with $n$ rows and $m$ columns. 
By symmetry, we need only show that $\thpdx(P_n\Box P_m)\leq \lc \frac{m}{3}\rc(n+1)$. Let $S$ be a minimum dominating set of the top row of $P_n\Box P_m$, so $|S|=\gamma(P_m)$. After the first round, each vertex in the top row is observed and has at most one unobserved neighbor in the second row.  Thus zero forcing can proceed row by row, so $\ppt(P_n\Box P_m;S)\le n$ and $\thpdx(P_n\Box P_m)\leq \lc \frac{m}{3}\rc(1+n)$. 
\end{proof}

In \cite{product-power-throt}, it was shown that  $\thpda(P_n\Box P_m)= \gamma(P_n\Box P_m)$ for all $m,n$ (see Theorem \ref{t:grid}). As for all graphs, $\thpdx(P_n\Box P_m) \leq 2\gamma(P_n\Box P_m)$. For example, $\gamma(P_2\Box P_m) = \lf \frac{m+2}{2}\rf $ and $\gamma(P_3\Box P_m)=\lf\frac{3m+4}{4}\rf$ (see \cite{ACIOP11grid}), and therefore 
$\thpdx(P_2\Box P_m) \leq 2\gamma(P_2\Box P_m) = 2\lf \frac{m+2}{2}\rf\le m+2$ and
$\thpdx(P_3\Box P_m) \leq 2\gamma(P_3\Box P_m) = 2\lf\frac{3m+4}{4}\rf $. However, these bounds are not tight, as shown in the next remark.

\begin{rem} In \cite{GoPiRaTh2011}, it is shown that $\gamma(P_n\Box P_m) = \lf \frac{(m+2)(n+2)}{5}-4\rf$ for $m,n\geq 16$. Since $\lc\frac{m}{3}\rc(n+1)<2\left(\lf\frac{(m+2)(n+2)}{5}\rf-4\right)$, $\thpdx(P_n\Box P_m)< 2\gamma(P_n\Box P_m)$  for $m,n\geq 16$. 
\end{rem} 

  To establish the exact value of $\thpdx(P_2\Box P_m)$ in Theorem \ref{th-p2-pm}, we need some definitions. We use  the notation $u\to v$ or {\em $u$ forces $v$}  to mean $u$ observes $v$ (this may involve a choice among several vertices that can observe $v$). For  a power dominating set  $S\subseteq V(G)$, 
create a \emph{propagating power domination set of forces} $\F$ of $S$ as follows: Initially, $\F=\emptyset$.
 For each $w\in N[S]\setminus S$, choose $x\in S\cap N(w)$ and add $x\to w$ to $\F$.  Then choose a propagating set of forces for $N[S]$ (using the standard color change rule for zero forcing) and add that to $\F$.
 Suppose  $S$ is a power dominating set of $G$ and $\F$ is a propagating power domination set of forces of $S$. For a vertex $x\in S$, define $S_x$ to be the set of all vertices  $w$ such that  there is a sequence of forces  $x=v_0\to v_1\to\dots\to v_k=w$ in $\F$; the empty sequence of forces is permitted, i.e., $x\in S_x$.

\begin{thm} \label{th-p2-pm} For $m\ge 2$, $\thpdx(P_2\Box P_m)= m$ if $m\equiv 0\! \mod 3$  and $\thpdx(P_2\Box P_m) = m+1$ if $m\not\equiv 0\! \mod 3$. \end{thm}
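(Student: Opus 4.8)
The plan is to pair the upper bound that is already available from Proposition~\ref{grid-min} with a matching lower bound obtained by splitting the observed set along the forcing chains. Specializing Proposition~\ref{grid-min} to $n=2$ gives $\thpdx(P_2\Box P_m)\le \min\{3\lc \frac m3\rc,\, m+1\}$; checking the three residues of $m$ modulo $3$ shows this minimum equals $m$ when $3\mid m$ and $m+1$ when $3\nmid m$. The first term records the strategy of placing a minimum dominating set of the top row and finishing the bottom row in one extra round, and the second records a single corner PMU sweeping across the ladder. Thus it remains only to prove $\thpdx(P_2\Box P_m)\ge m$ in general, and $\thpdx(P_2\Box P_m)\ge m+1$ when $3\nmid m$.

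For the lower bound I would fix $G=P_2\Box P_m$, a power dominating set $S$ with $|S|=k$, and a propagating power domination set of forces $\F$, and use the partition $V(G)=\bigcup_{x\in S}S_x$ (disjoint, since each vertex traces back along $\F$ to a unique element of $S$). Writing $p=\ppt(G;S)$ and $p_x=\max_{v\in S_x}\rd(v)\le p$, the heart of the argument is the per-PMU bound
\[|S_x|\le 2p_x+2.\]
Granting this, summing over $x\in S$ gives $2m=\sum_{x\in S}|S_x|\le k(2p+2)$, that is, $\thpdx(G;S)=k(1+p)\ge m$; as $S$ is arbitrary this yields $\thpdx(P_2\Box P_m)\ge m$.

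Proving the displayed bound is where the width-$2$ structure of the ladder must be exploited, and I expect it to be the main obstacle. I would argue by induction on the round $r$ that $x$'s owned region gains at most two vertices in each round $r\ge 2$, the domination round $r=1$ contributing at most the four vertices of $N[x]$; this gives $|S_x|\le 4+2(p_x-1)=2p_x+2$. The crux is that the standard color change rule forces a vertex only when it has a unique white neighbor, so a single PMU cannot race along one row: to extend its owned region into a new column it must already own the previous column in \emph{both} rows, so owned progress advances one full column (two vertices) per round along a single front. Two opposing owned fronts can be simultaneously active only when both flanks have been observed from outside $S_x$, and in the ladder such an external block immediately halts further owned progress in that direction, so the owned region never gains more than two vertices in a round. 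As calibration, a single corner PMU gives $|S_x|=2m=2p_x$, whereas a $2\times 3$ block dominated by its middle top vertex gives $|S_x|=6=2p_x+2$ with $p_x=2$, so the bound is sharp and the argument must be tight enough to detect this case.

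Finally, for $3\nmid m$ I would run the equality analysis. If $\thpdx(G)=m$ were attained by some $S$ with $|S|=k$ and $p=\ppt(G;S)$, then every inequality above is forced to be tight: $p_x=p$ and $|S_x|=2p+2$ for all $x$, with $k(p+1)=m$. Tightness in $|S_x|=2p+2$ requires $x$ to be an interior (degree-$3$) vertex that dominates three vertices in round $1$ and gains exactly two owned vertices in every subsequent round; tracing this through the ladder shows the only possibility is $p=2$ with $S_x$ a $2\times 3$ block dominated by $x$ at its middle top vertex. The case $p=1$ is impossible because a single PMU cannot dominate a $2\times 2$ block ($\gamma(C_4)=2$), and $p\ge 3$ is impossible because, once a PMU's two flanks are externally blocked to allow two-sided full-column progress, no owned advance survives into a third round. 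Hence equality would tile the ladder by $2\times 3$ blocks, forcing $3\mid m$, a contradiction; therefore $\thpdx(P_2\Box P_m)\ge m+1$ when $3\nmid m$, which matches the upper bound and completes the proof.
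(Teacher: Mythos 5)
Your overall architecture coincides with the paper's: the upper bound from Proposition~\ref{grid-min} with a residue check, the lower bound via the partition $V(G)=\bigcup_{x\in S}S_x$ induced by a propagating power domination set of forces, a per-PMU bound of the form $|S_x|\le 2(1+p)$, the summation $2m=\sum_{x\in S}|S_x|$, and a tightness analysis forcing a tiling by $2\times 3$ blocks and hence $3\mid m$. The gap is that your key lemma, $|S_x|\le 2p_x+2$ for an \emph{arbitrary} propagating set of forces $\F$, is false, and the heuristic you give for it fails exactly where the real work lies. Concretely, take $m=10$, rows $1,2$ and columns $1,\dots,10$, $S=\{x,y\}$ with $x=(1,5)$, $y=(2,4)$, and in the domination round assign both $(2,5)$ and $(1,4)$ to $x$ (both assignments are legitimate, since both vertices are neighbors of $x$). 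Then $S_x$ gains $(1,4),(1,6),(2,5)$ in round 1 and $(1,3),(2,6)$ in round 2, and from round 3 onward it gains \emph{three} vertices per round: the chain $(1,3)\to(1,2)\to(1,1)$ races west along the top row because the bottom row beneath it is observed by $y$'s chain, while $(1,6)\to(1,7)\to\cdots$ and $(2,6)\to(2,7)\to\cdots$ advance east as a full column. One computes $|S_x|=16$ and $p=p_x=6$, so $|S_x|=16>14=2p_x+2$. (This does not contradict the theorem, since $|S_y|$ is small, but it kills the per-PMU lemma.) This example shows that your two structural claims are wrong: extending the owned region into a new column does \emph{not} require owning the previous column in both rows --- it only requires that column to be \emph{observed}, possibly by another PMU's region --- and such external observation does not ``halt further owned progress''; it is precisely what \emph{enables} a single owned chain to race along one row.

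Because of this, the lemma cannot be proved for an arbitrary $\F$: the paper must, and does, fix a canonical choice of forces (when there is a choice of forcer, prefer one in the same row), and even then the bound is not a one-line induction. The paper's proof is a case analysis on the first round $r\ge 2$ in which one of $x$'s dominated neighbors $x_W,x_S,x_E$ performs a force: it shows that a row-force $x_W\to x_{WW}$ or $x_E\to x_{EE}$ is impossible in round $r$, and that in every remaining case at least one of the three chains dies permanently (if $x_S\to x_{SW}$ then $x_E$ never forces; if $x_W\to x_{SW}$ then $x_{SW}$ never forces, and similarly on the other side), which yields the per-round gain of at most $2$ thereafter together with the equality characterization $p=2$, $|P^{(1)}(S)\cap S_x|=3$, $|P^{(2)}(S)\cap S_x|=2$ needed for the $3\nmid m$ case. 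Your tightness analysis reaches the right conclusion but rests on the same asserted-not-proved (and, as stated, false) lemma, so the proposal as written is incomplete: right strategy, but the central lemma needs both the row-preference convention and the case analysis, neither of which your sketch supplies.
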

\begin{proof} Let $G=P_2\Box P_m$.
By Proposition~\ref{grid-min}, $\thpdx(G) \leq \lc \frac{2}{3}\rc(m+1)=m+1$.  If $m$ is divisible by 3, then $\thpdx(G) \leq3\lc\frac{m}{3}\rc= 3\cdot \frac{m}{3}=m$. This proves the upper bound. Next we prove the lower bound.

For any $k$ such that $\ppt(G,k)=1$,  $k\ge \gamma(G)= \lf \frac{m+2}{2}\rf $ by \cite{ACIOP11grid}, so $\thpdx (G,k)\ge 2 \lf \frac{m+2}{2}\rf\ge m+1$. Thus we need consider only sets $S$ such that  $\ppt(G;S)\geq 2$.

Arrange $G$ with 2 rows and $m$ columns. 
Choose $S$ such that $\thpdx(G)=\thpdx(G;S)=|S|(1+p)$ where $p=\ppt(G;S)$. Create a propagating power domination set of forces  for $S$ by choosing the forcing vertex in the same row whenever there is a choice (row-forcing is preferred).   For $x\in S$, we show that $|S_x|\le 2(1+p)$ and $|S_x|< 2(1+p)$ except under the additional conditions that $|P^{(1)}(S)\cap S_x|= 3$, $|P^{(2)}(S)\cap S_x|= 2$, and $p=2$.

By the rules of power domination, $|P^{(i)}(S)\cap S_x|\le |P^{(1)}(S)\cap S_x|$ for $i\ge 2$.
Thus $|P^{(1)}(S)\cap S_x|\le 2$ implies  $|S_x| \le 1+2p<2(1+p)$. So assume $|P^{(1)}(S)\cap S_x|= 3$ and $x$ is in the top row.   Denote the east, south, and west neighbors of $x$ by $x_E, x_S$ and $x_W$, and name additional vertices similarly, according to their direction from $x$. Since row-forcing is preferred, $x_{SE}, x_{SW}\not\in S$.

Let $r\ge 2$ be the first round in which any of $x_W, x_S, x_E$ performs a force. We analyze the situation based on which force(s) occur in round $r$.
To obtain a contradiction, suppose that $x_W\to x_{WW}$  in round $r$, which requires $x_{SW}\in P^{[r-1]}(S)$  and $x_{SW}$ is not forced by $x_S$. If  $x_{SW}\in P^{[1]}(S)$, then   $x_{WSW}\in S$ and so $x_{WW}\in P^{(1)}(S)$, which is a contradiction.
Otherwise, $x_{SW}\in P^{(i)}(S)$ with $1<i<r$ and $x_{WSW}\to x_{SW}$ in round $i$ requires that $x_{WW}$ must already be observed before round $i$, which is a contradiction. Therefore, $x_W\to x_{WW}$ cannot happen in round $r$. Similarly, $x_E\to x_{EE}$ cannot happen in round $r$.

Now suppose that $x_S\to x_{SW}$  in round $r$.  This requires $x_{SE}\in P^{[r-1]}(S)$, which in turn requires $x_{ESE},x_{EE}\in P^{[r-1]}(S)$. So  $x_E$ can never force.
Then  $|P^{(r)}(S)\cap S_x|=1$ since $x_{W}$ cannot force in round $r$,  $|P^{(i)}(S)\cap S_x|\le 2$ for $i\ge r+1$, and  $|P^{(i)}(S)\cap S_x|=0$ for $1<i<r$.  Thus \vspace{-8pt}
\[|S_x|\le \sum_{i=0}^p |P^{(i)}(S)\cap S_x|\le 1+3+0+\dots+0+1+2(p-r)<2(1+p).\vspace{-4pt}\]
The case $x_S\to x_{SE}$  in round $r$ is similar. 

It remains to consider the case when $x_S$ does not force in round $r$, and in this case, $|P^{(r)}(S)\cap S_x|\le 2$.
By definition of $r$, one of $x_W, x_E$ must force in round $r$, and we have shown that it cannot force along a row.  Without loss of generality, let $x_W\to x_{SW}$ in round $r$.  Necessarily, $x_{WW}\in P^{[r-1]}(S)$ or $x_{W}$ and $x_{SW}$ are the leftmost vertices in $G$. 
We show that  $x_{WSW}\in P^{[r]}(S)$ if $x_{WW}\in P^{[r-1]}(S)$, and so in either case $x_{SW}$ never performs a force. 
If $x_{WW}\in S$, then $x_{WSW}\in P^{[1]}(S)\subset P^{[r]}(S)$. 
If $x_{WWW}\to x_{WW}$  in round $r-1$, then $x_{WW}\to x_{WSW}$  in round $r$.   
Note that $x_{WSW}$ cannot force $x_{WW}$ in round $r-1$ because its neighbor $x_{SW}$ is also unobserved until round $r$.
If $x_E\to x_{SE}$ in round $r$, a similar proof shows that $x_{SE}$ can never force. Thus $|P^{(i)}(S)\cap S_x|\le 2$ for $i\ge r+1$, and  \vspace{-8pt}
\[|S_x|\le \sum_{i=0}^p |P^{(i)}(S)\cap S_x|\le 1+3+0+\dots+0+2+2(p-r)\le 2(1+p).\vspace{-4pt}\]
If $r\geq 3$, then $|P^{(2)}(S)\cap S_x|=0$, and $|S_x|<2(1+p)$.
If $r=2$ and {$x_W\not\to x_{SW}$ or $x_E\not\to x_{SE}$}, then {$x_{W}$ or $x_{E}$} cannot force in round 2, and $|S_x|\le 1+3+1+2(p-2)<2(1+p)$.
If $r=2$ and {$x_W\to x_{SW}$ and $x_E\to x_{SE}$}  in round $2$, then as described above, {none of $x_{SW}$, $x_{SE}$, or $x_{S}$} can ever force,   so $|P^{(i)}(S)\cap S_x|=0$ for $i\ne 0,1,2$ and $ |S_x|< 2(1+p)$ if $p>2$. Thus we have shown that  $|S_x|\leq 2(1+p)$ in all cases, and $|S_x|< 2(1+p)$
unless  $|P^{(1)}(S)\cap S_x|= 3$, $|P^{(2)}(S)\cap S_x|= 2$, and $p=2$.

Since $|S_x|\le 2(1+p)$ in all cases, $2m=\sum_{x\in S}|S_x|\le |S|2(1+p)=2\thpdx(G)$ and  
 $\thpdx(G)\ge m$ for all $G$.  Furthermore $\thpdx(G)>m$  unless $|P^{(1)}(S)\cap S_x|= 3$, $|P^{(2)}(S)\cap S_x|= 2$, and $p=2$ for all $x\in S$. In this case $|S_x|=6= 2(1+p)$ for all $x\in S$ and hence $2m=\sum_{x\in S}|S_x|= 6|S|$ and $m=3|S|$, and $m$ is divisible by 3. \end{proof}


\section{Product throttling for PSD zero forcing}\label{s:prod-Z-PSD}

The next result implies that product throttling for $\Zp$  is  nontrivial for $\thpx(G)$ and $\thpa(G)$, based on results from Cops and Robbers. As in Section \ref{s:prod-univ}, define $\kp(G,p)=\min\{|S|:\ptp(G;S)=p\}$.

\begin{thm}\label{t:ptc-le-ptp}{\rm \cite{cop-throttle}}  Let $S\subseteq V(G)$ be a PSD  zero forcing set.  Then $|S|\ge c(G)$, $\capt(G;S)\le \ptp(G;S)$,    
and $c(G)\le \Zp(G)$.  
If $T$ is a tree and $S\subseteq V(T)$, then $\capt(T;S)=\ptp(T;S)=\ecc(S)$ for $S\subseteq V(T)$.  Thus $\capt_k(T)=\ptp(T,k)=\rad_k(T)$.
\end{thm}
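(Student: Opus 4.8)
The plan is to separate the three general inequalities, which I would obtain from a single cop strategy built out of the PSD forcing process, from the tree identities, which I would get by sandwiching $\capt(T;S)$ and $\ptp(T;S)$ between two copies of $\ecc(S)$. First note that $c(G)\le\Zp(G)$ is an immediate consequence of the first inequality: applying $|S|\ge c(G)$ to a minimum PSD zero forcing set $S$ (so that $|S|=\Zp(G)$) gives $c(G)\le\Zp(G)$. Likewise, exhibiting a winning strategy for $|S|$ cops that captures within $p:=\ptp(G;S)$ rounds simultaneously proves $|S|\ge c(G)$ (since $c(G)$ is the least number of cops that can win) and $\capt(G;S)\le\ptp(G;S)$. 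So everything except the tree statements reduces to constructing that strategy.

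For the strategy, run the PSD process from $S$ to obtain $S=S^{[0]}\subseteq\cdots\subseteq S^{[p]}=V(G)$ together with a chronological set of forces, and place one cop on each vertex of $S$. Once the robber is placed it lies in a single component $R_0$ of $G-S$. I would maintain the invariant that after the cops' move in round $i$ the robber is confined to a component $R_i$ of $G-S^{[i]}$ with $R_{i+1}\subseteq R_i$, and that every vertex of $S^{[i]}$ adjacent to $R_i$ carries a cop. The key structural fact is that, restricted to the single component $R_i$, PSD forcing is injective: a forcer $v$ turns its unique white neighbour in $R_i$ blue, so distinct vertices of $R_i$ turned blue in round $i+1$ have distinct forcers lying on the boundary of $R_i$. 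Hence the boundary cops can move bijectively, each along its force $v\to w$ into $R_i$; the number of cops never exceeds $|S|$, and cops not on the boundary simply stay put.

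To see the invariant is preserved I would check two points. Vacating a forcer is safe: if $v\to w$ with $w\in R_i$, then $w$ was the unique white neighbour of $v$ in $R_i$, so $v$ has no neighbour in $R_i\setminus\{w\}\supseteq R_{i+1}$ and therefore leaves the boundary. And every boundary vertex $u$ of $R_{i+1}$ carries a cop: since $R_{i+1}\subseteq R_i$, if $u$ was already blue then it was a boundary vertex of $R_i$ whose cop did not move (it cannot have forced into $R_i$, for then it would not still touch $R_{i+1}$), while if $u$ turned blue in round $i+1$ it must have lain in $R_i$ (a vertex from another component of $G-S^{[i]}$ cannot be adjacent to $R_i$) and so received a cop. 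Because $R_p=\emptyset$ and the guarded boundary prevents the robber from ever leaving its current component, the robber is captured by round $p$, yielding $\capt(G;S)\le p$ with only $|S|$ cops.

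For trees I would first show $\ptp(T;S)=\ecc(S)$ by a breadth-first argument: in a tree two neighbours of a blue vertex $u$ lie in different components of $T-S^{[i]}$ (their only connecting path runs through $u$), so $u$ turns every white neighbour blue in a single round; inductively $S^{[i]}=\{v:\dist(S,v)\le i\}$, whence $S^{[\ecc(S)]}=V(T)$ and $\ptp(T;S)\le\ecc(S)$, while the reverse is the standard distance bound. Combining this with the general inequality and with $\ecc(S)\le\capt(T;S)$ (the robber sits at a vertex at distance $\ecc(S)$ from $S$ and each round a cop closes at most one unit of that distance) gives $\ecc(S)\le\capt(T;S)\le\ptp(T;S)=\ecc(S)$, so all three coincide; minimising over $|S|=k$ then yields $\capt_k(T)=\ptp(T,k)=\rad_k(T)$. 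The main obstacle is the verification that the guarding invariant survives each round using exactly $|S|$ cops, that is, the injectivity of forcing within one component together with the safe-vacating property; the endgame capture and the tree identities are comparatively routine.
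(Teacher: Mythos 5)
Your proof is correct, and it takes essentially the same route as the argument in the cited source \cite{cop-throttle} (this chapter states Theorem~\ref{t:ptc-le-ptp} without proof): cops placed on $S$ shadow the PSD forces into the robber's component, with injectivity of forcing within a single component and the guarded-boundary invariant confining the robber to a shrinking component of $G-S^{[i]}$, which yields $|S|\ge c(G)$ and $\capt(G;S)\le\ptp(G;S)$ simultaneously. The tree identities via the breadth-first behaviour of PSD forcing on trees plus the eccentricity lower bound on capture time likewise match the standard argument, so there are no gaps to report.
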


Note that $c(G)$ and $\Zp(G)$ can be substantially different, resulting in very different product throttling numbers.  For example, $\Zp(K_n)=n-1$ whereas $c(K_n)=1$.

\subsection{Initial cost definition $\thpx(G)$}

Let $G$ be a graph of order $n$. Define $\thpx(G,k)=k(1+\ptp(G,k))$ and \[\thpx(G)=\min_{\Zp(G) \le k \le n}\thpdx(G,k)=\min_{\Zp(G) \le k\le n}k(1+\ptp(G,k)).\]

The  results in the next remark follow immediately from the universal forms of these results in Section \ref{ss:univ}.

   \begin{rem}\label{o:thpx-from-univ} Let $G$ be a  graph of order $n$.
  \ben[$(1)$]
   \item $\Zp(G)\le  \thpx(G)\le n$.  
 If $G$ is connected and $n\ge 2$, then $\Zp(G)+1\le  \thpx(G)$. 
 \item $\thpx(G)\le \Zp(G)(1+\ptp(G))$.  
\item $\thpx(G)\le 2k_+(G,1)$.
\item $\thpx(G)\ge  \min_{\Zp(G)\le k\le n}k(1+\rad_k(G))$.
\item If $\Zp(G)\ge \frac n 2$, then $\thpx(G)=n$. Examples include $\thpx(K_n)=n$ and $\thpx(Q_d)=2^d$.   \een
  \end{rem}

The next result follows from Theorems \ref{t:thcx-chord} and \ref{t:ptc-le-ptp} since a tree is chordal.

\begin{cor} Let $T$ be a tree.  Then  $ \thpx(T,k)= \thcx(T,k)$ and $ \thpx(T)= \thcx(T)=1+\rad T$.  
In particular, $\thpx(P_n)=1+\lc\frac{n-1}2\rc$. 

For any graph $G$,  $ \thcx(G,k)\le \thpx(G,k)$ 
for $c(G)\le k\le n$. 
\end{cor}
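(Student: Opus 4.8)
The plan is to deduce all four assertions directly from Theorems~\ref{t:thcx-chord} and~\ref{t:ptc-le-ptp}, first establishing the per-$k$ equalities and then passing to the minima. First I would fix $k$ and invoke the tree part of Theorem~\ref{t:ptc-le-ptp}, which gives $\capt_k(T)=\ptp(T,k)=\rad_k(T)$. Substituting this into the definitions $\thcx(T,k)=k(1+\capt_k(T))$ and $\thpx(T,k)=k(1+\ptp(T,k))$ immediately yields $\thpx(T,k)=\thcx(T,k)=k(1+\rad_k(T))$ for every $k$, which is the first assertion.

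For the equality of the (minimized) throttling numbers, the key preliminary is that the two minimizations run over the same range of $k$. Here I would observe that $\Zp(T)=1$ for a tree: since $\ptp(T;S)=\ecc(S)$ is finite for every nonempty $S$ by Theorem~\ref{t:ptc-le-ptp}, every nonempty vertex set is a PSD forcing set. Combined with $1\le c(T)\le\Zp(T)$ (the inequality $c(G)\le\Zp(G)$ being part of Theorem~\ref{t:ptc-le-ptp}), this forces $c(T)=\Zp(T)=1$, so both $\thpx(T)$ and $\thcx(T)$ minimize the common function $k(1+\rad_k(T))$ over $1\le k\le n$. Hence $\thpx(T)=\thcx(T)$, and since a tree is chordal, Theorem~\ref{t:thcx-chord} identifies this common value as $1+\rad(T)$. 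The path case is then the specialization $\rad(P_n)=\lc\frac{n-1}2\rc$, obtained by computing the eccentricity $\max(i-1,n-i)$ of the $i$th vertex of $P_n$ and minimizing over $i$.

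For the general inequality I would prove $\capt_k(G)\le\ptp(G,k)$ for $c(G)\le k\le n$ and then multiply by $k$. If $k<\Zp(G)$ then $\ptp(G,k)=\infty$ and the inequality is vacuous, so assume $k\ge\Zp(G)$ and pick a PSD forcing set $S$ of size $k$ realizing $\ptp(G,k)$. Theorem~\ref{t:ptc-le-ptp} gives $\capt(G;S)\le\ptp(G;S)=\ptp(G,k)$, and since $S$ is one admissible $k$-cop placement, $\capt_k(G)\le\capt(G;S)$. Chaining these yields $\capt_k(G)\le\ptp(G,k)$, whence $\thcx(G,k)=k(1+\capt_k(G))\le k(1+\ptp(G,k))=\thpx(G,k)$.

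There is no deep obstacle, as this is a corollary of the two cited theorems; the step requiring the most care is the bookkeeping around the ranges of minimization together with the infinite-value edge case $k<\Zp(G)$, and remembering that $\capt_k$ minimizes over (multi)sets so that a distinct-vertex PSD forcing set of size $k$ qualifies as a cop placement. Once $c(T)=\Zp(T)=1$ is pinned down, the remaining claims are substitution into Theorems~\ref{t:thcx-chord} and~\ref{t:ptc-le-ptp}.
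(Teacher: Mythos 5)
Your proposal is correct and follows the same route as the paper, which derives this corollary directly from Theorems~\ref{t:thcx-chord} and~\ref{t:ptc-le-ptp} (the paper states this in one line without spelling out details). Your additional bookkeeping --- pinning down $c(T)=\Zp(T)=1$ so that both minimizations range over $1\le k\le n$, and handling the $k<\Zp(G)$ case where $\ptp(G,k)=\infty$ --- is exactly the implicit content of the paper's citation, carefully made explicit.
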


  The next result follows from Remark~\ref{r:univ-low} in the case $G$ is connected, and the analysis of the disconnected case is straightforward (for $n=4$ see \cite{sage}).

\begin{rem} Let $G$ be a graph. 
\ben[$(1)$]
\item $\thpx(G)=1$  if and only if $G=K_1$.
\item $\thpx(G)=2$ if and only if $\Zp(G)=1$ and $\ptp(G)=1$ ( i.e., $G=K_{1,n-1}$) or $G=2K_1$.  
\item\label{px3} $\thpx(G)=3$ if and only if $G$ satisfies exactly 
one of the following conditions:  
		\ben[(a)]
			\item $G=K_3$, $G=K_2\du K_1$, or $G=3K_1$.
			\item $\Zp(G)=1$  and    $\ptp(G,1)=2$ (i.e., $G$ is a tree and $\rad G=2$). 
		\een
\item $\thpx(G)=4$ if and only if  $G$ satisfies at least one of the following conditions:
		\ben[(a)]
			\item $G=K_4$, $G=K_3\du K_1$, $G=K_2\du 2K_1$, or $G=4K_1$.
			\item $\ptp(G,2)=1$ and $\ptp(G,1)> 3$.
			\item $\Zp(G)=1$ and $\ptp(G,1)=3$.
		\een
\een
\end{rem}

\subsection{No initial cost  definition $\thpa(G)$}

Let $G$ be a connected graph of order $n\ge 2$. Define $\thpa(G,k)=k\ptp(G,k)$ and \[\thpa(G)=\min_{\Zp(G)\le k< n}k\ptp(G,k)=\min_{\Zp(G)\le k < n}\thpa(G,k).\]

The  results in the next remark follow immediately from the universal forms of these results in Section \ref{ss:univ}.

   \begin{rem}\label{o:thpa-from-univ} Let $G$ be a connected graph of order $n\ge 2$.
   \ben[$(1)$]
   \item $\Zp(G)\le  \thpa(G)\le \kp(G,1)\le n-1$.  
   \item $\thpa(G)\le \Zp(G)\ptp(G)$ and $ \thpa(G)=\Zp(G)$ if and only if $\ptp(G)=1$.
\item If $G'$ is a connected graph  of order $n'\ge 2$ that is an induced subgraph of  $G$,  and $\thpa(G')=\kp(g,1)$, then
 \[\thpa(G)\le  n-n'+\thpa(G').\]
 \item $\thpa(G)\ge \thp(G)-1$.  
 \item If $\thp(G)=\thp(G,1)=1+\ptp(G,1)$ or $\thp(G)=\thpa(G,\kp(G,1))=\kp(G,1)+1$, then $\thpa(G)=\thp(G)-1$.
 \item $\thpa(G)\ge  \min_{\Zp(G)\le k< n}k\rad_k(G)$
 \een
  \end{rem}

The next result follows from Theorem 
\ref{t:ptc-le-ptp}. 

\begin{cor} Let $T$ be a tree.  Then   $ \thpa(T,k)= \thca(T,k)$ and \[ \thpa(T)= \thca(T)=\min_{\Zp(G)\le k< n}k\rad_k(G).\]  
For any graph $G$,   $ \thca(G,k)\le \thpa(G,k)$ for $c(G)\le k< n$. 
\end{cor}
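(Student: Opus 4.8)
The plan is to derive every assertion directly from Theorem~\ref{t:ptc-le-ptp}, so the only real work is bookkeeping about the ranges of $k$ over which the two throttling numbers minimize. First I would record the per-$k$ identity. Theorem~\ref{t:ptc-le-ptp} gives $\capt_k(T)=\ptp(T,k)=\rad_k(T)$ for a tree $T$ and every admissible $k$, so multiplying by $k$ yields
\[
\thca(T,k)=k\capt_k(T)=k\rad_k(T)=k\ptp(T,k)=\thpa(T,k),
\]
which is the first claimed equality and simultaneously identifies both $k$-throttling numbers with $k\rad_k(T)$. I would also note that $c(T)=\Zp(T)=1$: since $\ptp(T;\{v\})=\capt(T;\{v\})=\ecc(\{v\})$ is finite for a connected tree, a single vertex is both a PSD forcing set and a one-cop win, forcing both parameters to be $1$. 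Hence $\thca(T)=\min_{1\le k\le\gamma(T)}k\rad_k(T)$ and $\thpa(T)=\min_{1\le k<n}k\rad_k(T)$, and the latter is exactly the displayed expression $\min_{\Zp(T)\le k<n}k\rad_k(T)$.

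The hard part will be reconciling these two minimization ranges, since the cop range $\{1,\dots,\gamma(T)\}$ is a proper subrange of the PSD range $\{1,\dots,n-1\}$. The PSD range contains the cop range, so $\thpa(T)\le\thca(T)$ is immediate; the reverse inequality is the crux. The key observation I would use is that $\rad_k(T)=1$ for all $\gamma(T)\le k<n$: extending a minimum dominating set to any size $k<n$ produces a dominating set $S\ne V(T)$ with $\ecc(S)=1$, while $\ecc(S)\ge 1$ whenever $S\ne V(T)$. Consequently, for the extra indices $\gamma(T)<k<n$ we have $k\rad_k(T)=k>\gamma(T)$, whereas at $k=\gamma(T)$ the product is $\gamma(T)\cdot 1=\gamma(T)$, an index already inside the cop range. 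Therefore the minimum over $\{1,\dots,n-1\}$ is attained at some $k\le\gamma(T)$, which gives $\thpa(T)=\thca(T)$ and completes the chain of equalities. (Here $\gamma(T)\le n-1$ by Theorem~\ref{half}, so the cop range indeed sits inside the PSD range.)

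Finally, for the general inequality $\thca(G,k)\le\thpa(G,k)$ with $c(G)\le k<n$, I would invoke the bound $\capt(G;S)\le\ptp(G;S)$ from Theorem~\ref{t:ptc-le-ptp}. When $\Zp(G)\le k<n$, choose a PSD forcing set $S$ with $|S|=k$ and $\ptp(G;S)=\ptp(G,k)$; then $\capt_k(G)\le\capt(G;S)\le\ptp(G;S)=\ptp(G,k)$, and multiplying by $k$ yields the claim. For $c(G)\le k<\Zp(G)$ we have $\ptp(G,k)=\infty$, so $\thpa(G,k)=\infty$ and the inequality holds trivially. I expect the two immediate steps (the per-$k$ identity and the general inequality) to be routine consequences of the cited theorem, with all the genuine content concentrated in the range-reconciliation argument via the fact that $k\rad_k(T)$ equals $k$ once $k\ge\gamma(T)$.
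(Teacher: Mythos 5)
Your proposal is correct and takes essentially the same route as the paper, which proves this corollary by deriving everything directly from Theorem \ref{t:ptc-le-ptp} (the per-$k$ identity $\thca(T,k)=k\rad_k(T)=\thpa(T,k)$ for trees, and $\capt(G;S)\le\ptp(G;S)$ for the general inequality). Your explicit reconciliation of the two minimization ranges --- observing that $\rad_k(T)=1$ for $\gamma(T)\le k<n$, so the indices beyond $\gamma(T)$ satisfy $k\rad_k(T)=k\ge\gamma(T)\cdot\rad_{\gamma(T)}(T)$ and cannot lower the minimum --- is a detail the paper leaves implicit, and you handle it correctly.
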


The next remark lists values of $\thpa(G)$ for various  families of graphs $G$. 

\begin{rem}\label{p:families}  PSD propagation time agrees with capture time for paths and cycles, so the values for these graphs follow from Remark \ref{thca:vals}.   
 For the remaining graphs $G$, it  is well known that $\ptp(G)=1$ \cite{PSDproptime}. 
\ben[$(1)$]
\item For $n\ge 2$, $\thpa(P_n)=\gamma(P_n)=\lc\frac n 3\rc$.  
\item For $n\ge 4$, $\thpa(C_n)=\gamma(C_n)=\lc\frac n 3\rc$.
\item For $n\ge 2$, $\thpa(K_n)=\Zp(K_n)=n-1$.
\item For $1\le p\le q$, $\thpa(K_{p,q})=\Zp(K_{p,q})=p$.
\item For $d\ge 1$, $\thpa(Q_d)=\Zp(Q_d)=2^{d-1}$ where $Q_d$ is the hypercube. 
\een
\end{rem}

Let $\alpha(G)$ denote the independence number of $G$.  The next result is analogous to Proposition  2.9 in \cite{PSDthrottle}.

\begin{prop}\label{alpha} $\thpa(G)\le n-\alpha(G)$.
\end{prop}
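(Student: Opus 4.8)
The plan is to exhibit a single PSD zero forcing set that achieves the bound, namely the complement of a maximum independent set, and to show it has propagation time one. Let $I\subseteq V(G)$ be a maximum independent set, so $|I|=\alpha(G)$, and set $S=V(G)\setminus I$. Then $|S|=n-\alpha(G)$. Since $G$ is connected of order $n\ge 2$ it has an edge, so $\alpha(G)\le n-1$ and hence $1\le |S|\le n-1$; in particular $S$ is a proper subset and $k=|S|$ is an admissible index in the definition of $\thpa(G)$.

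The key step is to verify that $\ptp(G;S)=1$. Color $S$ blue; the white vertices are then exactly those of $I$. Because $I$ is independent, $G-S=G[I]$ has no edges, so the components $W_1,\dots,W_k$ of $G-S$ are the singletons $\{v\}$ with $v\in I$. Fix such a component $W=\{v\}$. The induced subgraph $G[W\cup S]$ has exactly one white vertex, namely $v$. As $G$ is connected and has order at least two, $v$ has a neighbor, and this neighbor must lie in $S$ (otherwise two vertices of $I$ would be adjacent); call it $u$. Then $v$ is the unique white neighbor of $u$ in $G[W\cup S]$, so the PSD color change rule permits $u\to v$. This force is available for every $v\in I$ directly from the initial coloring, so all of $I$ turns blue in a single round and $S^{[1]}=V(G)$. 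Thus $S$ is a PSD zero forcing set with $\ptp(G;S)=1$.

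Finally, since $|S|=n-\alpha(G)<n$, we have $\ptp(G,n-\alpha(G))\le \ptp(G;S)=1$, and equality in fact holds because $k<n$ forces positive propagation time. Therefore
\[
\thpa(G)\le (n-\alpha(G))\,\ptp(G,n-\alpha(G))=n-\alpha(G),
\]
as claimed.

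There is no real obstacle here beyond correctly applying the PSD color change rule: the only subtlety is recognizing that each white vertex sits in its own component of $G-S$, so that every neighbor in $S$ sees it as its unique white neighbor in the relevant induced subgraph, which is precisely what makes the single round of forcing go through.
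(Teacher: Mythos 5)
Your proposal is correct and follows exactly the paper's argument: take $S=V(G)\setminus I$ for a maximum independent set $I$ and observe that $\ptp(G;S)=1$, giving $\thpa(G)\le |S|\cdot 1=n-\alpha(G)$. The only difference is that you spell out the application of the PSD color change rule (each white vertex forms a singleton component of $G-S$ and is forced by any blue neighbor), which the paper leaves implicit.
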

\bpf
Let $S'$ be a maximum independent set of vertices (so  $|S'|=\alpha(G)$) and let $S=V(G)\setminus S'$.  Then $\ptp(G;S)=1$ and  $\thpa(G)\le n-\alpha(G)$.
\epf\vspace{-10pt}

\subsection*{Extreme values of $\thpa(G)$}

\begin{rem} Let $G$ be a connected graph of order $n\ge 2$. From Remark \ref{r:univ-low}  we have: 

\ben[$(1)$]
\item $\thpa(G)=1$ if and only if $\Zp(G)=1$ and $\ptp(G)=1$, i.e., $G=K_{1,n-1}$.
\item $\thpa(G)=2$ if and only if 
$G$ satisfies one of the following conditions:
		\ben[(a)]
			\item $\Zp(G)\le 2$, $\ptp(G,2)=1$, and $\ptp(G,1)> 2$. 
			\item $\Zp(G)=1$  and $\ptp(G,1)=2$ (i.e., $G$ is a tree and $\rad(G)=2$).
		\een
\item 
$\thpa(G)=3$ if and only if
$G$ satisfies  
one of the following conditions:
		\ben[(a)]
			\item $\Zp(G)\le 3$, $\ptp(G,3)=1$, $\ptp(G,2)\ge 2$, and $\ptp(G,1)> 3$.
			\item $\Zp(G)=1$, $\ptp(G,1)=3$, and $\ptp(G,2)\ge 2$  (so $G$ is a tree, $\rad(G)=3$, and no two vertices dominate $G$).
		\een
\een
\end{rem}

\begin{rem}\label{th+n-1}
Since we require $G$ to be connected and of order at least two, $\thpa(G)\le n-1$.  Furthermore, $\thpa(G)=n-1$ if and only if $G=K_n$ because $\thpa(G)=n-1$ implies  $\alpha(G)=1$ by Proposition \ref{alpha}, so $G$ is complete (and   $\thpa(K_n)=n-1$). 
\end{rem}

Define the set $\HH$ to be the set of all connected graphs $G$ of order at least two such that 
 $\alpha(G)=2$ and
 $G$ does not have an induced 5-cycle or house subgraph (see  Figure \ref{fig:hithpaforbid}).
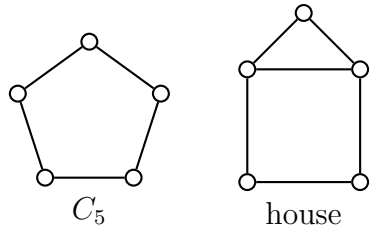
\begin{figure}[!h]
\begin{center}
\begin{tikzpicture}[scale=1]
  \vertex (v1) at (.951,.309){};
  \vertex (v2) at (0,1){};
  \vertex (v3) at (-.951,.309){};
  \vertex (v4) at (-.588,-.809){};
  \vertex (v5) at (.588,-.809){};
  \vertex[color=white,label={below:$C_5$}] (label) at (0,-.809){};
  \foreach \m/\n in {1/2,2/3,3/4,4/5,5/1}{ \draw[thick] (v\m) to (v\n); }
\end{tikzpicture}
\hspace{.25in}
\begin{tikzpicture}[scale=1]
  \vertex (v1) at (0,0){};
  \vertex (v2) at (1.5,0){};
  \vertex (v3) at (1.5,1.5){};
  \vertex (v4) at (0,1.5){};
  \vertex (v5) at (.75,2.25){};
  \vertex[color=white,label={below:house}] (label) at (.75,0){};
\foreach \m/\n in {1/2,2/3,3/4,4/1,5/3,5/4}{ \draw[thick] (v\m) to (v\n); }
\end{tikzpicture}
\caption{Graphs forbidden as induced subgraphs of $G\in\HH$.\label{fig:hithpaforbid}\vspace{-20pt}}
\end{center}
\end{figure}

\begin{rem}\label{r:G-S_comp} Suppose $G\in \HH$ and let $S\subset V(G)$.  Since $\alpha(G)=2$,  $G-S$ is connected or consists of two connected components. 
 If $G-S$ has two connected components, then each is a clique.
\end{rem}

It is straightforward to verify the statements in the next lemma. 
\begin{lem}\label{l:kp-alg} Let $n\ge 3$.  
\ben[$(1)$]
\item For $k=1,\dots,n-2$, $k(n-k-1)\ge n-2$.
\item For $k=2,\dots, n-2$, $k\frac {n-k}2\ge n-2$.
\een
\end{lem}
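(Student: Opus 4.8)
The plan is to recognize each expression as a concave quadratic in $k$ and exploit the elementary fact that a concave function attains its minimum over a closed interval at one of the endpoints. For part (1), I would set $f(k)=k(n-k-1)=-k^2+(n-1)k$, a downward-opening parabola in the real variable $k$ and hence concave; therefore on the interval $[1,n-2]$ its minimum over the integer points $k\in\{1,\dots,n-2\}$ is attained at one of the endpoints $k=1$ or $k=n-2$. Evaluating gives $f(1)=1\cdot(n-2)=n-2$ and $f(n-2)=(n-2)\cdot 1=n-2$, so $f(k)\ge n-2$ for every such $k$, which is exactly the claim.

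For part (2) I would argue identically with $g(k)=\frac{k(n-k)}2=-\frac12 k^2+\frac n2 k$, again concave. On the interval $[2,n-2]$ the minimum occurs at an endpoint, and $g(2)=\frac{2(n-2)}2=n-2$ while $g(n-2)=\frac{(n-2)\cdot 2}2=n-2$, so $g(k)\ge n-2$ throughout the stated range.

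There is essentially no obstacle here; the only point needing a word of justification is the appeal to concavity, namely that a concave function on a closed interval is bounded below by the smaller of its two endpoint values. If one prefers a self-contained algebraic verification that avoids any reference to concavity, I would instead record the factorizations $k(n-k-1)-(n-2)=(k-1)(n-2-k)$, which is a product of two nonnegative factors for $1\le k\le n-2$, and $\frac{k(n-k)}2-(n-2)=\frac{(k-2)(n-2-k)}2$, which is manifestly nonnegative for $2\le k\le n-2$. Either route closes both inequalities immediately.
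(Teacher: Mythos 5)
Your proof is correct. The paper offers no argument at all for this lemma---it simply states that the inequalities are ``straightforward to verify''---and your two routes (endpoint evaluation of a concave quadratic, or the factorizations $k(n-k-1)-(n-2)=(k-1)(n-2-k)$ and $\frac{k(n-k)}{2}-(n-2)=\frac{(k-2)(n-2-k)}{2}$) are exactly the kind of routine verification the authors had in mind, with the factorization version being the cleanest way to record it.
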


\begin{lem}\label{l:G-S_sum}{\rm \cite{PSDthrottle}}
Let $G\in \HH$ and let $S\subset V(G)$ be  such that  $G-S$ is  connected and $|S|\le n-3$.  Then $\ptp(G;S)=n-|S|-1$.
\end{lem}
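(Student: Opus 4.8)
The plan is to analyze the PSD forcing process round by round and to bound how many white vertices can turn blue in a single round. Write $w=n-|S|=|V(G)\setminus S|\ge 3$, and let $f_i$ denote the number of vertices forced in round $i$, so that $\sum_{i=1}^{p} f_i=w$, where $p=\ptp(G;S)$ and each $f_i\ge 1$ (since the statement asserts a finite propagation time, I take $S$ to be a PSD forcing set, so progress is made while any white vertex remains). Because $p=w-1$ is equivalent to $\sum_i(f_i-1)=1$, the whole statement reduces to showing that exactly one round performs two forces while every other round performs a single force.

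The structural heart of the argument is the claim that if at the start of a round the white set $W$ is connected and $|W|\ge 3$, then at most one vertex is forced. To prove it, I would suppose two white vertices $u_1,u_2$ are forced; since a blue vertex has at most one unique white neighbor inside a single white component, they are forced by distinct blue vertices $v_1,v_2$, and I pick a third white vertex $z\in W$. From $v_1\to u_1$ and $v_2\to u_2$ we get $v_1\not\sim u_2,z$ and $v_2\not\sim u_1,z$. Applying $\alpha(G)=2$ to the triples $\{v_1,v_2,z\}$, $\{v_1,u_2,z\}$, and $\{v_2,u_1,z\}$ forces $v_1\sim v_2$, $u_2\sim z$, and $u_1\sim z$. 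Then the induced subgraph on $\{v_1,u_1,z,u_2,v_2\}$ is an induced $C_5$ when $u_1\not\sim u_2$ and an induced house when $u_1\sim u_2$, contradicting $G\in\HH$ in either case. Hence, while the white set is connected and has at least three vertices, exactly one vertex is forced per round.

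With this in hand, the remaining work is to track the white set through the process. By Remark~\ref{r:G-S_comp} the white set always has at most two components, each a clique. I would show that, starting from the connected set $V(G)\setminus S$, the forcing produces exactly one white component and forces one vertex per round until precisely two white vertices remain, and that this terminal pair is non-adjacent and so is forced simultaneously in one final round; that final round is the unique double-force and gives $p=w-1$. This endgame is where I expect the real difficulty: one must rule out an early split of the white set into two cliques (which could allow two forces across several rounds and push $p$ below $w-1$) and guarantee that the last pair is independent rather than adjacent. Both facts should again be extracted from $\alpha(G)=2$ together with the absence of induced $C_5$ and house, by directly analyzing the small white configurations ($|W|\in\{2,3\}$ and two-clique splits).

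Finally, the two one-line bounds fall out of this count: the lower bound $p\ge w-1$ needs only that no round forces three or more and that at most one round forces two, while the upper bound $p\le w-1$ needs only the single double-force in the terminal round. The main obstacle, as noted, is the endgame bookkeeping that pins the value to exactly $w-1$ rather than merely $w$, and I expect the connectivity hypothesis on $G-S$ and the two forbidden induced subgraphs to be precisely the levers that drive that analysis.
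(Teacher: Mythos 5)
First, a point of comparison: the paper does not actually prove this lemma---it is quoted from \cite{PSDthrottle}, and the only related argument the paper gives is its proof of Lemma~\ref{l:G-S_lem1} (the two-component analogue)---so your attempt can only be judged on its own merits. Your structural claim is correct and correctly proved: two simultaneous forces inside a connected white set with at least three white vertices yield an induced $C_5$ or house, which is exactly the style of argument used for Lemma~\ref{l:G-S_lem1}. This gives ``at most one force per round while the white set is connected and has at least three vertices,'' and, combined with one easy fact you flag but do not prove (see below), it yields the inequality $\ptp(G;S)\ge n-|S|-1$.

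The genuine problem is the endgame you defer, and it is not merely difficult---it is impossible, because the equality as stated is false. Take $G=P_4$ with vertices $v_1v_2v_3v_4$ and $S=\{v_1\}$: then $G\in\HH$ (connected, $\alpha(G)=2$, and too small to contain an induced $C_5$ or house), $G-S=P_3$ is connected, and $|S|=1=n-3$; yet the forcing proceeds $v_1\to v_2$, $v_2\to v_3$, $v_3\to v_4$, one force per round, so $\ptp(P_4;S)=3=n-|S|$, not $n-|S|-1=2$. (Finiteness can fail too: in $C_4$ with $S$ a single vertex, no force is ever possible, so $\ptp(C_4;S)=\infty$.) Moreover, the picture your plan relies on is structurally backwards. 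In any graph with $\alpha(G)=2$, a single force can never disconnect a connected white set: if $v\to u$ and $W\setminus\{u\}$ split into pieces containing $a$ and $b$, then $\{v,a,b\}$ would be an independent triple, since $u$ is the unique white neighbor of $v$. Hence the white set stays connected throughout, the terminal pair of white vertices is always \emph{adjacent}, and the ``non-adjacent pair forced simultaneously in one final round'' that your count requires never occurs; one can further check (again by exhibiting an induced house or $C_5$, using the forcer from the three-white-vertex round) that a simultaneous double force on the adjacent terminal pair is also impossible, so for a PSD forcing set $S$ with $G-S$ connected the true value is $\ptp(G;S)=n-|S|$. What survives of your argument---your structural claim plus the connectivity-preservation observation above---is precisely the lower bound $\ptp(G;S)\ge n-|S|-1$, and that inequality is all the paper's subsequent theorem (the characterization of $\thpa(G)=n-2$) actually uses; the lemma should be read, or restated, in that form.
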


\begin{lem}\label{l:G-S_lem1}
Let $G\in \HH$ and let $S\subset V(G)$ be  such that  $G-S$ is consists of two nonempty connected components,  $G[W_1]$ and $G[W_2]$.  Then $|S^{(j)}\cap W_i|\le 1$ for $i=1,2$ and $j=1,\dots,\ptp(G;S)$.
\end{lem}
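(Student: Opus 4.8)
The plan is to fix a round $j\in\{1,\dots,\ptp(G;S)\}$, write $B=S^{[j-1]}$ for the set of vertices that are blue entering that round, and show that at most one vertex of $W_1$ (and, by the identical argument, of $W_2$) lies in $S^{(j)}$. The first step is to pin down the component structure governing the PSD rule. Applying Remark~\ref{r:G-S_comp} to $S$ shows $W_1$ and $W_2$ are cliques, and since they are distinct components of $G-S$ there are no edges of $G$ joining $W_1$ to $W_2$. As $B\supseteq S$, the white vertices are exactly $(W_1\setminus B)\du(W_2\setminus B)$, each part a clique (a subset of a clique) and with no edges between them; hence the components of $G-B$ are precisely $G[W_1\setminus B]$ and $G[W_2\setminus B]$. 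Consequently the PSD color change rule applied to $w\in W_1\setminus B$ simplifies to: $w$ turns blue in round $j$ if and only if there is a $v\in B$ with $N(v)\cap(W_1\setminus B)=\{w\}$.

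Next I would argue by contradiction: suppose distinct $w,w'\in W_1$ both turn blue in round $j$, forced by $v,v'\in B$ with $N(v)\cap(W_1\setminus B)=\{w\}$ and $N(v')\cap(W_1\setminus B)=\{w'\}$. I first locate the forcers. A forcer of a $W_1$-vertex cannot lie in $W_2$ (no $W_1$--$W_2$ edges), and it cannot lie in $W_1$ (a blue vertex of the clique $W_1$ would also be adjacent to the white vertex $w'$, violating uniqueness of its white neighbor $w$); hence $v,v'\in S$, and in particular $v\ne v'$. Recording adjacencies, $w\sim w'$ (clique), $vw$ and $v'w'$ are edges, while $vw'$ and $v'w$ are non-edges. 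I would then introduce an arbitrary $z\in W_2$, nonempty by hypothesis; since there are no $W_1$--$W_2$ edges, $z\not\sim w$ and $z\not\sim w'$, and applying $\alpha(G)=2$ to the independent pairs $\{z,w'\}$ and $\{z,w\}$ forces $v\sim z$ and $v'\sim z$ (otherwise $\{z,w',v\}$ or $\{z,w,v'\}$ would be independent of size three). Note that $z$ need not be white, so this step is valid even in the final round when $W_2\setminus B$ may be empty.

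Finally I would split on the single remaining unknown adjacency, $vv'$, on the (distinct) vertices $\{v,v',w,w',z\}$. If $v\not\sim v'$, the edges present are exactly $vw,\,ww',\,w'v',\,v'z,\,zv$, giving the induced $5$-cycle $w-v-z-v'-w'-w$, since the five remaining pairs $vw',v'w,wz,w'z,vv'$ are all non-edges. If $v\sim v'$, the same five edges together with $vv'$ give an induced house: $\{v,w,w',v'\}$ induces the square $v-w-w'-v'-v$, and $z$ is adjacent to the two adjacent square vertices $v,v'$ while the required non-edges $vw',v'w,zw,zw'$ all hold. Either outcome contradicts $G\in\HH$, so $|S^{(j)}\cap W_1|\le 1$, and the argument for $W_2$ is symmetric.

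The main obstacle is really the only delicate point: the translation of the PSD rule into the clean local condition $N(v)\cap(W_1\setminus B)=\{w\}$, which rests on verifying that the white components never merge (no $W_1$--$W_2$ edges) and each remains a clique. Once that is in hand the contradiction is a short case check, where the only care needed is confirming that the five-vertex configurations are genuinely \emph{induced} $C_5$ and house subgraphs, i.e.\ that all the listed non-edges truly hold.
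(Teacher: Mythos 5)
Your proof is correct and takes essentially the same approach as the paper's: both locate the two forcers $v,v'$ in $S$ via the clique structure of $W_1$, record the adjacencies and non-adjacencies, bring in an arbitrary vertex $z\in W_2$, use $\alpha(G)=2$ to force $zv,zv'\in E(G)$, and split on whether $vv'\in E(G)$ to exhibit an induced $C_5$ or house, contradicting $G\in\HH$. Your write-up is merely more explicit than the paper's about why the PSD rule reduces to the condition $N(v)\cap(W_1\setminus B)=\{w\}$ (i.e., that the white components are exactly $W_1\setminus B$ and $W_2\setminus B$), a point the paper leaves implicit.
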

\bpf We show that $|S^{(j)}\cap W_i|\ge 2$ and $\alpha(G)=2$ implies $G$ has an induced house graph or  5-cycle; the construction is illustrated in Figure \ref{f:G-S_lem1}.  Suppose that $w,w'\in S^{(j)}\cap W_1$ and $w\ne w'$.  Then there exist $x,x'\in S^{(j-1)}$ such that $x\to w$ and $x'\to w'$ in round $j$. This means that $xw, x'w'\in E(G)$ and $xw', x'w\not \in E(G)$.  Since there are no edges between $W_1$ and $W_2 $ and $G[W_1]$ is a clique, $x,x'\in S$ and $ww'\in E(G)$.  Let $u\in W_2$.  Then $wu,w'u\not\in E(G)$. Since $\alpha(G)=2$, $wu,wx'\not\in E(G)$ implies $ux'\in E(G)$ and $w'u,w'x\not\in E(G)$ implies $ux\in E(G)$.  Thus $G[\{u,x,w,w', x'\}]$ is a 5-cycle or house graph depending on whether $xx'\not\in E(G)$ or $xx'\in E(G)$.  
\vspace{-10pt}\epf
\begin{figure}[!h]
\centering
\begin{tikzpicture}[thick, every fit/.style={ellipse,draw,inner sep=-2pt,text width=1cm}]
\node (a1) [color=white] at (0,.5){};
\node (a2) [color=white] at (0,-1.5){};
\vertex (w) [label=above: $w$] at (0,0){};
\vertex (wprime) [label=below: $w'$] at (0,-1){};
\node [fit=(a1) (a2),label=below:$W_1$] {};
\node (b1) [color=white] at (2,.5){};
\node (b2) [color=white] at (2,-1.5){};
\vertex (x) [label=above: $x$] at (2,.2){};
\vertex (xprime) [label=below: $x'$] at (2,-1.2){};
\node [fit=(b1) (b2),label=below:$S$] {};
\node (c1) [color=white] at (4,.5){};
\node (c2) [color=white] at (4,-1.5){};
\vertex (u) [label=below: $u$] at (4,-.5){};
\node [fit=(c1) (c2),label=below:$W_2$] {};
\draw (w) -- (wprime);
\draw (w) -- (x);
\draw (wprime) -- (xprime);
\draw (x) -- (u);
\draw (xprime) -- (u);
\draw[dashed] (x) -- (xprime);
\end{tikzpicture}
\caption{Diagram for Lemma \ref{l:G-S_lem1}. The dotted edge may or may not be present. \label{f:G-S_lem1}\vspace{-10pt}}
\end{figure}
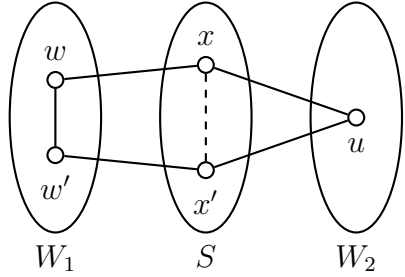

\begin{thm}  Let $G$ be a connected graph of order $n\ge 2$.  Then $\thpa(G)=n-2$ if and only if $G\in\HH$.
\end{thm}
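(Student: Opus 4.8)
The plan is to prove both implications by combining the general bound $\thpa(G)\le n-\alpha(G)$ (Proposition~\ref{alpha}) with the two structural lemmas for members of $\HH$. First I would dispose of the independence number at once: by Proposition~\ref{alpha}, $\thpa(G)=n-2$ forces $\alpha(G)\le 2$, while Remark~\ref{th+n-1} shows $\thpa(G)=n-1$ exactly when $\alpha(G)=1$ (i.e.\ $G=K_n$); hence $\thpa(G)=n-2$ already implies $\alpha(G)=2$, and conversely $\alpha(G)=2$ gives the upper bound $\thpa(G)\le n-2$ for free. So the real content is: assuming $\alpha(G)=2$, one has $\thpa(G)=n-2$ iff $G$ has no induced $C_5$ or house.

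For the direction $G\in\HH\Rightarrow\thpa(G)\ge n-2$, I would show $|S|\,\ptp(G;S)\ge n-2$ for every PSD forcing set $S\subsetneq V(G)$, writing $k=|S|$ and splitting on the structure of $G-S$, which by Remark~\ref{r:G-S_comp} is either connected or a disjoint union of two cliques $G[W_1],G[W_2]$. If $G-S$ is connected and $k\le n-3$, Lemma~\ref{l:G-S_sum} gives $\ptp(G;S)=n-k-1$ and Lemma~\ref{l:kp-alg}(1) yields $k(n-k-1)\ge n-2$; if $k\in\{n-2,n-1\}$ the bound is immediate from $\ptp(G;S)\ge 1$. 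If $G-S$ has two clique components of sizes $m_1,m_2$, Lemma~\ref{l:G-S_lem1} caps the vertices forced per round in each component at one, so $\ptp(G;S)\ge\max(m_1,m_2)\ge\frac{n-k}{2}$; for $k\ge 2$ this gives $k\,\ptp(G;S)\ge k\frac{n-k}{2}\ge n-2$ by Lemma~\ref{l:kp-alg}(2). The one delicate case is $k=1$: here I would argue that $\alpha(G)=2$ forces the single vertex $v$ of $S$ to be adjacent to all of $W_1$ or all of $W_2$ (otherwise a missed vertex in each clique together with $v$ is an independent triple), and then that if $v$ is adjacent to an entire clique component of size $\ge 2$, no force can ever enter it, contradicting that $\{v\}$ forces $G$. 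Thus one component is a single vertex and the other has size $n-2$, so Lemma~\ref{l:G-S_lem1} gives $\ptp(G;\{v\})\ge n-2$.

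For the converse I would prove the contrapositive: if $\alpha(G)=2$ but $G$ has an induced $C_5$ or house, then $\thpa(G)\le n-3$. The idea is to exhibit three vertices whose deletion leaves components each forceable in a single round, giving a set $S$ with $|S|=n-3$ and $\ptp(G;S)=1$. For an induced $5$-cycle $a_1a_2a_3a_4a_5$ I would take $S=V(G)\setminus\{a_1,a_3,a_4\}$: the white vertices split into the isolated vertex $a_1$ (forced by $a_5$) and the edge $\{a_3,a_4\}$, on which $a_2\to a_3$ and $a_5\to a_4$ because $a_2$ and $a_5$ each see exactly one endpoint; since the $C_5$ is induced there are no other white adjacencies, so all three turn blue in round one. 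For an induced house with square $b_1b_2b_3b_4$ and apex $b_5$ adjacent to $b_3,b_4$, the analogous choice $S=V(G)\setminus\{b_1,b_2,b_5\}$ works, with $b_3\to b_5$, $b_4\to b_1$, and $b_3\to b_2$ all in round one. Either way $\thpa(G)\le(n-3)\cdot 1<n-2$, which finishes the equivalence.

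The main obstacle is the $k=1$ two-component case of the lower bound: unlike the generic cases it is not settled by the arithmetic Lemma~\ref{l:kp-alg} alone, and requires using $\alpha(G)=2$ together with a PSD-forcing ``deadlock'' argument (a vertex adjacent to an entire clique of size $\ge 2$ can never initiate a force into it) to pin down that one component is a single vertex. A secondary point demanding care is verifying, in the converse, that the three deleted vertices induce exactly the claimed component structure; this is precisely where the hypothesis that the $C_5$/house is an \emph{induced} subgraph is essential, since it rules out the extra edges that would merge components or destroy the ``unique white neighbor'' condition needed for a one-round force.
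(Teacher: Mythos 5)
Your proposal is correct and follows the paper's proof almost step for step: the same reduction of the independence number via Proposition~\ref{alpha} and Remark~\ref{th+n-1}, the same split on whether $G-S$ is connected, and the same use of Lemmas~\ref{l:G-S_sum}, \ref{l:G-S_lem1}, and \ref{l:kp-alg}; your explicit one-round forcing sets for the induced $C_5$ and house are exactly what the paper obtains by citing Remark~\ref{r:thxa-induced} together with $\thpa(C_5)=\thpa(H)=2$ and $\ptp(C_5,2)=\ptp(H,2)=1$, so that part is the same argument written out by hand. The one place you genuinely diverge is the case $k=1$ with $G-S$ disconnected. The paper disposes of it by noting that a PSD forcing set of size one means $\Zp(G)=1$, hence $G$ is a tree, and the only trees with $\alpha(G)=2$ are $P_3$ and $P_4$, whose values are checked directly. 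Your alternative --- using $\alpha(G)=2$ to force $v$ to dominate one clique component, then the deadlock observation that a blue vertex adjacent to an entire white clique of size at least two can never force into it, so that component must be a singleton and Lemma~\ref{l:G-S_lem1} yields $\ptp(G;\{v\})\ge n-2$ --- is valid, and it has the modest advantage of being self-contained: it does not rely on the characterization of graphs with $\Zp(G)=1$ as trees, a fact the paper uses implicitly without citation. Both treatments of that subcase are about equally short; yours trades an external fact for a two-step structural argument, which arguably fits the spirit of the surrounding lemmas better.
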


\bpf Let $H$ denote the house graph.  Observe that $\thpa(C_5)=2$, $\ptp(C_5,2)=1$, $\thpa(H)=2$, and $\ptp(H,2)=1$.
If $G$ has an induced $C_5$ or $H$, then $\thpa(G)\le n-5+2=n-3$ by Remark \ref{r:thxa-induced} applied to PSD throttling.  If $\alpha(G)\ge 3$, then $\thpa(G)\le n-3$ by Proposition \ref{alpha}, and $\alpha(G)=1$ implies $G=K_n$ and $\thpa(G)=n-1$.  Thus $G\not\in\HH$ implies $\thpa(G) \ne n-2$.

Now assume $G\in\HH$ and let $S\subset V(G)$ such that $\thpa(G)=\thpa(G;S)=\thpa(G,k)$ for $k=|S|$.  Since $G$ is connected and  $\alpha(G)=2$, $n\ge 3$, $\thpa(G)\le n-2$, and it suffices to show that $\thpa(G,k)\ge n-2$.  Observe first that $\thpa(G,k)\ge n-2$ for $k=n-2,n-1$, so assume $k\le n-3$.  If  $G-S$ is connected, then  $\ptp(G;S)=n-k-1$ by Lemma \ref{l:G-S_sum}, so  $\thpa(G,k)\ge k(n-k-1)\ge n-2$ by Lemma \ref{l:kp-alg}.   Suppose $G-S$ is not connected.  Then $G-S$ has only two connected components.  By Lemma \ref{l:G-S_lem1}, $|S^{(j)}\cap W_i|\le 1$ for $i=1,2$ and $j=1,\dots,\ptp(G;S)$, so $|S^{(j)}|\le 2$ for  $j=1,\dots,\ptp(G;S)$.  Thus $\ptp(G;S)\ge \frac{n-k}2$, and $\thpa(G;S)=k\ptp(G;S)\ge k\frac {n-k}2\ge n-2$ for $k\ge 2$ by Lemma \ref{l:kp-alg}. If $k=1$, then $G$ is a tree,  $\alpha(G)=2$ implies $G=P_3$ or $G=P_4$, and thus $\thpa(G)=n-2$. 
  \epf


 \section{Comparisons of product throttling numbers}\label{s:compare} 

Recall that $\capt(G;S)\le \ptp(G;S)\le \pt(G;S)$ for any $S\subseteq V(G)$ (see Theorem \ref{t:ptc-le-ptp} for the first inequality), and thus \[\thcx(G)\le \thpx(G)\le \thzx(G)\mbox{ and }\thca(G)\le \thpa(G)\le\thza(G).\]
From the definitions, $\ppt(G;S)\le \pt(G;S)$ for any $S\subseteq V(G)$, so
\[\thpdx(G)\le \thzx(G)\mbox{ and } \thpda(G)\le\thza(G).\]
We establish the noncomparability of the following pairs of parameters:  $\thcx(G)$ and $\thpdx(G)$,   $\thca(G)$ and $\thpda(G)$,  $\thpx(G)$ and $\thpdx(G)$,  $\thpa(G)$ and $\thpda(G)$.

\begin{ex} Let $r,h\ge 2$. 
Recall that $T_{r,h}$ denotes a full $r$-ary tree  of height $h$ (see Proposition \ref{p:r-ary-tree}).
  Since $\Zp(T_{r,2})=c(T_{r,2})=1$ and $\capt(T_{r,2})=\ptp(T_{r,2})=2$, $\thpx(T_{r,2})=\thcx(T_{r,2})=3$ and $\thpa(T_{r,2})=\thca(T_{r,2})=2$.  Since $\pd(T_{r,2})=\gamma(T_{r,2})=r$, $\thpdx(T_{r,2})=2r>3$ and $\thpda(T_{r,2})=r>2$ for $r\ge 3$.
\end{ex}

\begin{ex}  The \emph{generalized wheel} $GW(k,r)$ with  $k\ge 4$ and $r\ge 1$ is the graph of order $kr+1$ constructed from $C_k\cp P_r$ by adding a new vertex  adjacent to every vertex in one end copy of $C_k$;  Figure \ref{f:gw62} shows $GW(6,2)$.  Let  $k\ge 5$, so $\gamma(GW(k,2))\ge 3$.  It is immediate that  $\thpdx(GW(k,2))=3$ and  $\thpda(GW(k,2))=2$, whereas $c(GW(k,2))=2$ and $\capt(GW(k,2))\ge 2$, so  $\thpx(GW(k,2))\ge \thcx(GW(k,2))\ge 6$ and $\thpa(GW(k,2))\ge \thca(GW(k,2))\ge 3$.
  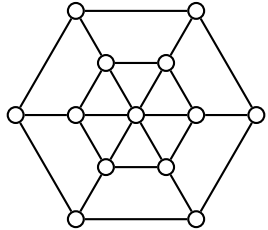
\begin{figure}[!h]
\begin{center}

 \begin{tikzpicture}[scale=.8]
\vertex (v0) at (0,0)
{};
\vertex (v1) at (.5,-.866)
 {};
\vertex (v2) at (1,0)
{};
\vertex (v3) at (.5,.866)
{};
\vertex (v4) at (-.5,.866)
{};
\vertex (v5) at (-1,0)
{};
\vertex (v6) at (-.5,-.866)
{};
\vertex (v7) at (1,-1.732)
{};
\vertex (v8) at (2,0) 
{};
\vertex (v9) at (1,1.732)
{};
\vertex (v10) at (-1,1.732)
{};
\vertex (v11) at (-2,0)
{};
\vertex (v12) at (-1,-1.732)
{};

\foreach \n in {1,2,3,4,5,6}{ \draw[thick] (v0) to (v\n); }
\foreach \n in {2,6,7}{ \draw[thick] (v1) to (v\n); }
\foreach \n in {2,4,9}{ \draw[thick] (v3) to (v\n); }
\foreach \n in {4,6,11}{ \draw[thick] (v5) to (v\n); }
\foreach \n in {2,7,9}{ \draw[thick] (v8) to (v\n); }
\foreach \n in {4,9,11}{ \draw[thick] (v10) to (v\n); }
\foreach \n in {6,7,11}{ \draw[thick] (v12) to (v\n); }
\end{tikzpicture}\caption{The generalized wheel $GW(6,2)$. \label{f:gw62}}
\vspace{-8pt}
\end{center}
\end{figure}
\end{ex}

From Theorem \ref{t:thza-not-throttle}, Theorem \ref{half}, and Observation \ref{o:basic-bds-up}, it follows that for any graph $G$, 
\[\thza(G)\geq {\frac n 2} \geq \gamma (G)\geq \thpda(G).\]

Moreover, there exist graphs $G$ such that $\thza(G)= {\frac n 2}= \gamma (G)= \thpda(G)$. For example, a straightforward verification shows $\thza(C_4)=2$, $\gamma (G)=2$ and  $\thpda(G)=2$. We now combine results from  previous sections to obtain the following characterization of graphs $G$ such that $\thza(G)= {\frac n 2}= \gamma (G)= \thpda(G)$.

\begin{prop}
Let $G$ be a  connected graph of order $n$ such that $\thpda(G)={\frac n 2}$. Then $\thza(G)= {\frac n 2}= \gamma (G)$.
\end{prop}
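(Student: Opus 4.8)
The plan is to combine the universal squeeze inequality with the structural characterization of graphs that maximize the domination number. First I would read $\gamma(G)$ off the chain $\thza(G)\ge \frac n2\ge \gamma(G)\ge \thpda(G)$ displayed immediately before the proposition (which follows from Theorem~\ref{t:thza-not-throttle}, Theorem~\ref{half}, and Observation~\ref{o:basic-bds-up}). Substituting the hypothesis $\thpda(G)=\frac n2$ forces the two middle inequalities to become equalities, so $\gamma(G)=\frac n2$; in particular $n$ is even. This already supplies the equality $\frac n2=\gamma(G)$ in the conclusion, and since the same chain (or Corollary~\ref{c:thza-lowbound}) gives $\thza(G)\ge\frac n2$, it remains only to prove the reverse bound $\thza(G)\le\frac n2$.

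Next I would exploit the structure forced by $\gamma(G)=\frac n2$. Since $G$ is connected of order $n\ge 2$, Theorem~\ref{dom-book-2} applies and yields either $G=H\circ K_1$ for some connected graph $H$, or $G=C_4$. In both cases I claim $G$ is a connected matched-sum graph. For $G=H\circ K_1$, take $G_1=H$, let $G_2$ be the edgeless graph on the set of pendant leaves, and let $M$ be the perfect matching joining each vertex of $H$ to its leaf; then $E(G)=E(H)\cup M=E(G_1)\cup E(G_2)\cup M$, so $G=G_1M^+G_2$. Here I rely on the convention stated explicitly in the excerpt that the summands of a matched-sum need not be connected, which is exactly what permits the edgeless $G_2$. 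For $G=C_4$ with cycle $a\text{-}b\text{-}c\text{-}d\text{-}a$, take $G_1$ on $\{a,d\}$ with edge $ad$, $G_2$ on $\{b,c\}$ with edge $bc$, and $M=\{ab,cd\}$, so that $G_1M^+G_2=C_4$.

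Finally I would invoke Theorem~\ref{t:thza-throttle=halfeven}: a connected matched-sum graph of even order $n$ satisfies $\thza(G)=\frac n2$. This supplies the missing upper bound, and hence $\thza(G)=\frac n2=\gamma(G)$, completing the argument. The step I expect to carry the real content is recognizing that both extremal families coming out of Theorem~\ref{dom-book-2} are matched-sum graphs; the rest is a direct chaining of previously established results. As an alternative route one could bypass Theorem~\ref{dom-book-2} by feeding $\thpda(G)=\frac n2$ into Theorem~\ref{half-ratio-iff}, which describes $G$ as $(H\circ K_1)\circ K_1$, $C_4\circ K_1$, or $C_4$---all coronas or $C_4$, hence again matched-sum graphs---but passing through $\gamma(G)=\frac n2$ is shorter, since that equality is needed for the conclusion in any case.
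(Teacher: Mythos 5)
Your proof is correct, but it reaches the structural characterization by a different door than the paper does. The paper feeds the hypothesis $\thpda(G)=\frac n 2$ directly into Theorem~\ref{half-ratio-iff}, obtaining $G=(H\circ K_1)\circ K_1$, $G=C_4\circ K_1$, or $G=C_4$; the first two cases are then treated uniformly as coronas $H_1\circ K_1$ and shown to be matched-sum graphs (with $G_1=H_1$, $G_2$ the induced --- edgeless --- graph on the leaves, and $M$ the pendant edges), exactly as in your construction, before invoking Theorem~\ref{t:thza-throttle=halfeven}. You instead first collapse the chain $\thza(G)\ge\frac n2\ge\gamma(G)\ge\thpda(G)$ to get $\gamma(G)=\frac n2$, and then apply the classical domination characterization, Theorem~\ref{dom-book-2}, to conclude $G=H\circ K_1$ or $G=C_4$. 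The two routes share the same skeleton (structure theorem, then matched-sum verification, then Theorem~\ref{t:thza-throttle=halfeven}), but yours uses a weaker and more classical structural input, which is economical since the equality $\gamma(G)=\frac n2$ is needed for the conclusion anyway; the paper's route instead leans on its own characterization of the extremal graphs for $\thpda$, at the cost of producing a more refined (double-corona) case list than the argument actually requires. Your corona and $C_4$ matched-sum decompositions are valid under the paper's conventions (summands need not be connected, and both of your matchings are perfect), so no gap remains; you even flag the paper's route as your alternative, correctly observing that it also lands on coronas or $C_4$.
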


\bpf
By Theorem \ref{half-ratio-iff},  $\thpda(G)=\frac n 2$ if and only if $G=(H\circ K_1)\circ K_1$ for some connected graph $H$,  $G=C_4\circ K_1$, or $G=C_4$.  
To conclude the proof we  show  that $G$ is a matched-sum graph, and apply Theorem \ref{t:thza-throttle=halfeven} to conclude $\thza(G)= {\frac n 2}$. If $G=C_4$, then $G=H_1M^+H_2$ where $H_1=K_2$, $H_2=K_2$ and $M$ is any matching between $V(H_1)$ and $V(H_2)$.  Now suppose $G=H_1\circ K_1$ for some connected graph $H_1$. Let $V(H_2)=V(G)\setminus V(H_1)$. For each $v\in V(H_2)$,  $v$ has a unique neighbor $u_v$ in $G$; note that $u_v\in V(H_1)$. Let $M=\{vu_v:v\in V(H_2)\}$. Then $G$ is a matched-sum graph, specifically,  $G=H_1M^+H_2$. 
\epf

\begin{cor} 
A connected graph $G$ satisfies $\thza(G)= {\frac n 2}= \gamma (G)= \thpda(G)$ if and only if $G=(H\circ K_1)\circ K_1$ for some connected graph $H$,  $G=C_4\circ K_1$, or $G=C_4$.
\end{cor}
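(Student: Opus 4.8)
The plan is to reduce the four-way equality to the single condition $\thpda(G)=\frac n2$ and then invoke Theorem~\ref{half-ratio-iff}. The starting point is the descending chain of inequalities established just above the corollary, namely $\thza(G)\ge \frac n2 \ge \gamma(G)\ge \thpda(G)$, which holds for every connected graph $G$ of order $n$. Because this is a chain running from $\thza(G)$ down to $\thpda(G)$ with $\frac n2$ and $\gamma(G)$ squeezed in between, all four quantities coincide if and only if the two extreme ones coincide; and whenever they do, every intermediate term is forced to equal $\frac n2$ as well.

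Next I would argue that the four-way equality is in fact equivalent to the lone statement $\thpda(G)=\frac n2$. One direction is immediate: if all four quantities equal $\frac n2$, then in particular $\thpda(G)=\frac n2$. For the converse I would apply the immediately preceding proposition, which states that $\thpda(G)=\frac n2$ already implies $\thza(G)=\frac n2=\gamma(G)$; combining this conclusion with the hypothesis $\thpda(G)=\frac n2$ yields all four equalities simultaneously.

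Finally, having shown that the four-way equality is equivalent to $\thpda(G)=\frac n2$, I would simply cite Theorem~\ref{half-ratio-iff}, which characterizes exactly this condition: $\thpda(G)=\frac n2$ holds if and only if $G=(H\circ K_1)\circ K_1$ for some connected graph $H$, or $G=C_4\circ K_1$, or $G=C_4$. Chaining these equivalences together gives the stated characterization.

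There is no genuine obstacle here; the corollary is a bookkeeping consequence of three earlier facts (the inequality chain, the preceding proposition, and Theorem~\ref{half-ratio-iff}). The only point requiring any care is the elementary logical observation that equality throughout a chain of inequalities is equivalent to equality of its endpoints, so that the ostensibly stronger four-way condition collapses to the single hypothesis $\thpda(G)=\frac n2$ already characterized by Theorem~\ref{half-ratio-iff}.
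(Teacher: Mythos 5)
Your proposal is correct and follows exactly the route the paper intends: the corollary is an immediate consequence of the inequality chain $\thza(G)\ge \frac n2 \ge \gamma(G)\ge \thpda(G)$, the preceding proposition (which supplies $\thza(G)=\frac n2=\gamma(G)$ from $\thpda(G)=\frac n2$, the one implication the chain alone cannot give), and Theorem~\ref{half-ratio-iff}. The paper states the corollary without a separate proof precisely because of this reduction, which you have spelled out accurately.
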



\section*{Acknowledgements}

The work of A. Trenk was partially supported by a grant from the Simons Foundation (\#426725).
The work of C. Mayer was partially supported by Sandia National Laboratories.  Sandia National Laboratories is a multimission laboratory managed and operated by National Technology \& Engineering Solutions of Sandia, LLC, a wholly owned subsidiary of Honeywell International Inc., for the U.S. Department of Energy's National Nuclear Security Administration under contract DE-NA0003525. This paper describes objective technical results and analysis. Any subjective views or opinions that might be expressed in the paper do not necessarily represent the views of the U.S. Department of Energy or the United States Government.


\bibliographystyle{amsalpha}

\end{document}